   \newcommand{\Cdb}{\mbox{$\mathbb{C}$}}
   \newcommand{\Ndb}{\mbox{$\mathbb{N}$}}
   \newcommand{\Rdb}{\mbox{$\mathbb{R}$}}
   \newcommand{\B}{\mbox{${\mathcal B}$}}
   \newcommand{\E}{\mbox{${\mathcal E}$}}
   \newcommand{\F}{\mbox{${\mathcal F}$}}
   \renewcommand{\H}{\mbox{${\mathcal H}$}}
   \newcommand{\I}{\mbox{${\mathcal I}$}}
   \newcommand{\J}{\mbox{${\mathcal J}$}}
   \newcommand{\K}{\mbox{${\mathcal K}$}}
   \newcommand{\Ll}{\mbox{${\mathcal L}$}}
   \renewcommand{\P}{\mbox{${\mathcal P}$}}
   \newcommand{\T}{\mbox{${\mathcal T}$}}
   \newcommand{\norm}[1]{\Vert#1\Vert}
   \newcommand{\bignorm}[1]{\bigl\Vert#1\bigr\Vert}
   \newcommand{\Bignorm}[1]{\Bigl\Vert#1\Bigr\Vert}
\newtheorem{theorem}{Theorem}
\newtheorem{lemma}[theorem]{Lemma}
\newtheorem{proposition}[theorem]{Proposition}
\newtheorem{corollary}[theorem]{Corollary}
\newtheorem{remark}[theorem]{Remark}
\theoremstyle{remark}
\newtheorem{example}[theorem]{Example}
\begin{document}

\title[When do triple operator integrals take value in the trace class?]
{When do triple operator integrals take value in the trace class?}

\author[C. Coine]{Clement Coine}
\email{clement.coine@univ-fcomte.fr}
\author[C. Le Merdy]{Christian Le Merdy}
\email{clemerdy@univ-fcomte.fr}
\author[F. Sukochev]{Fedor Sukochev}
\email{f.sukochev@unsw.edu.au}

\address{F.S. : School of Mathematics \& Statistics, University of NSW,
Kensington NSW 2052, AUSTRALIA}
\address{C.C., C.L. : 
Laboratoire de Mathématiques de Besan\c con, UMR 6623, CNRS, Universit\'e Bourgogne Franche-Comt\'e,
25030 Besan\c{c}on Cedex, FRANCE}

\date{\today}

\maketitle

\begin{abstract}
Consider three normal operators $A,B,C$ on separable Hilbert space $\H$ 
as well as scalar-valued spectral measures $\lambda_A$ on $\sigma(A)$,
$\lambda_B$ on $\sigma(B)$ and $\lambda_C$ on $\sigma(C)$. For any
$\phi\in L^\infty(\lambda_A\times \lambda_B\times \lambda_C)$ and any
$X,Y\in S^2(\H)$, the space of Hilbert-Schmidt operators on $\H$, we
provide a general definition of a triple operator integral 
$\Gamma^{A,B,C}(\phi)(X,Y)$ belonging to $S^2(\H)$ in such a way
that $\Gamma^{A,B,C}(\phi)$ belongs to the space $B_2(S^2(\H)\times
S^2(\H), S^2(\H))$ of bounded bilinear operators on $S^2(\H)$, and the
resulting mapping $\Gamma^{A,B,C}\colon L^\infty(\lambda_A\times \lambda_B\times \lambda_C)
\to B_2(S^2(\H)\times
S^2(\H), S^2(\H))$ is a $w^*$-continuous isometry. Then we show that a function
$\phi\in L^\infty(\lambda_A\times \lambda_B\times \lambda_C)$
has the property that $\Gamma^{A,B,C}(\phi)$ maps $S^2(\H)\times
S^2(\H)$ into $S^1(\H)$, the space of trace class operators on $\H$, if and only if it 
has the following factorization property: there exist a Hilbert space $H$ and
two functions $a\in L^{\infty}(\lambda_A \times \lambda_B ; H)$ and 
$b\in L^{\infty}(\lambda_B\times \lambda_C ; H)$
such that $\phi(t_1,t_2,t_3)= \left\langle a(t_1,t_2),b(t_2,t_3) \right\rangle$
for a.e. $(t_1,t_2,t_3) \in \sigma(A) \times \sigma(B) \times \sigma(C).$
This is a bilinear version of Peller's Theorem characterizing double operator 
integral mappings $S^1(\H)\to S^1(\H)$.
In passing we show that for any separable Banach spaces $E,F$,
any $w^*$-measurable esssentially bounded function 
valued in the Banach space $\Gamma_2(E,F^*)$ of operators from 
$E$ into $F^*$ factoring through Hilbert space admits a $w^*$-measurable
Hilbert space factorization.
\end{abstract}

\section{Introduction}
Let $\mathcal{H}$ be a separable Hilbert space. Let $S^2(\mathcal{H})$ 
denote the space of Hilbert-Schmidt operators on $\H$ and let $S^1(\mathcal{H})$
denote the space of trace class operators on $\H$. Let $A,B$ be two normal operators on $\H$.
Any bounded Borel function $\phi$ on $\sigma(A)\times\sigma(B)$ gives rise to a
double operator integral mapping $\Gamma^{A,B}(\phi)\colon S^2(\mathcal{H})\to S^2(\mathcal{H})$
formally defined as
$$
\Gamma^{A,B}(\phi)(X) =\int_{\sigma(A)\times\sigma(B)} \phi(s,t)\,
\text{d}E^{A}(s)\, X\, \text{d}E^{B}(t),\qquad
X\in S^2(\mathcal{H}),
$$
where $E^{A}$ and $E^B$ denote the spectral measures of $A$ and $B$, respectively.
Double operator integrals were initially defined by Daletskii and Krein \cite{DK} and then
dramatically developed in a series 
of papers of Birman-Solomiak \cite{BS1,BS2,BS3}. They play a prominent role 
in various aspects of operator theory, especially in the perturbation theory. 
We refer the reader to the survey papers \cite{BS4,Peller2015}  and
to the book \cite{ST} for a large volume of information on
this topic and its applications.

In \cite{Peller1985}, V.V. Peller gave a characterization of double operator integral mappings
which restrict to a bounded operator on $S^1(\H)$. He showed that $\Gamma^{A,B}(\phi)$ is a bounded operator
from $S^1(\H)$ into itself if and only there exist a Hilbert space $H$ and two functions
$a\in L^\infty(E^{A};H)$ and $b\in L^\infty(E^{B};H)$ such that
$$
\phi(s,t) = \langle a(s), b(t)\rangle \qquad a.e.\hbox{-}(s,t).
$$
This property means that the operator $L^1(E^{A})\to L^{\infty}(E^{B})$ with 
kernel $\phi$ factors through Hilbert space. 
We refer to \cite{Peller1985} and \cite{Hiai} for other equivalent formulations.

The purpose of this paper is to study an analogue 
of Peller's Theorem for triple operator 
integrals. This issue was motivated by a recent work 
of the authors together with D. Potapov and A. Tomskova
on perturbation theory  \cite{CLPST1}. In this paper 
the construction of triple operator 
integral mappings which do not map $S^2(\H)\times 
S^2(\H)$ into $S^1(\H)$ played a fundamental role;
see also \cite{CLPST2} and \cite{PSST} for related work.

The paper \cite{CLPST1} contains the following result on infinite matrices (see Theorems 1, 7 and
Corollary 8 in the latter paper). Let 
$M=\{m_{ikj}\}_{i,k,j\geq 1}$ be a three-dimensional matrix
with entries in $\mathbb C$. Let $(E_{ij})_{i,j\geq 1}$ denote the standard matrix units.
Then the bilinear Schur multiplier $B_M$ formally defined by
$$
B_M(X,Y):=\sum_{i,j,k\ge 1} m_{ikj}x_{ik}y_{kj}\,E_{ij}, \quad
X=\{x_{ij}\}_{i,j\ge 1},\ Y=\{y_{ij}\}_{i,j\ge 1},
$$
defines a bounded bilinear operator from
$S^2\times  S^2$ into $S^1$
if and only if there exist a Hilbert space $H$
and two bounded families
$(a_{ik})_{i,k\geq 1}$ and $(b_{jk})_{j,k\geq 1}$ in $H$ such that
$$
m_{ikj} = \langle a_{ik}, b_{jk}\rangle,
\qquad i,k,j\geq 1.
$$

Triple operator integral mappings can be regarded as (far reaching) extensions of 
bilinear Schur multipliers, hence the above result serves as a guide for our investigation.
In Section 3 we revisit an old construction of Pavlov \cite{Pav} providing 
a general definition of triple operator integral mappings 
$$
\Gamma^{A,B,C}(\phi) \colon S^2(\H)\times S^2(\H)\longrightarrow S^2(\H),
$$
where $A,B,C$ are normal operators on $\H$, $\lambda_A$, $\lambda_B$,
$\lambda_C$  are scalar valued spectral measures on the spectra $\sigma(A)$,
$\sigma(B)$, $\sigma(C)$, respectively,
and $\phi\in L^{\infty}(\lambda_A \times \lambda_B \times \lambda_C)$. 
We show in Theorem \ref{GammaABC} and Corollary \ref{Iso}
that $\Gamma^{A,B,C}$ is an isometry from 
$L^{\infty}(\lambda_A \times \lambda_B \times \lambda_C)$ into
$B_2(S^2(\H)\times S^2(\H), S^2(\H))$, the space of bounded bilinear
maps from $S^2(\H)\times S^2(\H)$ into $S^2(\H)$, and that $\Gamma^{A,B,C}$ 
is $w^*$-continuous (i.e. continuous in the $w^*$-topologies of the dual 
spaces $L^{\infty}(\lambda_A \times \lambda_B \times \lambda_C)$ and
$B_2(S^2(\H)\times S^2(\H), S^2(\H))$).

Our main result, established in Section 6 (see Theorem \ref{main}), asserts that
$$
\Gamma^{A,B,C}(\phi) : S^2(\mathcal{H}) \times S^2(\mathcal{H}) \longrightarrow S^1(\mathcal{H})
$$
if and only if there exist a Hilbert space $H$ and two functions
$$
a\in L^{\infty}(\lambda_A \times \lambda_B ; H) \qquad
\text{and} \qquad
b\in L^{\infty}(\lambda_B\times \lambda_C ; H)
$$
such that 
\begin{equation}\label{Factor}
\phi(t_1,t_2,t_3)= \left\langle a(t_1,t_2),b(t_2,t_3) \right\rangle,\qquad a.e.\hbox{-}(t_1,t_2,t_3).
\end{equation}
In Section \ref{add}, we recover Peller's Theorem as a special case of the above statement and we
compare our triple operator integrals with previous constructions.

Multiple operator integrals are a very active topic at the moment. 
 In addition to the already
mentioned papers  \cite{CLPST1, CLPST2}, we refer the reader to 
\cite{ANP1, ANP2, AP, ACDS, C, CLSS, LS, Peller2006, PSS-SSF} for important results,
as well as to \cite{ST} and the references therein.

The proof of Theorem \ref{main} combines several techniques and intermediate 
results which are discussed in Sections 2-5. First, the $w^*$-continuity
of $\Gamma^{A,B,C}$ plays a crucial role as it allows to reduce 
various computations to tensor product
manipulations. The relevant background on tensor products and duality is
provided in Section \ref{prel}. 
Second, in order to study the 
factorization property (\ref{Factor}), which is about functions 
only, we need to develop triple operator
integrals associated with functions, in parallel with the construction
of $\Gamma^{A,B,C}$. This is achieved in Subsection \ref{Functions}. The 
link between the two constructions, which is fundamental for our purpose, 
is given in Subsection \ref{Op-to-Fu} (see Proposition \ref{Connection}).
Third, $w^*$-measurable versions of vector-valued 
$L^p$-spaces and Hilbert space factorizations appear naturally in our investigation.
Sections \ref{Lp} and \ref{factorization} are devoted
to these two topics. Our main result, of independent interest, is the following. 
Let $E,F$ be separable Banach spaces
and let $(\Omega,\mu)$ be a separable measure space. Let 
$\Gamma_2(E, F^*)$ be the space of all bounded linear operators $E \rightarrow F^*$ 
which factor through Hilbert space. This is a dual space (see (\ref{dualGamma2})). 
We show that if $\phi\colon\Omega\to 
\Gamma_2(E, F^*)$ is a $w^*$-measurable essentially bounded function, then there exist
a separable Hilbert space $H$ and two $w^*$-measurable essentially bounded functions
$\alpha\colon\Omega\to B(E,H)$ and $\beta\colon\Omega\to B(F,H)$ such that
$$
\bigl\langle\bigl[\phi(t)\bigr] (x),y\bigr\rangle\,=\, \bigl\langle
\bigl[\alpha(t)\bigr] (x), \bigl[\beta(t)\bigr] (y)\bigr\rangle
$$
almost everywhere, for any $x\in E$ and $y\in F$.

\bigskip
We end this Introduction with a few notations and conventions.
Throughout the paper we will use the notation
$\norm{\ }_p$ for the norms on various $L^p$-spaces, which may
be either classical ones or vector valued ones. The notations
$\norm{\ }_1$ and $\norm{\ }_2$ will also be used on the
spaces of trace class operators and Hilbert-Schmidt operators, 
respectively (see Subsection \ref{Hilbert}).

Whenever $\Sigma$ is a set and 
$V\subset\Sigma$ is a subset we let $\chi_V\colon\Sigma\to \{0,1\}$ 
denote the characteristic function of $V$.

The Hilbertian direct sum of any family $(\H_i)_{i\in I}$ of Hilbert spaces
will be denoted by
$$
\overset{2}{\oplus}_{i\in I} \H_i.
$$
Likewise, the notation $\H\overset{2}{\oplus}\K$ will stand for the 
Hilbertian direct sum of any two Hilbert spaces $\H$ and $\K$.

Whenever $E,F$ are two Banach spaces, a bounded linear map 
$u\colon E^*\to F^*$ will be called $w^*$-continuous when 
it is continuous with respect to the $w^*$-topologies of 
$E^*$ and $F^*$. This is equivalent to the fact that $u$ is the adjoint 
of a bounded linear map from $F$ into $E$.
 We recall that when a $w^*$-continuous map $u\colon E^*\to F^*$ is an isometry, then
its range is $w^*$-closed, and $u$ induces a $w^*$-homeomorphism between
$E^*$ and its range. The latter is therefore a dual space
and $u\colon E^*\to u(E^*)$ is an isometric $w^*$-homeomorphic identification
between the dual spaces $E^*$ and $u(E^*)$.

\vskip 1cm

\section{Preliminaries and background}\label{prel}

\subsection{Normal operators and scalar-valued spectral measures}\label{SV}

We assume that the reader is familiar with the general spectral theory of normal operators on
Hilbert space, for which we refer e.g. to \cite[Chapters 12 and 13]{Rudin} and \cite[Sections 14 and 15]{Conway}.
Let $\mathcal{H}$ be a separable Hilbert space and
let $A$ be a (possibly unbounded) normal operator on $\mathcal H$. We let $\sigma(A)$ denote the spectrum of $A$ and 
we let $E^A$ denote the spectral measure of $A$, defined on the Borel subsets of $\sigma(A)$.

By definition a scalar-valued spectral measure for $A$ is a positive finite measure $\lambda_A$
on the Borel subsets of $\sigma(A)$, such that $\lambda_A$ and $E^A$ have the same sets of measure zero. 
Such measures exist, thanks to the separability assumption on $\mathcal{H}$. Indeed let
$$
W^*(A)\subset B(\mathcal{H})
$$
be the von Neumann algebra generated by the range of $E^A$. By \cite[Corollary 14.6]{Conway},
$W^*(A)$ has a separating vector $e$. It follows that  
$$
\lambda_A := \|E^A(.)e\|^2
$$
is a scalar-valued spectral measure for $A$. (This construction is given
in \cite[Section 15]{Conway} for a bounded $A$.)

The Borel functional calculus for $A$ takes any bounded Borel   
function $f\colon \sigma(A)\to\Cdb$ to the bounded operator
$$
f(A):=\int_{\sigma(A)} f(t) \ \text{d}E^A(t)\,.
$$
According to \cite[Theorem 15.10]{Conway}, it induces a $w^*$-continuous (=normal)
$*$-representation
\begin{equation}\label{piA}
\pi_A \colon L^{\infty}(\lambda_A) \longrightarrow B(\mathcal{H}),
\end{equation}
As a matter of fact, the space $L^{\infty}(\lambda_A)$ does not depend on the choice 
of the scalar-valued spectral measure $\lambda_A$.  Without ambiguity,
we may write 
$f(A)=\pi_A(f)$ for any $f\in L^{\infty}(\lambda_A)$.

\subsection{Tensor products and duality}\label{tensorproducts}

We give a brief summary of tensor product formulas to be used in the sequel.
Let $E$, $F$ and $G$ be Banach spaces. We let $B(E,G)$ be the Banach space of 
all bounded linear operators
from $E$ into $G$. Then we let $B_2(E\times F,G)$ be the Banach space of 
all bounded bilinear operators
$T\colon E\times F\to G$, equipped with
$$
\norm{T}=\sup\bigl\{\norm{T(x,y)}\, :\, x\in E,\, y\in F,\, \norm{x}\leq 1,\, \norm{y}\leq 1\bigr\}.
$$

If $z\in E \otimes F$, the projective tensor norm of $z$ is defined by
$$
\|z\|_{\wedge} := \inf \left\lbrace \sum \|x_i\| \|y_i\| \right\rbrace,
$$
where the infimum runs over all finite families $(x_i)_i$ in $E$ and $(y_i)_i$ in $F$ such 
that 
$$
z=\sum_i x_i\otimes y_i. 
$$
The completion $E \overset{\wedge}{\otimes} F$ of $(E\otimes F,\norm{\ }_{\wedge})$
is called the projective tensor product of $E$ and $F$.

To any $T\in B_2(E\times F, G)$, one can associate 
a linear map $\widetilde{T}\colon E\otimes F\to G$
by the formula
$$
\widetilde{T}(x\otimes y)=T(x,y),\qquad  x\in E,\, y\in F.
$$
Then $\widetilde{T}$ is bounded on $(E\otimes F,\norm{\ }_{\wedge})$, 
with $\norm{\widetilde{T}}=\norm{T}$, and 
hence the mapping $T\mapsto
\widetilde{T}$ gives rise to an isometric identification
\begin{equation}\label{biliproj}
B_2(E\times F, G) = B(E \overset{\wedge}{\otimes} F,G).
\end{equation}

In the case $G=\Cdb$, this implies that the mapping taking any 
functional $\omega\colon E\otimes F\to\Cdb$
to the operator $u\colon E\to F^*$ defined by $\langle u(x),
y\rangle=\omega(x\otimes y)$ for any $x\in E, y\in F$,
induces an isometric identification
\begin{equation}\label{dualproj}
(E \overset{\wedge}{\otimes} F)^*=B(E,F^*).
\end{equation}
We refer to \cite[Chapter 8, Theorem 1 $\&$ Corollary 2]{Diestel} for these classical facts.

Let $(\Omega,\mu)$ be a $\sigma$-finite measure space and let $L^1(\Omega;F)$ denote the
Bochner space of integrable functions from $\Omega$ into $F$.
By \cite[Chapter 8, Example 10]{Diestel}, the natural embedding $L^1(\Omega)
\otimes F\subset L^1(\Omega;F)$
extends to an isometric isomorphism
\begin{equation}\label{L1tensor}
L^1(\Omega;F) = L^1(\Omega)\overset{\wedge}{\otimes}F.
\end{equation}
By (\ref{dualproj}), this implies
\begin{equation}\label{L1tensorcor}
L^1(\Omega;F)^*=B(L^1(\Omega),F^*).
\end{equation}

Let $E,W$ be Banach spaces.
We say that an operator $u \colon 
E \rightarrow W$ factors through a Hilbert space if there exist a Hilbert space $H$ and 
two operators $\alpha: E \rightarrow H$ and $\beta : H \rightarrow W$ such that $u=\beta\alpha$.
We denote by $\Gamma_2 (E,W)$ the space of all such operators. For any $u \in \Gamma_2 (E,W)$, define
$$
\gamma_2(u)=\inf\bigl\{\norm{\alpha}\norm{\beta}\bigr\},
$$
where the infimum runs over all factorizations of $u$ as above.
Then $\gamma_2$ is a norm on $\Gamma_{2}(E,W)$ and the latter is a 
Banach space, see e.g. \cite{DiestelJar}
or \cite[Chapter 2]{PisierCBMS}.

We will make crucial use of the fact that if $W$ is a dual space, 
then $\Gamma_2 (E,W)$ is a dual space as well. 
Indeed assume that $W=F^*$ for some 
Banach space $F$. Then there exists a norm 
$\gamma_2^*\leq \norm{\ }_\wedge$ on $E\otimes F$
such that if we let $E\hat{\otimes}_{\gamma_2^*} F$ 
denote the completion of
$(E\otimes F,\gamma_2^*)$, then (\ref{dualproj}) induces an isometric identification
\begin{equation}\label{dualGamma2}
(E\hat{\otimes}_{\gamma_2^*} F)^* = \Gamma_{2}(E,F^*).
\end{equation}
See e.g. \cite[Theorem 5.3]{PisierBook} for a definition 
of $\gamma_2^*$ (that we will not use here) and a proof.
By construction, the canonical embedding
$\Gamma_{2}(E,F^*)\to B(E,F^*)$ is $w^*$-continuous.

\subsection{Operators on Hilbert spaces and trace duality}\label{Hilbert}

Let $\H,\K$ be Hilbert spaces and let ${\rm tr}$ be the trace on $B(\mathcal{K})$.
We let $S^1(\mathcal{K},\mathcal{H})$ denote the space of trace class operators $T\colon \mathcal{K}\to 
\mathcal{H}$, equipped with $\norm{T}_1={\rm tr}(\vert T\vert)$, where $\vert T\vert =(T^*T)^{\frac12}$. 
We recall that the pairing
$$
\langle S,T\rangle ={\rm tr}(ST),\qquad T\in S^1(\mathcal{K},\mathcal{H}),\ 
S\in B(\mathcal{H},\mathcal{K}),
$$
induces an isometric identification
\begin{equation}\label{Dual-S1}
B(\mathcal{H},\mathcal{K}) =  S^1(\mathcal{K},\mathcal{H})^*.
\end{equation}
Let $S^2(\mathcal{K},\mathcal{H})$ denote the space of Hilbert-Schmidt operators 
$T\colon \mathcal{K}\to 
\mathcal{H}$, equipped with $\norm{T}_2=\bigl({\rm tr}(\vert T\vert^2)\bigr)^{\frac12}$.
Then the above duality pairing also yields an isometric identification
\begin{equation}\label{dualS2}
S^2(\mathcal{H},\mathcal{K}) =  S^2(\mathcal{K},\mathcal{H})^*.
\end{equation}

Given any two Banach spaces $E,G$, it is customary to identify 
$E^*\otimes G$ with the space of bounded finite rank operators from
$E$ into $G$. Indeed for any $x^*\in E^*$ and $g\in G$, 
$x^*\otimes g$ is identified with the element of $B(E,G)$
taking any $x\in E$ to $x^*(x)g$. 
We apply this principle to Hilbert spaces. We let $\overline{\K}$
denote the complex conjugate of $\K$ and recall 
the canonical identification $\K^*=\overline{\K}$. Then we regard
$\overline{\mathcal{K}}\otimes \mathcal{H}$
as the space of finite rank operators
from $\mathcal{K}$ into $\mathcal{H}$.
In this identification, for any $\eta\in\mathcal{K}$ and $\xi\in\mathcal{H}$, 
$\overline{\eta}\otimes \xi\colon \mathcal{K}
\to \mathcal{H}$ denotes the operator taking any $z\in 
\mathcal{K}$ to $\langle z,\eta\rangle \xi$.

We recall that $\overline{\mathcal{K}}\otimes \mathcal{H}$ is both a
dense subspace of $S^1(\mathcal{K},\mathcal{H})$ and $S^2(\mathcal{K},\mathcal{H})$.

\subsection{Measurable Schur multipliers}\label{Schurmulti}

Let $(\Omega_1, \mu_1)$ and $(\Omega_2, \mu_2)$ be two $\sigma$-finite measure spaces. 
If $J\in L^2(\Omega_1 \times \Omega_2)$, the operator
\begin{equation*}
\begin{array}[t]{lccc}
X_J : & L^2(\Omega_1) & \longrightarrow & L^2(\Omega_2) \\
& r & \longmapsto & \displaystyle \int_{\Omega_1} J(t,\cdotp)r(t)\, \text{d}\mu_1(t)  \end{array}
\end{equation*}
is a Hilbert-Schmidt operator and $\|X_J\|_2=\|J\|_2$. Further
any element of $S^2(L^2(\Omega_1), L^2(\Omega_2))$ has this form (see e.g.
\cite[Thm VI. 23]{RS}). We summarize
these facts by writing an isometric identification
\begin{equation}\label{S2=L2}
L^2(\Omega_1 \times \Omega_2) = S^2(L^2(\Omega_1), L^2(\Omega_2)).
\end{equation}

Let $\psi\in L^{\infty}(\Omega_1\times \Omega_2)$. Thanks to the above identity,
we may associate the operator
\begin{equation*}
\begin{array}[t]{lccc}
R_{\psi} : & S^2(L^2(\Omega_1), L^2(\Omega_2)) & \longrightarrow & S^2(L^2(\Omega_1), L^2(\Omega_2)) \\
& X_J & \longmapsto & X_{\psi J}  \end{array}
\end{equation*}
whose norm is equal to $\norm{\psi}_\infty$.
We say that $\psi$ is a measurable Schur multiplier if $R_{\psi}$ extends to a bounded operator
(still denoted by)
$$
R_{\psi} \colon \mathcal{K}(L^2(\Omega_1), L^2(\Omega_2)) 
\longrightarrow B(L^2(\Omega_1), L^2(\Omega_2)),
$$
where $\mathcal{K}(L^2(\Omega_1), L^2(\Omega_2))$ denotes the space of compact operators from
$L^2(\Omega_1)$ into $L^2(\Omega_2)$. The density of Hilbert-Schmidt operators in compact operators ensures 
that this extension is necessarily unique.

For any $\psi\in L^{\infty}(\Omega_1\times \Omega_2)$, one may define 
$u_{\psi} \in B(L^1(\Omega_1), L^{\infty}(\Omega_2))$ by
$$
u_{\psi}(r)=\int_{\Omega_1} \psi(t,\cdotp)r(t) \,\text{d}\mu_1(t),\qquad r\in L^1(\Omega_1).
$$
Applying (\ref{L1tensorcor}) with $F=L^1(\Omega_2)$ together with the identity 
$L^1(\Omega_1;L^1(\Omega_2))=L^1(\Omega_1\times\Omega_2)$, we obtain an  
isometric $w^*$-homeomorphic identification
\begin{equation}\label{IntForm}
L^{\infty}(\Omega_1\times \Omega_2) = B(L^1(\Omega_1), L^{\infty}(\Omega_2)).
\end{equation}
A thorough look at this identification reveals that it is given by the mapping
$\psi\mapsto u_\psi$. Thus we have 
$$
\norm{u_\psi}=\norm{\psi}_\infty
$$
and any element of $B(L^1(\Omega_1), L^{\infty}(\Omega_2))$ 
is an operator $u_\psi$ for some (unique) $\psi$.

The first part of Theorem \ref{Pellerthm} below
is a remarkable characterization of measurable Schur multipliers. In the 
discrete case it was stated by Pisier in \cite[Theorem 5.1]{PisierBook} who refers himself 
to some earlier work of Grothendieck. 
For the general case considered here we refer to Haagerup
\cite{Ha} and Spronk \cite[Section 3.2]{Spronk}.
Peller's characterization of double operator integral mappings which restrict
to a bounded operator $S^1(\H)\to S^1(H)$ is
closely related to this factorization result. Indeed, 
Theorem \ref{Pellerthm} (1) below is implicit in \cite{Peller1985}.

For the second part of the next result, recall 
that by (\ref{dualGamma2}) and 
(\ref{dualproj}), 
$$
\Gamma_2 (L^1 (\Omega_1), L^{\infty}(\Omega_2))
\qquad\hbox{and}\qquad
B\bigl(\mathcal{K}(L^2(\Omega_1), 
L^2(\Omega_2)), B(L^2(\Omega_1), L^2(\Omega_2))\bigr)
$$
are both dual spaces.

\begin{theorem}\label{Pellerthm} \

\begin{itemize}
\item [(1)]  \cite{Ha, Peller1985, PisierBook, Spronk} A function $\psi\in L^{\infty}(\Omega_1\times \Omega_2)$
is a measurable Schur multiplier if and only if the operator 
$u_{\psi}$ belongs to $\Gamma_2 (L^1 (\Omega_1), L^{\infty}(\Omega_2))$, and we have
$$
\gamma_2(u_{\psi})=\| R_{\psi} \|
$$
in this case.
\item [(2)]
Moreover the isometric embedding 
$$
\Gamma_2 (L^1 (\Omega_1), L^{\infty}(\Omega_2))\hookrightarrow
B\bigl(\mathcal{K}(L^2(\Omega_1), L^2(\Omega_2)), B(L^2(\Omega_1), L^2(\Omega_2))\bigr)
$$
taking any $u_{\psi}\in \Gamma_2 (L^1 (\Omega_1), L^{\infty}(\Omega_2))$ to $R_\psi$
is $w^*$-continuous.
\end{itemize}
\end{theorem}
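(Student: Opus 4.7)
The plan is to split the statement into parts (1) and (2), citing the literature for the deep direction of (1) and focusing the argument on the $w^*$-continuity claim in (2).

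For (1), one direction is constructive. Starting from a factorization of $u_\psi$ through a Hilbert space $H$, I would represent the factors as vector-valued integral kernels to obtain $a\in L^\infty(\Omega_1;H)$ and $b\in L^\infty(\Omega_2;H)$ with $\psi(s,t)=\langle a(s),b(t)\rangle$ a.e.\ and $\gamma_2(u_\psi)=\|a\|_\infty\|b\|_\infty$. Introducing the multipliers $M_a\colon L^2(\Omega_1)\to L^2(\Omega_1;H)$, $(M_a\xi)(s)=\xi(s)a(s)$, and similarly $M_b$, one checks via (\ref{S2=L2}) that
$$
R_\psi(X)=M_b^{*}\circ(X\otimes I_H)\circ M_a
$$
yields a bounded extension of $R_\psi$ to $B(L^2(\Omega_1),L^2(\Omega_2))$ of norm at most $\|a\|_\infty\|b\|_\infty$, so $\|R_\psi\|\leq\gamma_2(u_\psi)$. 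The converse inequality is the heart of (1); it amounts to Grothendieck's fundamental theorem in its sharp factorization form, and we would invoke it from \cite{Ha,PisierBook,Spronk}.

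For (2), the strategy is to construct an explicit predual map and pass to adjoints. By (\ref{dualGamma2}) the domain is the dual of $L^1(\Omega_1)\hat{\otimes}_{\gamma_2^*}L^1(\Omega_2)$, and by (\ref{dualproj}) together with (\ref{Dual-S1}) the codomain is the dual of $\mathcal{K}(L^2(\Omega_1),L^2(\Omega_2))\overset{\wedge}{\otimes}S^1(L^2(\Omega_2),L^2(\Omega_1))$ under the pairing $\langle R,X\otimes T\rangle=\mathrm{tr}(R(X)T)$. On rank-one simple tensors I would define
$$
\Phi\bigl((\overline{\mu}\otimes\nu)\otimes(\overline{\eta}\otimes\xi)\bigr):=\bigl(\overline{\mu(\cdot)}\xi(\cdot)\bigr)\otimes\bigl(\nu(\cdot)\overline{\eta(\cdot)}\bigr)\in L^1(\Omega_1)\otimes L^1(\Omega_2),
$$
for $\mu,\xi\in L^2(\Omega_1)$ and $\nu,\eta\in L^2(\Omega_2)$, then compose with the canonical contraction $L^1(\Omega_1)\overset{\wedge}{\otimes}L^1(\Omega_2)\to L^1(\Omega_1)\hat{\otimes}_{\gamma_2^*}L^1(\Omega_2)$. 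Since the operator, trace class, and Hilbert--Schmidt norms all coincide with the product of the $L^2$-norms of the factors on rank-one operators, Cauchy--Schwarz gives $\|\Phi(X\otimes T)\|_\wedge\leq\|X\|_{\mathrm{op}}\|T\|_{S^1}$ on simple tensors, and $\Phi$ extends by linearity and density to a contraction on the full projective tensor product.

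A direct pairing computation then shows
$$
\langle u_\psi,\Phi(X\otimes T)\rangle=\iint\psi(s,t)\,\overline{\mu(s)}\xi(s)\,\nu(t)\overline{\eta(t)}\,ds\,dt=\mathrm{tr}\bigl(R_\psi(X)T\bigr)=\langle R_\psi,X\otimes T\rangle
$$
on rank-one tensors, hence on all of $\mathcal{K}(L^2(\Omega_1),L^2(\Omega_2))\overset{\wedge}{\otimes}S^1(L^2(\Omega_2),L^2(\Omega_1))$ by density. Consequently $\Phi^{*}$ restricted to $\Gamma_2(L^1(\Omega_1),L^\infty(\Omega_2))$ is precisely the embedding $u_\psi\mapsto R_\psi$, which is therefore $w^*$-continuous. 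The main obstacle is identifying the correct predual map $\Phi$ and verifying the rank-one norm inequality; once these are in hand, extension by density and dualization make the $w^*$-continuity claim automatic.
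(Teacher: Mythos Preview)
Your treatment of (1) is fine. For (2), the predual-map strategy is a reasonable alternative to the paper's argument, but there is a genuine gap in the step where you claim that $\Phi$ extends to a contraction on $\mathcal{K}\overset{\wedge}{\otimes}S^1$. Verifying $\|\Phi(X\otimes T)\|_\wedge\leq\|X\|_{\mathrm{op}}\|T\|_{S^1}$ only for \emph{rank-one} $X$ and $T$ does not give boundedness of the bilinear map $(X,T)\mapsto\Phi(X,T)$: a general compact $X$ decomposes as $\sum_i\tau_i\,\overline{\mu_i}\otimes\nu_i$ with $\|X\|_{\mathrm{op}}=\sup_i\tau_i$, and summing your rank-one bounds only controls $\|\Phi(X,T)\|_\wedge$ by $\|X\|_{S^1}\|T\|_{S^1}$, not by $\|X\|_{\mathrm{op}}\|T\|_{S^1}$. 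In fact, since $\|\cdot\|_\wedge$ on $L^1(\Omega_1)\overset{\wedge}{\otimes}L^1(\Omega_2)=L^1(\Omega_1\times\Omega_2)$ is just the $L^1$-norm and $\Phi(X\otimes T)$ is the product of the two kernels, the best one gets directly is $\|X\|_{S^2}\|T\|_{S^2}$; the target norm $\|\cdot\|_\wedge$ is too strong for the bound you need.

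The gap is easily repaired using part~(1). For finite-rank $X,T$ your pairing identity gives, by Hahn--Banach duality in $L^1\hat{\otimes}_{\gamma_2^*}L^1$,
\[
\gamma_2^*\bigl(\Phi(X\otimes T)\bigr)=\sup_{\gamma_2(u_\psi)\leq 1}\bigl|\mathrm{tr}(R_\psi(X)T)\bigr|\leq\sup_{\gamma_2(u_\psi)\leq 1}\|R_\psi\|\,\|X\|_{\mathrm{op}}\|T\|_{S^1}=\|X\|_{\mathrm{op}}\|T\|_{S^1},
\]
the last equality being exactly $\|R_\psi\|=\gamma_2(u_\psi)$ from (1); now density does yield the contraction into $L^1\hat{\otimes}_{\gamma_2^*}L^1$, and your adjoint argument goes through. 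The paper takes a different but closely related route: it invokes (implicitly via Krein--Smulian) that a bounded linear map between duals is $w^*$-continuous once it is so on bounded sets, fixes a bounded net $u_{\psi_\iota}\to u_\psi$, passes through (\ref{IntForm}) to get $\psi_\iota\to\psi$ weak-$*$ in $L^\infty(\Omega_1\times\Omega_2)$, and then checks $R_{\psi_\iota}\to R_\psi$ on rank-one operators, upgrading by boundedness and density. Both arguments ultimately rest on the rank-one computation you wrote down, but each needs the isometry from (1) (or at least boundedness) to close.
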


\begin{proof}
Let us prove (2).
Let $\psi\in L^{\infty}(\Omega_1\times \Omega_2)$ and 
let $(\psi_\iota)_\iota$ be a net of $L^{\infty}(\Omega_1\times \Omega_2)$ such that $u_\psi$ and 
the operators $u_{\psi_\iota}$ belong to $\Gamma_2 (L^1 (\Omega_1), L^{\infty}(\Omega_2))$ for any $\iota$,
$(u_{\psi_\iota})_\iota$ is a bounded net in the latter space,
and $u_{\psi_\iota}\to u_\psi$ in the $w^*$-topology of 
$\Gamma_2 (L^1 (\Omega_1), L^{\infty}(\Omega_2))$.
This implies that $u_{\psi_\iota}\to u_\psi$ in the $w^*$-topology of $B(L^1 (\Omega_1), 
L^{\infty}(\Omega_2))$ (see the comments following (\ref{dualGamma2})).
According to (\ref{IntForm}), this means that $\psi_\iota\to \psi$
in the $w^*$-topology of $L^\infty(\Omega_1\times\Omega_2)$.

Let $\xi,\xi'\in L^2(\Omega_1)$ and $\eta,\eta'\in L^2(\Omega_2)$.
For any $\iota$, $R_{\psi_\iota}(\overline{\xi}\otimes \eta)$ 
is the Hilbert-Schmidt operator associated to the 
$L^2$-function $\psi_\iota(\overline{\xi}\otimes \eta)$, hence
$$
\bigl\langle
\bigr[R_{\psi_\iota}(\overline{\xi}\otimes \eta)\bigr](\xi'),\eta'\bigr\rangle
=\int_{\Omega_1\times\Omega_2} \psi_\iota(t_1,t_2) \overline{\xi(t_1)} \xi'(t_1) 
\eta(t_2)\overline{\eta'(t_2)}
\,\text{d}\mu_1(t_1)\text{d}\mu_2(t_2)\,.
$$
The right-hand side of this equality is the action of $\psi_\iota\in L^\infty(\Omega_1\times\Omega_2)$
on the $L^1$-function 
$$
(t_1,t_2)\mapsto \overline{\xi(t_1)} \xi'(t_1) \eta(t_2)\overline{\eta'(t_2)}.
$$
Since $\psi=w^*$-$\lim_\iota\psi_\iota$, this implies that 
$$
\bigl\langle
\bigr[R_{\psi_\iota}(\overline{\xi}\otimes \eta)\bigr](\xi'),\eta'\bigr\rangle
\longrightarrow \bigl\langle
\bigr[R_{\psi}(\overline{\xi}\otimes \eta)\bigr](\xi'),\eta'\bigr\rangle.
$$
By linearity, this implies that for any finite rank operator $\sigma\colon 
L^2(\Omega_1)\to L^2(\Omega_2)$, 
$R_{\psi_\iota}(\sigma)\to R_{\psi}(\sigma)$ is the weak operator topology of 
$B(L^2(\Omega_1), L^2(\Omega_2))$. 
Since $(u_{\psi_\iota})_\iota$ is a bounded net, $(R_{\psi_\iota})_\iota$ is bounded as well.
By the density of finite rank operators 
in $\mathcal{K}(L^2(\Omega_1), L^2(\Omega_2))$, we deduce that 
for any $\sigma$ in the latter space,
$R_{\psi_\iota}(\sigma)\to R_{\psi}(\sigma)$ is the weak operator topology  of 
$B(L^2(\Omega_1), L^2(\Omega_2))$. 
Using again the boundedness of $(R_{\psi_\iota})_\iota$, we deduce that 
$R_{\psi_\iota}(\sigma)\to R_{\psi}(\sigma)$ in the $w^*$-topology of 
$B(L^2(\Omega_1), L^2(\Omega_2)\bigr)$ for any $\sigma\in
\mathcal{K}(L^2(\Omega_1), L^2(\Omega_2))$ and finally
that $R_{\psi_\iota}\to R_{\psi}$ in the 
$w^*$-topology of $B\bigl(\mathcal{K}(L^2(\Omega_1), L^2(\Omega_2)), 
B(L^2(\Omega_1), L^2(\Omega_2)\bigr)$.
\end{proof}

\vskip 1cm

\section{Triple operator integral mappings}\label{triple}

Multiple operator integrals appeared in many recent papers with 
various definitions, see in particular \cite{ANP1, ANP2, AP, ACDS, Peller2006, 
PSS-SSF}. In this section we provide a definition of triple operator
integrals associated to a triple $(A,B,C)$ of normal operators on $\H$,
based on the construction of a natural
$w^*$-continuous mapping from 
$L^\infty(\lambda_A\times\lambda_B\times\lambda_C)$ into 
$B_2(S^2(\H)\times S^2(\H),S^2(\H))$,
see Theorem \ref{GammaABC}. We will show in Corollary \ref{Iso} that this 
mapping is actually an isometry.
Further the construction extends to multiple operator integrals, see Proposition
\ref{nintegrals}. It turns out that this construction is equivalent to an
old definition of multiple operator integrals due to Pavlov \cite{Pav};
this will be explained in Remark \ref{Rem-Pavlov}. 

In Subsection \ref{Functions}, we give an analogue of the construction
for functions, in the spirit of Subsection \ref{Schurmulti}. Finally in
Subsection \ref{Op-to-Fu}, we establish a fruitful connection between 
triple operator integrals associated with operators and 
triple operator integrals associated with functions.

\subsection{Triple operator integrals associated with operators}\label{Operators}

Let $\mathcal{H}$ be a separable Hilbert space and let $A,B,C$ be 
(possibly unbounded) normal operators on $\mathcal{H}$. 
Denote by $E^A, E^B$ and $E^C$ their spectral measures and 
let $\lambda_A, \lambda_B$ and $\lambda_C$ be scalar-valued 
spectral measures for $A$, $B$ and $C$ (see Subsection \ref{SV}).

Let $\mathcal{E}_1 \subset L^{\infty}(\lambda_A)$, 
$\mathcal{E}_2 \subset L^{\infty}(\lambda_B)$ and $\mathcal{E}_3 \subset L^{\infty}(\lambda_C)$ be  
the spaces of simple functions on $(\sigma(A),\lambda_A)$, $(\sigma(B),\lambda_B)$
and $(\sigma(C),\lambda_C)$, respectively. We let  
$$
\Gamma \colon \mathcal{E}_1 \otimes \mathcal{E}_2 \otimes \mathcal{E}_3 
\longrightarrow B_2(S^2(\mathcal{H}) \times S^2(\mathcal{H}),
S^2(\mathcal{H}))
$$
be the unique linear map such that
\begin{equation}\label{Def-G}
\Gamma(f_1\otimes f_2\otimes f_3)(X,Y)=f_1(A)Xf_2(B)Yf_3(C)
\end{equation}
for any $f_1\in \mathcal{E}_1$, $f_2\in \mathcal{E}_2$ and $f_3\in \mathcal{E}_3$, and 
for any $X,Y \in S^2(\mathcal{H})$.

\begin{lemma}\label{Gamma}
For all $\phi \in \mathcal{E}_1 \otimes \mathcal{E}_2 \otimes \mathcal{E}_3$, and for all 
$X,Y \in S^2(\mathcal{H})$, we have
$$
\norm{\Gamma(\phi)(X,Y)}_2\leq\norm{\phi}_\infty\norm{X}_2\norm{Y}_2.
$$
\end{lemma}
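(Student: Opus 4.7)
Plan. The strategy is to use a common refinement to reduce to a finite sum over rectangular pieces, group the resulting operator sum by outer indices so that orthogonality in $S^2(\H)$ decouples the sum, and then control the remaining inner sums by a single trace inequality.

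Since $\phi\in\mathcal{E}_1\otimes\mathcal{E}_2\otimes\mathcal{E}_3$ is a finite linear combination of elementary tensors of simple functions, I first take common refinements to obtain finite measurable partitions $\{\alpha_i\}$ of $\sigma(A)$, $\{\beta_j\}$ of $\sigma(B)$, $\{\gamma_k\}$ of $\sigma(C)$, and scalars $c_{ijk}\in\Cdb$ with
\[
\phi \,=\, \sum_{i,j,k} c_{ijk}\,\chi_{\alpha_i}\otimes\chi_{\beta_j}\otimes\chi_{\gamma_k},
\qquad
\norm{\phi}_\infty \,=\, \max_{i,j,k}\vert c_{ijk}\vert
\]
(after dropping null pieces, for which the corresponding spectral projections vanish). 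Writing $P_i=E^A(\alpha_i)$, $Q_j=E^B(\beta_j)$, $R_k=E^C(\gamma_k)$, formula (\ref{Def-G}) and linearity then give
\[
\Gamma(\phi)(X,Y) \,=\, \sum_{i,k} P_i X\,D_{i,k}\,Y R_k,
\qquad D_{i,k}\,:=\,\sum_j c_{ijk}\,Q_j,
\]
with $\norm{D_{i,k}}_{B(\H)}\leq \norm{\phi}_\infty$ since the $Q_j$ are pairwise orthogonal.

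The crucial observation is that the operators $T_{i,k}:=P_iXD_{i,k}YR_k$ are pairwise orthogonal in $S^2(\H)$: on expanding $\operatorname{tr}(T_{i_1,k_1}T_{i_2,k_2}^*)$ and cycling, one encounters the factors $P_{i_2}P_{i_1}$ and $R_{k_1}R_{k_2}$, which vanish unless $(i_1,k_1)=(i_2,k_2)$. Hence
\[
\norm{\Gamma(\phi)(X,Y)}_2^2 \,=\, \sum_{i,k}\norm{T_{i,k}}_2^2.
\]

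For each term I would use the submultiplicative bound $\norm{T_{i,k}}_2\leq \norm{P_iXD_{i,k}}_2\,\norm{YR_k}_{B(\H)}$, and a direct trace computation based on $D_{i,k}D_{i,k}^*=\sum_j\vert c_{ijk}\vert^2 Q_j$ gives
\[
\norm{P_iXD_{i,k}}_2^2 \,=\, \sum_j \vert c_{ijk}\vert^2\,\norm{P_iXQ_j}_2^2 \,\leq\, \norm{\phi}_\infty^2 \sum_j \norm{P_iXQ_j}_2^2.
\]
Summing over $i$ uses the orthogonal decomposition $X=\sum_{i,j}P_iXQ_j$ in $S^2(\H)$, giving $\sum_{i,j}\norm{P_iXQ_j}_2^2=\norm{X}_2^2$; summing over $k$ uses $\norm{YR_k}_{B(\H)}\leq\norm{YR_k}_2$ together with $\sum_k\norm{YR_k}_2^2=\norm{Y}_2^2$. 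Combining these produces the claimed inequality. The one real design choice is the grouping by the outer pair $(i,k)$: with any other grouping one loses the orthogonality needed to decouple the outer sum. Past that point the proof is a routine trace-class computation, so I do not expect any genuine obstacle.
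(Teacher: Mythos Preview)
Your proof is correct and follows essentially the same route as the paper: write $\phi$ over a common refinement, use orthogonality in $S^2(\H)$ over the outer indices $(i,k)$ to decouple the sum, then control each piece and sum using the orthogonal decompositions of $X$ and $Y$. The only cosmetic difference is that the paper handles the inner $j$-sum via Cauchy--Schwarz, bounding $\bigl\|\sum_j c_{ijk}\,P_iXQ_jYR_k\bigr\|_2^2$ by $\|\phi\|_\infty^2\bigl(\sum_j\|P_iXQ_j\|_2^2\bigr)\bigl(\sum_j\|Q_jYR_k\|_2^2\bigr)$, whereas you package the $j$-sum into $D_{i,k}$ and compute $\|P_iXD_{i,k}\|_2^2$ directly via the trace identity $D_{i,k}D_{i,k}^*=\sum_j|c_{ijk}|^2Q_j$, then use the asymmetric bound $\|T_{i,k}\|_2\leq\|P_iXD_{i,k}\|_2\|YR_k\|_2$; both arrive at the same final estimate.
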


\begin{proof}
Let $\phi \in \mathcal{E}_1 \otimes \mathcal{E}_2 \otimes \mathcal{E}_3$. 
There exists a finite family $(F_i^1)_i$ (respectively $(F_j^2)_j$ and $(F_k^3)_k$)
of pairwise disjoint measurable 
subsets of $\sigma(A)$ (respectively of $\sigma(B)$ and $\sigma(C)$) of positive measures, 
as well as 
a family $(m_{ijk})_{i,j,k}$ of complex numbers such that 
\begin{equation}\label{phi}
\phi = \sum_{i,j,k} m_{ijk}\,\chi_{F_i^1} \otimes \chi_{F_j^2} \otimes \chi_{F_k^3}.
\end{equation}
Then we have 
\begin{equation}\label{Norm-phi}
\|\phi\|_{\infty}=\sup_{i,j,k}|m_{ijk}|.
\end{equation}
Let $X,Y \in \mathcal{S}^2(\mathcal{H})$. 
According to the definition of $\Gamma$, we have
$$
\Gamma(\phi)(X,Y) =
\sum_{i,j,k}  m_{ijk} E^A(F^1_i)XE^B(F^2_j)YE^C(F^3_k).
$$
By the pairwise disjointnesses of $(F_i^1)_i$ and $(F_k^3)_k$, 
the elements 
$$
\Bigl(\sum_j m_{ijk}E^A(F^1_i)XE^B(F^2_j)YE^C(F^3_k)\Bigr)_{i,k}
$$
are pairwise orthogonal in $S^2(\mathcal{H})$. Hence 
$$
\| \Gamma(\phi)(X,Y) \|_2^2
= \sum_{i,k} \Bignorm{\sum_j m_{ijk} E^A(F^1_i)XE^B(F^2_j)YE^C(F^3_k)}^2_2.
$$
Applying the Cauchy-Schwarz inequality and (\ref{Norm-phi}),
we deduce that 
\begin{align*}
\| \Gamma(\phi)(X,Y) \|_2^2
& \leq \|\phi\|_{\infty}^2 \sum_{i,k} \Bigl(\sum_j 
\bignorm{E^A(F^1_i)XE^B(F^2_j)}_2\bignorm{E^B(F^2_j) YE^C(F^3_k)}_2 \Bigr)^2 \\
& \leq \|\phi\|_{\infty}^2 \sum_{i,k} \Bigl( 
\sum_j \bignorm{E^A(F^1_i)XE^B(F^2_j)}^2_2 \Bigr) \Bigl(\sum_j 
\bignorm{E^B(F^2_j) YE^C(F^3_k)}_2^2 \Bigr) \\
& \leq \|\phi\|_{\infty}^2 \Bigl(\sum_{i,j} \bignorm{E^A(F^1_i)XE^B(F^2_j)}^2_2 \Bigr) 
\Bigl(\sum_{j,k} \bignorm{E^B(F^2_j) YE^C(F^3_k)}^2_2 \Bigr).
\end{align*}
Since the elements $E^A(F^1_i)XE^B(F^2_j)$ are pairwise orthogonal in $S^2(\mathcal{H})$ we have
\begin{align*}
\sum_{i,j} \bignorm{E^A(F^1_i)XE^B(F^2_j)}^2_2
& = \Bignorm{\sum_{i,j} E^A(F^1_i)XE^B(F^2_j)}_2^2 \\
& = \bignorm{E^A\left( \cup_i F_i^1 \right)XE^B\left( \cup_j F_j^2 \right)}_2^2 \\
& \leq \|X\|_2^2.
\end{align*}
Similarly, 
$$
\sum_{j,k} \bignorm{E^B(F^2_j) YE^C(F^3_k)}^2_2 \leq \|Y\|_2^2.
$$
This yields the result.
\end{proof}

We let 
$$
G:=\overline{\mathcal{E}_1 \otimes \mathcal{E}_2 \otimes \mathcal{E}_3}^{\|.\|_{\infty}} 
\subset L^{\infty}(\lambda_A \times \lambda_B \times \lambda_C),
$$
equipped with the $L^\infty$-norm, 
and we let $\tau\colon L^1(\lambda_A \times \lambda_B \times \lambda_C)\to G^*$
be the canonical map defined by 
$$
\bigl\langle \tau(\varphi),\phi\bigr\rangle = \int_{\sigma(A)\times\sigma(B)\times\sigma(C)}
\varphi\phi\  \text{d}(\lambda_A\times\lambda_B\times\lambda_C)\,,
\qquad \varphi\in L^1, \ \phi\in G.
$$
This is obviously a contraction.

We claim that $\tau$ is actually an isometry. To check this fact,
consider $\varphi\in
\mathcal{E}_1 \otimes \mathcal{E}_2 \otimes \mathcal{E}_3$, that we write
as a finite sum
$$
\varphi = \sum_{i,j,k} c_{ijk}\,\chi_{F_i^1} \otimes \chi_{F_j^2} \otimes \chi_{F_k^3},
$$
with $c_{ijk}\in\Cdb\setminus\{0\}$ and $(F_i^1)_i$ (respectively $(F_j^2)_j$ and $(F_k^3)_k$)
being pairwise disjoint measurable 
subsets of $\sigma(A)$ (respectively of $\sigma(B)$ and $\sigma(C)$), with positive measures.
Then
$$
\norm{\varphi}_1 =  \sum_{i,j,k} \vert c_{ijk}\vert\,\lambda_A(F_i^1)
\lambda_B(F_j^2)\lambda_C(F_k^3).
$$
Let $\phi$ be defined by (\ref{phi}), with $m_{ijk}=\vert c_{ijk}\vert c_{ijk}^{-1}$. Then
$\norm{\phi}_\infty=1$ by (\ref{Norm-phi}) and 
$$
\bigl\langle\tau(\varphi),\phi\bigr\rangle = \sum_{i,j,k} m_{ijk} c_{ijk} \lambda_A(F_i^1)
\lambda_B(F_j^2)\lambda_C(F_k^3) = \norm{\varphi}_1.
$$
Hence we have $\norm{\tau(\varphi)}=\norm{\varphi}_1$ as expected. Since 
$\mathcal{E}_1 \otimes \mathcal{E}_2 \otimes \mathcal{E}_3$ is dense in
$L^1(\lambda_A \times \lambda_B \times \lambda_C)$, this implies that
$\tau$ is an isometry.

According to this property, we now consider 
$L^1(\lambda_A \times \lambda_B \times \lambda_C)$
as a subspace of $G^*$.

By (\ref{biliproj}),  (\ref{dualproj}) and (\ref{dualS2}), we have isometric identifications
\begin{align*}
B_2(S^2(\mathcal{H}) \times S^2(\mathcal{H}), S^2(\mathcal{H}))
& =B(S^2(\mathcal{H}) \overset{\wedge}{\otimes} S^2(\mathcal{H}),S^2(\mathcal{H})) \\
& = \bigl(S^2(\mathcal{H}) \overset{\wedge}{\otimes} 
S^2(\mathcal{H}) \overset{\wedge}{\otimes} S^2(\mathcal{H})\bigr)^*.
\end{align*}
It is easy to check that the duality pairing providing this identification reads
$$
\bigl\langle T, X\otimes Y\otimes Z\bigr\rangle =\text{tr}\bigl(T(X,Y)Z\bigr)
$$
for any $T\in B_2(S^2(\mathcal{H}) \times S^2(\mathcal{H}), S^2(\mathcal{H}))$
and any $X,Y,Z\in S^2(\mathcal{H})$.

We set 
$$
E=S^2(\mathcal{H}) \overset{\wedge}{\otimes} 
S^2(\mathcal{H}) \overset{\wedge}{\otimes} S^2(\mathcal{H}).
$$
According to Lemma \ref{Gamma}, $\Gamma$ uniquely extends to a contraction
$$
\widetilde{\Gamma}\colon G\longrightarrow   
B_2(S^2(\mathcal{H}) \times S^2(\mathcal{H}), S^2(\mathcal{H})) = E^*.
$$
We can therefore consider 
$S=\widetilde{\Gamma}^*_{|E} : E \rightarrow G^*$, 
the restriction of $\widetilde{\Gamma}^*$ to $E\subset E^{**}$.

\begin{lemma}\label{eta}
The operator $S$ takes its values in the subspace 
$L^1(\lambda_A \times \lambda_B \times \lambda_C)$ of $G^*$.
\end{lemma}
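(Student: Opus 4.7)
\emph{Proof plan.} The approach is to exhibit a dense subspace of $E$ that $S$ sends into the isometric copy $\tau(L^1(\lambda_A\times\lambda_B\times\lambda_C))$ of $L^1$ inside $G^*$. Since $\tau$ is an isometric embedding, its range is closed in $G^*$, so the boundedness of $S$ makes $S^{-1}(\tau(L^1))$ a closed subspace of $E$; hence it suffices to verify the claim on any dense subset. A convenient choice is the set of elementary tensors $X\otimes Y\otimes Z$ with $X,Y,Z\in\overline{\H}\otimes\H\subset S^2(\H)$: finite-rank operators are dense in $S^2(\H)$ and elementary tensors are dense in the projective tensor product $E$. By trilinearity in $(X,Y,Z)$ it is then enough to treat the rank-one case $X=\overline{\eta}_1\otimes\xi_1$, $Y=\overline{\eta}_2\otimes\xi_2$, $Z=\overline{\eta}_3\otimes\xi_3$.

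For such a triple and a simple tensor $\phi=f_1\otimes f_2\otimes f_3\in\mathcal{E}_1\otimes\mathcal{E}_2\otimes\mathcal{E}_3$, I would unravel the definitions of $\widetilde{\Gamma}$, $S$ and the duality pairing on $E$ to obtain
$$
\bigl\langle S(X\otimes Y\otimes Z),\phi\bigr\rangle\;=\;\text{tr}\bigl(f_1(A)\,X\,f_2(B)\,Y\,f_3(C)\,Z\bigr).
$$
A short direct computation, composing the six operators from right to left, identifies the expression inside the trace as a single rank-one operator and evaluates the trace as the product $\langle f_1(A)\xi_1,\eta_3\rangle\langle f_2(B)\xi_2,\eta_1\rangle\langle f_3(C)\xi_3,\eta_2\rangle$.

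The one non-trivial step is then to recognize this product as an integral against $\lambda_A\times\lambda_B\times\lambda_C$. Because $\lambda_A$ and $E^A$ share the same null sets, the complex measure $F\mapsto\langle E^A(F)\xi_1,\eta_3\rangle$ is absolutely continuous with respect to $\lambda_A$, so the Radon--Nikodym theorem provides a density $g_1\in L^1(\lambda_A)$ with $\langle f_1(A)\xi_1,\eta_3\rangle=\int f_1\,g_1\,\text{d}\lambda_A$; analogously one produces $g_2\in L^1(\lambda_B)$ and $g_3\in L^1(\lambda_C)$. Setting $g:=g_1\otimes g_2\otimes g_3\in L^1(\lambda_A\times\lambda_B\times\lambda_C)$, Fubini yields $\bigl\langle S(X\otimes Y\otimes Z),\phi\bigr\rangle=\bigl\langle\tau(g),\phi\bigr\rangle$ for every simple tensor $\phi$.

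To conclude, this identity extends from simple tensors to all of $\mathcal{E}_1\otimes\mathcal{E}_2\otimes\mathcal{E}_3$ by linearity, and then to $G$ by continuity of both functionals together with the density of $\mathcal{E}_1\otimes\mathcal{E}_2\otimes\mathcal{E}_3$ in $G$. Thus $S(X\otimes Y\otimes Z)=\tau(g)\in\tau(L^1)$ on the chosen dense subspace, which by the closed-subspace reduction above yields the lemma. The genuine obstacle is the Radon--Nikodym/absolute-continuity step that turns the product of three spectral inner products into an $L^1$ tensor-product density on the triple space; everything else is definition-chasing, a rank-one trace calculation, and density.
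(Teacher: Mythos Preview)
Your proposal is correct and follows essentially the same route as the paper: reduce to rank-one tensors by density, compute the trace as a product of three spectral inner products, and then identify each factor as integration against an $L^1$-density on the corresponding spectrum. The only cosmetic difference is that the paper obtains the densities $h_i$ by invoking the $w^*$-continuity of the functional calculus $\pi_A$ (so that $\pi_A$ has a predual map $S^1(\H)\to L^1(\lambda_A)$), whereas you appeal directly to the Radon--Nikodym theorem for the complex measure $F\mapsto\langle E^A(F)\xi_1,\eta_3\rangle$; the paper itself notes parenthetically that these two descriptions coincide.
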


\begin{proof}
Let $\P=\overline{\mathcal{H}}\otimes \mathcal{H} 
\otimes \overline{\mathcal{H}}\otimes \mathcal{H}\otimes 
\overline{\mathcal{H}}\otimes \mathcal{H}$. Recall that we identify
$\overline{\mathcal{H}}\otimes\mathcal{H}$ with the space of finite rank
operators on $\mathcal{H}$. Then 
$\overline{\mathcal{H}}\otimes\mathcal{H}$ is a dense 
subspace of $\mathcal{S}^2(\mathcal{H})$. Consequently 
$\P$ is a dense subspace of $E$. Since $S$ is continuous, it therefore 
suffices to show that $S(\P) \subset L^1(\lambda_A 
\times \lambda_B \times \lambda_C)$.
Consider $\eta_1,\eta_2,\eta_3,\xi_1,\xi_2,\xi_3$ in $\mathcal{H}$ and 
$\omega=\overline{\xi_1}\otimes \eta_1\otimes \overline{\xi_2}\otimes 
\eta_2\otimes \overline{\xi_3}\otimes \eta_3$.
Such elements span  $\P$ hence it suffices to
check that $S(\omega)$ belongs to $L^1(\lambda_A \times \lambda_B \times \lambda_C)$.
Let $f_1\in \mathcal{E}_1$, $f_2\in \mathcal{E}_2$ and $f_3\in \mathcal{E}_3$. We have
\begin{align*}
\left\langle S(\omega),f_1\otimes f_2 \otimes f_3 \right\rangle
& = \left\langle \omega, \Gamma(f_1\otimes f_2 \otimes f_3) \right\rangle \\
& = \text{tr}\Bigl(\bigl[\Gamma(f_1\otimes f_2 \otimes f_3)
(\overline{\xi_1}\otimes \eta_1, \overline{\xi_2}\otimes \eta_2)\bigr]
(\overline{\xi_3}\otimes \eta_3)\Bigr) \\
& = \text{tr}\bigl(f_1(A)(\overline{\xi_1}\otimes \eta_1)f_2(B)
(\overline{\xi_2}\otimes \eta_2)f_3(C)
(\overline{\xi_3}\otimes \eta_3)\bigr) \\
& = \text{tr}\bigl((\overline{\xi_1}\otimes f_1(A)\eta_1)
(\overline{\xi_2}\otimes f_2(B)\eta_2)
(\overline{\xi_3}\otimes f_3(C)\eta_3)\bigr) \\
& = \text{tr}\bigl((\overline{\xi_3}\otimes f_1(A)\eta_1)
\left\langle f_3(C)\eta_3, \xi_2 \right\rangle \left\langle f_2(B)\eta_2, \xi_1 \right\rangle\bigr) \\
& = \left\langle f_3(C)\eta_3, \xi_2 \right\rangle 
\left\langle f_2(B)\eta_2, \xi_1 \right\rangle \left\langle f_1(A)\eta_1, \xi_3 \right\rangle.
\end{align*}
We mentioned that the functional calculus
$*$-representation $\pi_A\colon L^\infty(\lambda_A)\to B(\mathcal{H})$
from (\ref{piA}) is $w^*$-continuous. Thus $\pi_A$ is the adjoint
of some $w_A \colon S^1(\H)\to L^1(\lambda_A)$. Let $h_1=w_A(\overline{\xi_3}\otimes \eta_1)$.
Then this element of $L^1(\lambda_A)$ (which does not depend on $f_1$) satisfies
$$
\left\langle f_1(A)\eta_1, \xi_3 
\right\rangle= \int_{\sigma(A)} f_1 h_1\, \text{d}\lambda_A\,.
$$
(A thorough look at the
construction of $\pi_A$ shows that $h_1$ is actually
the Radon-Nikodym derivative of the measure $dE^A_{\eta_1,\xi_3}$ with respect to $\lambda_A$.)

Similarly, there exist $h_{2} \in L^1(\lambda_B)$ and $h_3 \in L^1(\lambda_C)$ not
depending on $f_2$ and $f_3$ such that
$\left\langle f_2(B)\eta_2, \xi_1 
\right\rangle= \int_{\sigma(B)} f_2 h_2\, \text{d}\lambda_B\,$ and 
$\left\langle f_3(C)\eta_3, \xi_2 
\right\rangle= \int_{\sigma(C)} f_3 h_3\, \text{d}\lambda_C\,$. Consequently,
$$
\left\langle S(\omega), f_1\otimes f_2 \otimes f_3 \right\rangle
= \int_{\sigma(A)\times\sigma(B)\times\sigma(C)}
(f_1\otimes f_2\otimes f_3)(h_1\otimes h_2\otimes h_3)\, 
\text{d}(\lambda_A\times \lambda_B\times\lambda_C).
$$
Since $\mathcal{E}_1\otimes \mathcal{E}_2\otimes\mathcal{E}_3$ is dense in $G$, this implies that 
$$
S(\omega) = h_1 \otimes h_{2}\otimes h_{3} \in L^1(\lambda_A \times \lambda_B \times \lambda_C).
$$
\end{proof}

\begin{theorem}\label{GammaABC}
There exists a unique $w^*$-continuous contraction
$$
\Gamma^{A,B,C} \colon L^{\infty}(\lambda_A \times \lambda_B \times \lambda_C) 
\longrightarrow B_2(S^2(\mathcal{H}) \times S^2(\mathcal{H}),S^2(\mathcal{H})),
$$
such that for any $f_1\in L^\infty(\lambda_A)$, $f_2 \in 
L^\infty(\lambda_B)$ and $f_3\in L^\infty(\lambda_C)$, and for any $X,Y \in S^2(\mathcal{H})$, we have
\begin{equation}\label{Def-G2}
\Gamma^{A,B,C}(f_1\otimes f_2\otimes f_3)(X,Y)=f_1(A)Xf_2(B)Yf_3(C).
\end{equation}
\end{theorem}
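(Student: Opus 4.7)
The plan is to define $\Gamma^{A,B,C}$ as the adjoint of the map $S$ of Lemma \ref{eta}. Since $\widetilde{\Gamma}\colon G\to E^*$ is a contraction, so is $\widetilde{\Gamma}^*\colon E^{**}\to G^*$, hence so is its restriction $S\colon E\to G^*$. Lemma \ref{eta} ensures that $S$ actually takes values in $L^1(\lambda_A\times\lambda_B\times\lambda_C)\subset G^*$; since $\tau\colon L^1\hookrightarrow G^*$ is isometric, $S$ is a contraction into $L^1(\lambda_A\times\lambda_B\times\lambda_C)$ as well. I will therefore set
$$
\Gamma^{A,B,C}:=S^*\colon L^\infty(\lambda_A\times\lambda_B\times\lambda_C)\longrightarrow E^*=B_2(S^2(\H)\times S^2(\H),S^2(\H)),
$$
which is automatically a $w^*$-continuous contraction.

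Next I verify that $\Gamma^{A,B,C}$ coincides with $\widetilde{\Gamma}$ on $G$. For $\phi\in G$ and $e\in E$,
$$
\bigl\langle \Gamma^{A,B,C}(\phi),e\bigr\rangle_{E^*,E}=\langle\phi,S(e)\rangle_{L^\infty,L^1}=\int\phi\, S(e)\,d(\lambda_A\times\lambda_B\times\lambda_C),
$$
and this coincides with $\langle\phi,\tau(S(e))\rangle_{G,G^*}=\langle\widetilde{\Gamma}(\phi),e\rangle_{E^*,E}$ by the very definition of $\tau$. Combined with the defining identity (\ref{Def-G}), this yields (\ref{Def-G2}) whenever $f_1,f_2,f_3$ are simple.

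It remains to upgrade (\ref{Def-G2}) from simple $f_i$'s to arbitrary $f_i\in L^\infty(\lambda_\cdot)$, and to establish uniqueness; both will follow from $w^*$-density of simple functions together with separate $w^*$-continuity. For fixed $f_2,f_3$, the map $f_1\mapsto f_1\otimes f_2\otimes f_3$ from $L^\infty(\lambda_A)$ into $L^\infty(\lambda_A\times\lambda_B\times\lambda_C)$ is $w^*$-continuous, since by Fubini it is the adjoint of the contraction $\varphi\mapsto\int f_2(t_2)f_3(t_3)\varphi(\cdot,t_2,t_3)\,d(\lambda_B\times\lambda_C)$ between the corresponding $L^1$-spaces; composing with $\Gamma^{A,B,C}$ shows that $f_1\mapsto\Gamma^{A,B,C}(f_1\otimes f_2\otimes f_3)$ is $w^*$-continuous into $E^*$. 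The right-hand side is also $w^*$-continuous in $f_1$: tested against $X\otimes Y\otimes Z\in E$ via the trace pairing it becomes $\langle f_1(A),\, Xf_2(B)Yf_3(C)Z\rangle_{B(\H),S^1(\H)}$, and $Xf_2(B)Yf_3(C)Z\in S^1(\H)$ while $\pi_A$ is $w^*$-continuous by (\ref{piA}). Since simple functions are $w^*$-dense in $L^\infty(\lambda_A)$, (\ref{Def-G2}) extends to arbitrary $f_1$, and iterating the same argument successively in $f_2$ and $f_3$ yields the formula in full generality. Uniqueness is then immediate since the span of $\mathcal{E}_1\otimes\mathcal{E}_2\otimes\mathcal{E}_3$ is $w^*$-dense in $L^\infty(\lambda_A\times\lambda_B\times\lambda_C)$, so any $w^*$-continuous map satisfying (\ref{Def-G2}) is determined on it. The only technical step that requires some care is the separate $w^*$-continuity argument in the last paragraph; once it is in place, the remainder of the proof is formal.
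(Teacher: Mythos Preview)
Your proof is correct and follows essentially the same approach as the paper: define $\Gamma^{A,B,C}=S^*$ using Lemma \ref{eta}, observe that it is a $w^*$-continuous contraction extending $\Gamma$, and then pass from simple $f_i$ to general $f_i\in L^\infty$ by $w^*$-continuity. The paper dispatches this last extension in a single sentence (``Property (\ref{Def-G2}) follows from (\ref{Def-G}) by $w^*$-continuity''), whereas you spell out the separate $w^*$-continuity in each variable explicitly; this added care is appropriate but does not constitute a different argument.
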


\begin{proof}
The uniqueness follows from the $w^*$-density of $L^\infty(\lambda_A)\otimes
L^\infty(\lambda_B)\otimes L^\infty(\lambda_C)$ in the dual space
$L^{\infty}(\lambda_A \times \lambda_B \times \lambda_C)$.

Lemma \ref{eta} provides a contraction $S\colon E\to L^1(\lambda_A \times \lambda_B \times \lambda_C)$.
Then its adjoint $S^*$ is a contraction from 
$L^{\infty}(\lambda_A \times \lambda_B \times \lambda_C)$
into $E^*=B_2(S^2(\mathcal{H}) \times S^2(\mathcal{H}),S^2(\mathcal{H}))$.
We set 
$$
\Gamma^{A,B,C}=S^*.
$$
By construction, $\Gamma^{A,B,C}$ is $w^*$-continuous and extends the map $\Gamma$ 
defined by (\ref{Def-G}). Property (\ref{Def-G2}) follows from (\ref{Def-G}) by 
$w^*$-continuity.
\end{proof}

Later on in Corollary \ref{Iso}, we will show that $\Gamma^{A,B,C}$ is 
actually an isometry.

Bilinear maps of the form $\Gamma^{A,B,C}(\phi)$ will be  called {\it 
triple operator integral mappings} in this paper. Operators of the form
$\Gamma^{A,B,C}(\phi)(X,Y)\colon\H\to\H$ are called triple operator integrals.
As indicated in the Introduction, our goal is to determine the functions 
$\phi\in L^{\infty}(\lambda_A \times \lambda_B \times \lambda_C)$ for 
which the triple operator integral mapping
$\Gamma^{A,B,C}(\phi)$ maps
$S^2(\mathcal{H}) \times S^2(\mathcal{H})$
into $S^1(\mathcal{H})$.

By similar computations (left to the reader), the above construction can be extended to 
$(n-1)$-tuple operator integrals, for any $n\geq 2$. 
One obtains the following statement, in which 
$B_{n-1}(S^2(\mathcal{H}) 
\times S^2(\mathcal{H}) \times \cdots \times S^2(\mathcal{H}),S^2(\mathcal{H}))$ 
denotes the space of bounded $(n-1)$-linear maps from the product of
$(n-1)$ copies of $S^2(\mathcal{H})$ taking values in $S^2(\mathcal{H})$.

\begin{proposition}\label{nintegrals} Let $n \geq 2$ and let 
$A_1, A_2, \ldots, A_n$ be normal operators on $\mathcal{H}$. For
any $i=1,\ldots,n$, let $\lambda_{A_i}$ be a
scalar-valued spectral measure for $A_i$ and let 
$\mathcal{E}_i\subset L^\infty(\lambda_{A_i})$ be the space of
simple functions on $(\sigma(A_i),\lambda_{A_i})$.
There exists a unique $w^*$-continuous contraction
$$
\Gamma^{A_1,A_2, \ldots, A_n} : L^{\infty}\left(\prod_{i=1}^n 
\lambda_{A_i}\right) \longrightarrow 
B_{n-1}(S^2(\mathcal{H}) \times S^2(\mathcal{H}) 
\times \cdots \times S^2(\mathcal{H}) 
\rightarrow S^2(\mathcal{H})),
$$
such that for any  $f_i \in L^\infty(\lambda_{A_i})$ and for any $X_1, 
\ldots, X_{n-1} \in S^2(\mathcal{H})$,
we have
\begin{align*}
\Gamma^{A_1,A_2, \ldots, A_n}(f_1\otimes\cdots\otimes f_n)
& (X_1,\ldots, X_{n-1})=\\
& f_1(A_1)X_1f_2(A_2) \cdots f_{n-1}(A_{n-1})X_{n-1}f_n(A_n).
\end{align*}
\end{proposition}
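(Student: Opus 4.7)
The plan is to mirror the three-variable construction carried out in Subsection \ref{Operators} almost verbatim, taking care only with the $n$-variable generalization of the norm estimate in Lemma \ref{Gamma}. Write $P_{k,i} = E^{A_k}(F^k_i)$ for pairwise disjoint measurable families $(F^k_i)_i\subset\sigma(A_k)$, $k=1,\dots,n$, and for $\phi=\sum_{i_1,\dots,i_n} m_{i_1\dots i_n}\,\chi_{F^1_{i_1}}\otimes\cdots\otimes \chi_{F^n_{i_n}}$ in $\mathcal{E}_1\otimes\cdots\otimes\mathcal{E}_n$ define
$$
\Gamma(\phi)(X_1,\dots,X_{n-1}) = \sum_{i_1,\dots,i_n} m_{i_1\dots i_n}\,P_{1,i_1} X_1 P_{2,i_2} X_2 \cdots X_{n-1} P_{n,i_n}.
$$
First I would verify the norm bound $\|\Gamma(\phi)(X_1,\dots,X_{n-1})\|_2 \leq \|\phi\|_\infty \|X_1\|_2\cdots\|X_{n-1}\|_2$. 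As in the proof of Lemma \ref{Gamma}, the terms corresponding to different pairs $(i_1,i_n)$ of outer indices are pairwise orthogonal in $S^2(\H)$, so
$$
\|\Gamma(\phi)(X_1,\dots,X_{n-1})\|_2^2 = \sum_{i_1,i_n} \Bignorm{\sum_{i_2,\dots,i_{n-1}} m_{i_1\dots i_n}\, P_{1,i_1}X_1 P_{2,i_2}\cdots X_{n-1}P_{n,i_n}}_2^2.
$$
Using the triangle inequality together with the bound $\|AB\|_2\leq\|A\|_2\|B\|_2$ applied to the factorization into the $n-1$ Hilbert--Schmidt pieces $P_{k,i_k}X_k P_{k+1,i_{k+1}}$, and then iterating the Cauchy--Schwarz inequality over $i_{n-1}$, then $i_{n-2}$, and so on down to $i_2$, each step collapses one middle index via the identity $\sum_{i_k}\|P_{k,i_k}X_k P_{k+1,j}\|_2^2=\|X_k P_{k+1,j}\|_2^2$ (which uses the orthogonality of the projections $P_{k,i_k}$). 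A final summation over $i_1$ and $i_n$, again using orthogonality, produces $\|X_1\|_2^2\cdots\|X_{n-1}\|_2^2$.

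With this estimate in hand, the rest of the argument is formal and proceeds exactly as in the proof of Theorem \ref{GammaABC}. Let $G=\overline{\mathcal{E}_1\otimes\cdots\otimes\mathcal{E}_n}^{\|\cdot\|_\infty}$ and extend $\Gamma$ to a contraction $\widetilde{\Gamma}\colon G\to B_{n-1}(S^2(\H)\times\cdots\times S^2(\H),S^2(\H))$. By iterating the identifications \eqref{biliproj}, \eqref{dualproj} and \eqref{dualS2} one obtains
$$
B_{n-1}(S^2(\H)\times\cdots\times S^2(\H),S^2(\H)) = E^*,\qquad E:=\underbrace{S^2(\H)\overset{\wedge}{\otimes}\cdots\overset{\wedge}{\otimes} S^2(\H)}_{n\ \text{copies}},
$$
with pairing $\langle T, X_1\otimes\cdots\otimes X_n\rangle=\mathrm{tr}(T(X_1,\dots,X_{n-1})X_n)$. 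Exactly as before, the canonical map $\tau\colon L^1(\prod_i\lambda_{A_i})\to G^*$ is an isometric embedding (test against sign-matching simple tensors), so we view $L^1(\prod_i\lambda_{A_i})\subset G^*$.

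Next I would prove the $n$-variable analog of Lemma \ref{eta}: the adjoint $S=\widetilde{\Gamma}^*_{|E}\colon E\to G^*$ actually takes values in $L^1(\prod_i\lambda_{A_i})$. It suffices to check this on a dense spanning set of elementary tensors $\omega=\overline{\xi_1}\otimes\eta_1\otimes\cdots\otimes\overline{\xi_n}\otimes\eta_n$ of finite rank operators. A direct trace computation, cycling the trace and using $\overline{\xi}\otimes f_k(A_k)\eta\,$-type identities, reduces $\langle S(\omega),f_1\otimes\cdots\otimes f_n\rangle$ to a product of scalar integrals $\prod_k \int f_k h_k\,d\lambda_{A_k}$, where $h_k\in L^1(\lambda_{A_k})$ is obtained (independently of the $f_k$) by predualizing the $w^*$-continuous functional calculus $\pi_{A_k}$ on a single rank-one operator, exactly as in the proof of Lemma \ref{eta}. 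Therefore $S(\omega)=h_1\otimes\cdots\otimes h_n\in L^1(\prod_i\lambda_{A_i})$.

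Finally, set $\Gamma^{A_1,\dots,A_n}=S^*\colon L^\infty(\prod_i\lambda_{A_i})\to E^*=B_{n-1}(S^2(\H)\times\cdots\times S^2(\H),S^2(\H))$. Being an adjoint, it is a $w^*$-continuous contraction; by construction it extends $\Gamma$ on $\mathcal{E}_1\otimes\cdots\otimes\mathcal{E}_n$, and the stated formula on $f_1\otimes\cdots\otimes f_n$ with arbitrary $f_i\in L^\infty(\lambda_{A_i})$ follows from the $w^*$-continuity together with the $w^*$-density of $\mathcal{E}_i$ in $L^\infty(\lambda_{A_i})$. Uniqueness of $\Gamma^{A_1,\dots,A_n}$ is then immediate from the $w^*$-density of $L^\infty(\lambda_{A_1})\otimes\cdots\otimes L^\infty(\lambda_{A_n})$ in $L^\infty(\prod_i\lambda_{A_i})$. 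The only non-routine step is the iterated Cauchy--Schwarz estimate of the opening paragraph; everything else is a mechanical transcription of the triple-operator case, so the proof can legitimately be left to the reader, as the authors indicate.
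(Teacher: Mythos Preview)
Your proposal is correct and follows exactly the approach the paper intends; the paper itself does not prove Proposition \ref{nintegrals} but states that it follows ``by similar computations (left to the reader)'' from the triple case (Lemma \ref{Gamma}, Lemma \ref{eta}, Theorem \ref{GammaABC}), and you have faithfully outlined the $n$-variable versions of those three steps. The only genuinely new ingredient is the iterated Cauchy--Schwarz in the norm estimate, which you have handled correctly (note only that your ``identity'' $\sum_{i_k}\|P_{k,i_k}X_k P_{k+1,j}\|_2^2=\|X_k P_{k+1,j}\|_2^2$ is in general an inequality $\leq$, which is all that is needed).
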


\begin{remark}\label{n=2} In the case $n=2$, the above proposition 
boils down to the original construction of double operator integrals
by Birman-Solomyak. Namely, let $A,B$ be two normal operators on $\H$,
and let 
$$
\Gamma^{A,B}\colon L^\infty(\lambda_A\times\lambda_B)\longrightarrow B(S^2(\H))
$$
be given by Proposition \ref{nintegrals}. For any $\phi\in L^\infty(\lambda_A\times\lambda_B)$,
let $\J(\phi)\colon S^2(\H)\to S^2(\H)$ be the operator constructed
in \cite[Section 3.1]{BS4} for the spectral measures associated with $A$ and $B$.
Then $\Gamma^{A,B}(\phi)$ coincides $\J(\phi)$.

We note for further use that $\Gamma^{A,B}$ is a $*$-representation 
of the von Neumann algebra $L^\infty(\lambda_A\times\lambda_B)$ on 
the Hilbert space $S^2(\H)$. This is easy to deduce from our definitions; 
also, this follows from \cite[(3.6) and (3.7)]{BS4}.
\end{remark}

\begin{remark}\label{Rem-Pavlov} As indicated in the introduction of this section,
the above construction turns out to be equivalent to Pavlov's definition of multiple
operator integrals given in \cite{Pav}. 
Let us briefly review Pavlov's construction from \cite{Pav},
and explain this `equivalence'. In this remark, 
we use terminology and references from \cite[Chapter 1]{Diestel}.

Let $n \geq 2$ and consider normal operators $A_1, A_2, \ldots, A_n$ as in Proposition
\ref{nintegrals}. Fix operators $X_1, \ldots, X_{n-1}$ in $S^2(\mathcal{H})$. 
Let $\Omega:=\sigma(A)\times\sigma(A_2)\times \cdots \times \sigma(A_n)$ and consider the set 
$\mathcal{F}$ consisting of finite unions of subsets of $\Omega$ of the form
$$
\Delta = F_1 \times F_2 \times \cdots \times F_n,
$$
where, for any $1\leq i \leq n, F_i$ is a Borel subset of $\sigma(A_i)$.

There exists a (necessarily unique) finitely additive vector measure 
$m\colon \mathcal{F}\to S^2(\H)$ such that 
\begin{equation}\label{mDelta}
m(\Delta)=E^{A_1}(F_1)X_1E^{A_2}(F_2) \cdots E^{A_{n-1}}(F_{n-1}) X_{n-1}E^{A_n}(F_n)
\end{equation}
for any $\Delta$ as above.

Pavlov first shows that $m$ is a measure of bounded semivariation and then proves that
$m$ is actually countably additive (see \cite[Theorem 1]{Pav}). Let $\mathcal{T}$
be the $\sigma$-field generated by $\F$. Since $S^2(\H)$ is reflexive, it follows from
\cite[Chapter 1, Section 5, Theorem 2]{Diestel} that $m$ has a (necessarily unique)
countably additive extension $\widetilde{m}\colon \T\to S^2(\H)$. Moreover $\widetilde{m}$
is a measure of bounded semivariation. Then using the fact that for all $i$,
$\lambda_{A_i}$ is a scalar-valued spectral measure for $A_i$, one can show that
$$
\widetilde{m} \ll \lambda_{A_1} \times \lambda_{A_2} \times \cdots \times \lambda_{A_n}
$$
on $\F$. This implies that $L^{\infty}(\lambda_{A_1} \times \lambda_{A_2} 
\times \cdots \times \lambda_{A_n})
\subset L^\infty(\widetilde{m})$ and hence, for any $\phi\in
L^{\infty}(\lambda_{A_1} \times \lambda_{A_2} \times \cdots \times 
\lambda_{A_n})$, one may define an integral
$$
\int_\Omega \phi(t)\,\text{d}\widetilde{m}(t)\ \in S^2(\H).
$$
See \cite[Chapter 1, Section 1, Theorem 13]{Diestel} for details. This element
is defined in \cite{Pav} as the multiple operator integral associated
to $\phi$ and $(X_1, \ldots, X_{n-1})$.

We claim that this construction 
is equivalent to the one given in the present paper, namely
$$
\int_\Omega \phi(t)\,\text{d}\widetilde{m}(t) = \Gamma^{A_1,A_2, 
\ldots, A_n}(\phi) (X_1, \ldots, X_{n-1}).
$$
To check this identity, let $w_1,w_2\colon L^{\infty}
(\lambda_{A_1} \times \lambda_{A_2} \times \cdots \times 
\lambda_{A_n})\to S^2(\H)$ be defined by 
$w_1(\phi) =\int_\Omega \phi(t)\,\text{d}\widetilde{m}(t)$ and 
$w_2(\phi)= \Gamma^{A_1,A_2, \ldots, A_n}(\phi) (X_1, \ldots, X_{n-1})$. 
For any $Z\in S^2(\H)$, the functional of $L^{\infty}
(\lambda_{A_1} \times \lambda_{A_2} \times \cdots \times \lambda_{A_n})$ taking $\phi$ to 
$\Bigl\langle\int_\Omega \phi(t)\,\text{d}\widetilde{m}(t)\, 
,Z\Bigr\rangle$ induces a countably additive measure on $\T$, which is absolutely
continuous with respect to $\lambda_{A_1} \times \lambda_{A_2} \times \cdots \times \lambda_{A_n}$. By the Radon-Nikodym Theorem
it is represented by an element of $L^1(\lambda_{A_1} \times \lambda_{A_2} \times \cdots \times \lambda_{A_n})$. Hence $w_1^*$ maps $S^2(\H)$ into
$L^1(\lambda_{A_1} \times \lambda_{A_2} \times \cdots \times \lambda_{A_n})$.
This implies that $w_1$ is $w^*$-continuous.
We know that $w_2$ is  
$w^*$-continuous as well, by Proposition \ref{nintegrals}. Further 
it is easy to derive from (\ref{mDelta}) that $w_1$ and $w_2$ coincide on $\E_1\otimes\cdots\otimes\E_n$.
These properties imply the equality $w_1=w_2$ as claimed.
\end{remark}

\subsection{Triple operator integrals associated with functions}\label{Functions}

Let $(\Omega_1, \mu_1), (\Omega_2, \mu_2)$ and $(\Omega_3, \mu_3)$ be three 
$\sigma$-finite measure spaces, 
and let $\phi \in L^{\infty}(\Omega_1 \times \Omega_2 \times \Omega_3)$. 
For any $J\in L^2(\Omega_1 \times \Omega_2)$ and $K\in L^2(\Omega_2\times \Omega_3)$, the function
$$
\Lambda(\phi)(K,J) \colon (t_1,t_3) \mapsto \int_{\Omega_2} 
\phi(t_1,t_2,t_3)J(t_1,t_2)K(t_2,t_3) \,\text{d}\mu_2(t_2)
$$
is a well-defined element of $L^2(\Omega_1 \times \Omega_3)$ 
with $L^2$-norm less than $\|\phi\|_{\infty}\|J\|_2\|K\|_2$. 
Indeed, by the Cauchy-Schwarz inequality, we have
\begin{align*}
\int_{\Omega_1 \times \Omega_3}
& \left(\int_{\Omega_2} |\phi(t_1,t_2,t_3) J(t_1,t_2)K(t_2,t_3)| 
\text{d}\mu_2(t_2) \right)^2 \text{d}\mu_1(t_1)\text{d}\mu_3(t_3) \\
& \leq \|\phi\|_{\infty}^2 \int_{\Omega_1 \times \Omega_3} 
\left(\int_{\Omega_2} |J(t_1,t_2)K(t_2,t_3)| \text{d}\mu_2(t_2) 
\right)^2 \text{d}\mu_1(t_1)\text{d}\mu_3(t_3) \\
& \leq \|\phi\|_{\infty}^2 \int_{\Omega_1 \times \Omega_3} 
\left(\int_{\Omega_2} |J(t_1,t_2)|^2 \text{d}\mu_2(t_2) \right) 
\left(\int_{\Omega_2} |K(t_2,t_3)|^2 \text{d}\mu_2(t_2) \right) 
\text{d}\mu_1(t_1)\text{d}\mu_3(t_3) \\
& \leq \|\phi\|_{\infty}^2 \left(\int_{\Omega_1 \times \Omega_2} |J(t_1,t_2)|^2  
\text{d}\mu_1(t_1) \text{d}\mu_2(t_2) \right) 
\left(\int_{\Omega_2\times \Omega_3} 
|K(t_2,t_3)|^2 \text{d}\mu_2(t_2)\text{d}\mu_3(t_3) \right).
\end{align*}
Thus $\Lambda(\phi)$ is a bounded bilinear map from 
$L^2(\Omega_2 \times \Omega_3)\times L^2(\Omega_1 \times \Omega_2)$
into $L^2(\Omega_1 \times \Omega_3)$.
By the isometric identification between $L^2(\Omega_1 \times \Omega_2)$ 
and $S^2(L^2(\Omega_1), L^2(\Omega_2))$ given by (\ref{S2=L2}), and their
analogues for $(\Omega_2,\Omega_3)$ and $(\Omega_1,\Omega_3)$, we may consider
that we actually have a bounded  bilinear map
$$
\Lambda(\phi) \colon  S^2(L^2(\Omega_2), L^2(\Omega_3)) \times 
S^2(L^2(\Omega_1), L^2(\Omega_2))\longrightarrow S^2(L^2(\Omega_1), L^2(\Omega_3)).
$$
In Section \ref{mainresult} we will characterize the functions $\phi$ 
for which $\Lambda(\phi)$ maps $S^2(L^2(\Omega_2), L^2(\Omega_3)) \times 
S^2(L^2(\Omega_1), L^2(\Omega_2))$ into the trace class
$S^1(L^2(\Omega_1), L^2(\Omega_3))$.

Let $E(\Omega_1,\Omega_2,\Omega_3)
= S^2(L^2(\Omega_2), L^2(\Omega_3)) \overset{\wedge}{\otimes} S^2(L^2(\Omega_1), 
L^2(\Omega_2)) \overset{\wedge}{\otimes} S^2(L^2(\Omega_3), L^2(\Omega_1))$.
Arguing as in the preceding subsection, we obtain an isometric identification
$$
E(\Omega_1,\Omega_2,\Omega_3)^*=
B_2( S^2(L^2(\Omega_2), L^2(\Omega_3)) \times 
S^2(L^2(\Omega_1), L^2(\Omega_2)), S^2(L^2(\Omega_1), L^2(\Omega_3)))
$$
for the duality pairing given by
$$
\bigl\langle T,  Y\otimes X\otimes Z\bigr\rangle =\text{tr}\bigl(T(Y,X)Z\bigr)
$$
for any bounded bilinear $T\colon S^2(L^2(\Omega_2), L^2(\Omega_3)) \times 
S^2(L^2(\Omega_1), L^2(\Omega_2))\to S^2(L^2(\Omega_1), L^2(\Omega_3))$ and 
for any 
$X\in S^2(L^2(\Omega_1), L^2(\Omega_2))$, $Y\in S^2(L^2(\Omega_2), 
L^2(\Omega_3))$ and $Z\in S^2(L^2(\Omega_3), L^2(\Omega_1))$.

The following is an analogue of Theorem \ref{GammaABC} for the present setting.

\begin{proposition}\label{Function-case}
The mapping
$$
\Lambda\colon L^\infty(\Omega_1\times\Omega_2\times\Omega_3)\longrightarrow 
E(\Omega_1,\Omega_2,\Omega_3)^*
$$
defined above is a $w^*$-continuous isometry.
\end{proposition}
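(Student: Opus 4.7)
The plan is to construct a predual contraction $S\colon E(\Omega_1,\Omega_2,\Omega_3)\to L^1(\Omega_1\times\Omega_2\times\Omega_3)$ satisfying $\Lambda = S^*$, and then to show that $S$ is a metric surjection, from which both the $w^*$-continuity and the isometric property of $\Lambda$ will follow. For elementary tensors $Y\otimes X\otimes Z$ corresponding via (\ref{S2=L2}) to kernels $K\in L^2(\Omega_2\times\Omega_3)$, $J\in L^2(\Omega_1\times\Omega_2)$, $L\in L^2(\Omega_3\times\Omega_1)$, I would define
$$
\bigl[S(Y\otimes X\otimes Z)\bigr](t_1,t_2,t_3) := J(t_1,t_2)K(t_2,t_3)L(t_3,t_1).
$$
The same Cauchy-Schwarz argument that establishes $\norm{\Lambda(\phi)}\leq\norm{\phi}_\infty$ yields $\norm{JKL}_1\leq\norm{J}_2\norm{K}_2\norm{L}_2$, so this defines a contractive trilinear map which, by the universal property of $\overset{\wedge}{\otimes}$, extends uniquely to a contraction $S$ on the whole projective tensor product $E(\Omega_1,\Omega_2,\Omega_3)$.

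Next, I would compute the pairing $\langle\Lambda(\phi),Y\otimes X\otimes Z\rangle$ on elementary tensors. Using the classical formula $\text{tr}(MN)=\iint M(t_1,t_3)N(t_3,t_1)\,d\mu_1 d\mu_3$ for the product of two Hilbert-Schmidt operators given by their kernels, together with Fubini's theorem, this pairing equals
$$
\int_{\Omega_1\times\Omega_2\times\Omega_3}\phi(t_1,t_2,t_3)J(t_1,t_2)K(t_2,t_3)L(t_3,t_1)\,d\mu \,=\, \bigl\langle\phi,S(Y\otimes X\otimes Z)\bigr\rangle.
$$
By linearity and density of elementary tensors in $E(\Omega_1,\Omega_2,\Omega_3)$, this identifies $\Lambda$ with $S^*$, which in particular gives the $w^*$-continuity.

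It remains to prove that $S^*$ is isometric, for which (by Hahn-Banach) it suffices to show that $S$ maps the closed unit ball of $E(\Omega_1,\Omega_2,\Omega_3)$ onto a norm-dense subset of the closed unit ball of $L^1$. In the $\sigma$-finite setting, simple functions of the form $F=\sum_{i,j,k}c_{ijk}\,\chi_{A_i\times B_j\times C_k}$, where $(A_i)_i,(B_j)_j,(C_k)_k$ are pairwise disjoint families of sets of finite measure, are dense in $L^1(\Omega_1\times\Omega_2\times\Omega_3)$; it is therefore enough to realize each such $F$ as $S(\omega)$ with $\norm{\omega}_\wedge\leq\norm{F}_1$. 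The key observation is the cyclic factorization
$$
\chi_{A_i\times B_j\times C_k}(t_1,t_2,t_3)=\chi_{A_i\times B_j}(t_1,t_2)\cdot\chi_{B_j\times C_k}(t_2,t_3)\cdot\chi_{C_k\times A_i}(t_3,t_1),
$$
which leads to $\omega:=\sum_{i,j,k}c_{ijk}\,X_{\chi_{B_j\times C_k}}\otimes X_{\chi_{A_i\times B_j}}\otimes X_{\chi_{C_k\times A_i}}$. Then $S(\omega)=F$, and the triangle inequality for $\norm{\cdot}_\wedge$ combined with $\norm{X_{\chi_{A\times B}}}_2=(\mu_1(A)\mu_2(B))^{1/2}$ yields
$$
\norm{\omega}_\wedge\,\leq\,\sum_{i,j,k}|c_{ijk}|\,\mu_1(A_i)\mu_2(B_j)\mu_3(C_k)\,=\,\norm{F}_1,
$$
the last equality using the pairwise disjointness of the three families. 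I expect this cyclic factorization to be the only non-routine ingredient; the pleasing feature is that the square roots appearing in the Hilbert-Schmidt norms of the three rectangle kernels balance perfectly to produce equality, which is exactly what forces $\Lambda$ to be an isometry rather than merely a contraction.
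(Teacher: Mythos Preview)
Your proposal is correct and follows essentially the same approach as the paper: both establish the duality formula $\langle\Lambda(\phi),Y\otimes X\otimes Z\rangle=\int\phi\cdot JKL$ to obtain $w^*$-continuity, and both use the cyclic factorization $\chi_{A_i\times B_j\times C_k}=\chi_{A_i\times B_j}\cdot\chi_{B_j\times C_k}\cdot\chi_{C_k\times A_i}$ together with the balancing of square roots in the Hilbert--Schmidt norms to deduce the isometric property. The only cosmetic difference is that you package the argument as ``construct a predual contraction $S$ and show it is a metric surjection'', whereas the paper argues directly that any $\phi$ with $\norm{\phi}_\infty>1$ admits a witness $\psi\in E$ of norm at most $1$ with $\langle\Lambda(\phi),\psi\rangle>1$; these are two phrasings of the same computation.
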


\begin{proof}
Write $E=E(\Omega_1,\Omega_2,\Omega_3)$ for brevity. 
Consider three functions
$J\in L^2(\Omega_1 \times \Omega_2)$, $K\in L^2(\Omega_2\times \Omega_3)$
and $L\in L^2(\Omega_3\times \Omega_1)$. It follows from the computation at the beginning of the
present subsection that 
$$
(t_1,t_3)\mapsto \int_{\Omega_2}\vert J(t_1,t_2)\vert
\vert K(t_2,t_3)\vert\, \text{d}\mu_2(t_2)
$$
is square integrable. Consequently,
the function
$$
\varphi\colon (t_1,t_2,t_3)\mapsto J(t_1,t_2)K(t_2,t_3)L(t_3,t_1)
$$
belongs to $L^1(\Omega_1\times\Omega_2\times\Omega_3)$. 
Further if
$X_J\in S^2(L^2(\Omega_1), L^2(\Omega_2))$, 
$Y_K\in S^2(L^2(\Omega_2), L^2(\Omega_3))$ 
and $Z_L\in S^2(L^2(\Omega_3), L^2(\Omega_1))$
denote the Hilbert-Schmidt operators associated with $J$, $K$ and $L$, respectively, then it follows 
from above that
$$
\bigl\langle\Lambda(\phi),  Y_K\otimes X_J\otimes Z_L\bigr\rangle_{E^*,E}
\,=\, 
\int_{\Omega_1\times\Omega_2\times\Omega_3}
\phi\varphi\,\text{d}(\mu_1\otimes\mu_2\otimes\mu_3) 
=\,\langle\phi,\varphi\rangle_{L^\infty,L^1}
$$
for any $\phi\in  L^\infty(\Omega_1\times\Omega_2\times\Omega_3)$.
This readily implies that $\Lambda$ is $w^*$-continuous.

We already showed that $\Lambda$ is a contraction, let us now prove that it is 
an isometry. Let $\phi\in L^\infty(\Omega_1\times\Omega_2\times\Omega_3)$,
with $\norm{\phi}_\infty>1$. 
We aim at showing that 
$\norm{\Lambda(\phi)}_{E^*}>1$. There exist a function 
$\varphi\in L^1 (\Omega_1\times\Omega_2\times\Omega_3)$ such that 
$\norm{\varphi}_1=1$ and $\langle \phi,\varphi\rangle_{L^\infty,L^1}>1$.
By the density of simple functions in $L^1$, we may assume that
$$
\varphi = \sum_{i,j,k} m_{ijk}\,\chi_{F_i^1} \otimes \chi_{F_j^2} \otimes \chi_{F_k^3},
$$
where $(F_i^1)_i$ (respectively $(F_j^2)_j$ and $(F_k^3)_k$)
is a finite family of pairwise disjoint measurable 
subsets of $\Omega_1$ (respectively of $\Omega_2$ and $\Omega_3$)
and $m_{ijk}\in\Cdb$ for any $i,j,k$. Let $\psi\in E$ be defined by
$$
\psi = \sum_{i,j,k} m_{ijk}\bigl(\chi_{F_j^2} \otimes \chi_{F_k^3}\bigr)
\otimes \bigl(\chi_{F_i^1} \otimes \chi_{F_j^2}\bigr)
\otimes \bigl(\chi_{F_k^3} \otimes \chi_{F_i^1}\bigr).
$$
For any $i,j,k$, we have
\begin{align*}
\bigl\langle \Lambda(\phi),   \bigl(\chi_{F_j^2} \otimes \chi_{F_k^3}\bigr) &
\otimes \bigl(\chi_{F_i^1} \otimes \chi_{F_j^2}\bigr)
\otimes \bigl(\chi_{F_k^3} \otimes \chi_{F_i^1}\bigr)\bigr\rangle_{E^*,E} \\
& =\int_{\Omega_1\times\Omega_2\times \Omega_3} 
\phi(t_1,t_2,t_3) \chi_{F_i^1}(t_1)\chi_{F_j^2}(t_2)\chi_{F_k^3}(t_3)\,
\text{d}\mu_1(t_1)\text{d}\mu_2(t_2)\text{d}\mu_3(t_3)\,.
\end{align*}
This implies that
$$
\langle \Lambda(\phi),\psi\rangle_{E^*,E}\,=\,\langle \phi,\varphi\rangle_{L^\infty,L^1},
$$
and hence that $\langle \Lambda(\phi),\psi\rangle_{E^*,E}>1$.
Now observe that by the definition of the projective tensor product 
(see Subsection \ref{tensorproducts}), we have
$$
\norm{\psi}_E\leq\sum_{i,j,k}\vert m_{ijk}\vert
\norm{\chi_{F_i^1} \otimes \chi_{F_j^2}}_2
\norm{\chi_{F_j^2} \otimes \chi_{F_k^3}}_2
\norm{\chi_{F_k^3} \otimes \chi_{F_j^2}}_2.
$$
Moreover, 
$$
\norm{\chi_{F_i^1} \otimes \chi_{F_j^2}}_2 = 
\norm{\chi_{F_i^1}}_2 \norm{\chi_{F_j^2}}_2 = 
\lambda_1(F_i^1)^{\frac12}\lambda_2(F_j^2)^{\frac12}.
$$
Likewise, $\norm{\chi_{F_j^2} \otimes \chi_{F_k^3}}_2 = 
\lambda_2(F_j^2)^{\frac12}\lambda_3(F_k^3)^{\frac12}$
and $\norm{\chi_{F_k^3} \otimes \chi_{F_j^2}}_2 = 
\lambda_3(F_k^3)^{\frac12}\lambda_1(F_i^1)^{\frac12}$. We deduce that 
$$
\norm{\psi}_E\leq\sum_{i,j,k}\vert m_{ijk}\vert 
\lambda_1(F_j^2)\lambda_2(F_j^2)\lambda_3(F_k^3).
$$
The right-hand side of this inequality is nothing but the 
$L^1$-norm of $\varphi$. Thus we have proved that
$\norm{\psi}_E \leq \norm{\varphi}_1=1$. 
This implies that $\norm{\Lambda(\phi)}_{E^*}>1$ as expected.
\end{proof}

\subsection{Passing from operators to functions}\label{Op-to-Fu}

Let $\mathcal{H}$ be a separable Hilbert space and let $A,B$ and $C$ be 
normal operators on $\mathcal{H}$. We keep the notations from Subsection \ref{Operators}.
We associate the three measure spaces 
$$
(\Omega_1,\mu_1)=(\sigma(C),\lambda_C),\qquad
(\Omega_2,\mu_2)=(\sigma(B),\lambda_B)\qquad\hbox{and}\qquad
(\Omega_3,\mu_3)=(\sigma(A),\lambda_A)
$$ 
and consider the mapping $\Lambda$
defined in Subsection \ref{Functions} for these three measure spaces. 
It maps $L^\infty(\lambda_A\times\lambda_B\times\lambda_C)$ into 
$$
B_2(S^2(L^2(\lambda_B), L^2(\lambda_A))\times S^2(L^2(\lambda_C), L^2(\lambda_B)), 
S^2(L^2(\lambda_C),L^2(\lambda_A))).
$$
The main purpose of this subsection is to establish a precise connection between
this mapping $\Lambda$ and the triple operator integral mapping
$\Gamma^{A,B,C}$ from Theorem \ref{GammaABC}.

We may suppose that
$$
\lambda_A(.)=\|E^A(.)e_1\|^2, \quad \lambda_B(.)=
\|E^B(.)e_2\|^2 \quad \text{and} \quad \lambda_C(.)=\|E^C(.)e_3\|^2
$$
for some separating vectors
$e_1,e_2,e_3\in\mathcal{H}$ (see Subsection \ref{SV}).

There exists a (necessarily unique) 
linear map
$\rho_A\colon \mathcal{E}_1 \longrightarrow   \mathcal{H}$
satisfying
$$
\rho_A(\chi_F) = E^A(F)e_1
$$
for any Borel set $F\subset\sigma(A)$. For any finite family $(F_i)_i$
of pairwise disjoint measurable subsets of $\sigma(A)$ and for
any family $(\alpha_i)_i$ of complex numbers, we have
\begin{align*}
\Bignorm{\rho_A\Bigl(\sum_i\alpha_i \chi_{F_i}\bigr)}^2 & 
=\Bignorm{\sum_i\alpha_i E^A(F_i)e_1}^2 \\ &
= \sum_i \vert\alpha_i\vert^2\norm{E^A(F_i)e_1}^2\\ &
= \sum_i \vert\alpha_i\vert^2\lambda_A(F_i)\\ &
=\Bignorm{\sum_i\alpha_i \chi_{F_i}}_2^2.
\end{align*}
Hence $\rho_A$ extends to an isometry (still denoted by) 
$$
\rho_A : L^2(\lambda_A) \longrightarrow \mathcal{H}.
$$ 
Denote by $\mathcal{H}_A$ the range of $\rho_A$. We obtain
$$
L^2(\lambda_A) \overset{\rho_A}{\equiv} \mathcal{H_A}.
$$
We similarly define $\rho_B, \rho_C$ and $\mathcal{H}_B, \mathcal{H}_C \subset \mathcal{H}$ 
such that
$$
L^2(\lambda_B) \overset{\rho_B}{\equiv} \mathcal{H_B} \quad\text{and} 
\quad L^2(\lambda_C) \overset{\rho_C}{\equiv} \mathcal{H_C}.
$$

We may consider $S^2(\mathcal{H}_B,\mathcal{H}_A)$ as a subspace of $S^2(\mathcal{H})$ in a natural way.
Namely we write $\mathcal{H}=\mathcal{H}_B\overset{2}\oplus \mathcal{H}_B^\perp$ and 
$\mathcal{H}=\mathcal{H}_A\overset{2}\oplus \mathcal{H}_A^\perp$ and identify 
any $S\in S^2(\mathcal{H}_B,\mathcal{H}_A)$ with the matrix
$$
\begin{pmatrix} S & 0 \\ 0 & 0\end{pmatrix}\ \in \,S^2\bigl(\mathcal{H}_B\overset{2}\oplus \mathcal{H}_B^\perp,
\mathcal{H}_A\overset{2}\oplus \mathcal{H}_A^\perp\bigr).
$$
We may similarly 
regard
$S^2(\mathcal{H}_C,\mathcal{H}_B)$ and 
$S^2(\mathcal{H}_C,\mathcal{H}_A)$ as subspaces of $S^2(\mathcal{H})$.

The next statement means that for any 
$\phi\in L^\infty(\lambda_A\times\lambda_B\times\lambda_C)$,
$\Gamma^{A,B,C}(\phi)$ maps $S^2(\mathcal{H}_B,\mathcal{H}_A) 
\times S^2(\mathcal{H}_C,\mathcal{H}_B)$ into $S^2(\mathcal{H}_C,\mathcal{H}_A)$ and that 
under the previous identifications, this restriction `coincides' with $\Lambda(\phi)$.

\begin{proposition}\label{Connection}
Let $X\in S^2(L^2(\lambda_B), L^2(\lambda_A))$ 
and $Y \in S^2(L^2(\lambda_C), L^2(\lambda_B))$, and set
$$
\widetilde{X}=\rho_A \circ X \circ \rho_B^{-1} \in S^2(\mathcal{H}_B, \mathcal{H}_A)
\qquad\text{and}\qquad
\widetilde{Y}=\rho_B \circ Y \circ \rho_C^{-1} \in S^2(\mathcal{H}_C, \mathcal{H}_B).
$$
For any $\phi\in L^\infty(\lambda_A\times\lambda_B\times\lambda_C)$,
$\Gamma^{A,B,C}(\phi)(\widetilde{X},\widetilde{Y})$ 
belongs to $S^2(\mathcal{H}_C,\mathcal{H}_A)$ and 
\begin{equation}\label{subspace}
\Lambda(\phi)(X,Y) = \rho_A^{-1} \circ 
\Gamma^{A,B,C}(\phi)(\widetilde{X},\widetilde{Y}) \circ \rho_C.
\end{equation}
\end{proposition}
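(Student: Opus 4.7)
The plan is to define
$$\Theta(\phi) := \Gamma^{A,B,C}(\phi)(\widetilde X, \widetilde Y) - \rho_A \Lambda(\phi)(X, Y) \rho_C^{-1},$$
viewed as an element of $S^2(\H)$ (with the second summand interpreted in $S^2(\H_C, \H_A) \subset S^2(\H)$ via the block-matrix embedding), and to prove $\Theta \equiv 0$. Since $\mathcal{E}_1 \otimes \mathcal{E}_2 \otimes \mathcal{E}_3$ is $w^*$-dense in $L^\infty(\lambda_A \times \lambda_B \times \lambda_C)$, it will suffice to check the vanishing on elementary tensors, provided that for every $W \in S^2(\H)$ the scalar $\phi \mapsto \text{tr}(\Theta(\phi) W)$ is $w^*$-continuous on $L^\infty(\lambda_A \times \lambda_B \times \lambda_C)$. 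For the first summand this pairing reads $\langle \Gamma^{A,B,C}(\phi), \widetilde X \otimes \widetilde Y \otimes W\rangle$, which is $w^*$-continuous by Theorem \ref{GammaABC}; for the second, trace-cyclicity recasts it as $\langle \Lambda(\phi), Y \otimes X \otimes Z\rangle$ with $Z := \rho_C^* W \rho_A \in S^2(L^2(\lambda_A), L^2(\lambda_C))$, which is $w^*$-continuous by Proposition \ref{Function-case}.

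The algebraic crux of the elementary-tensor case is the intertwining identity $f(A)\rho_A = \rho_A M_f$ as operators $L^2(\lambda_A)\to \H$, where $M_f$ denotes multiplication by $f$. This follows from $\rho_A(\chi_F) = E^A(F)e_1 = \chi_F(A)e_1$ combined with $f(A)\chi_F(A) = (f\chi_F)(A)$, and extends to all $f \in L^\infty(\lambda_A)$ by linearity; the analogous statements hold for $B$ and $C$, and in particular $\H_A, \H_B, \H_C$ are invariant under the respective functional calculi. For $\phi = f_1 \otimes f_2 \otimes f_3$ with $f_i \in \mathcal{E}_i$, I would substitute $\widetilde X = \rho_A X \rho_B^{-1}$ and $\widetilde Y = \rho_B Y \rho_C^{-1}$ into (\ref{Def-G2}) and push each $f_i(\cdot)$ past the neighboring $\rho$'s, exploiting the fact that $\widetilde Y f_3(C)$ vanishes on $\H_C^\perp$ (so that the factor $\rho_C^{-1}$ appears legitimately). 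The product then telescopes to
$$\Gamma^{A,B,C}(\phi)(\widetilde X, \widetilde Y) = \rho_A \bigl(M_{f_1} X M_{f_2} Y M_{f_3}\bigr) \rho_C^{-1},$$
and a direct kernel computation via (\ref{S2=L2}) identifies $M_{f_1} X M_{f_2} Y M_{f_3}$ with $\Lambda(\phi)(X, Y) \in S^2(L^2(\lambda_C), L^2(\lambda_A))$, yielding $\Theta(\phi) = 0$.

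The main obstacle will be the bookkeeping between the four Hilbert spaces $\H, L^2(\lambda_A), L^2(\lambda_B), L^2(\lambda_C)$ and between the kernel description of $\Lambda$ and the operator-product description on $L^2$-spaces, especially since $\Lambda$ in Subsection \ref{Functions} is applied with the measure spaces $(\Omega_1,\Omega_2,\Omega_3) = (\sigma(C),\sigma(B),\sigma(A))$ in reversed order relative to the natural reading of $\phi \in L^\infty(\lambda_A \times \lambda_B \times \lambda_C)$. Careful tracking of this permutation is essential for the kernel computation identifying $M_{f_1} X M_{f_2} Y M_{f_3}$ with $\Lambda(\phi)(X,Y)$ to come out correctly.
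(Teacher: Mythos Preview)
Your approach is correct and follows the same overall strategy as the paper: reduce to elementary tensors $\phi = f_1\otimes f_2\otimes f_3$ using the $w^*$-continuity of $\Gamma^{A,B,C}$ and $\Lambda$, verify the identity there, and conclude. The paper carries out the elementary case slightly differently: it further specializes to rank-one $X = \chi_V\otimes\chi_U$ and $Y = \chi_W\otimes\chi_{V'}$ and computes both sides of (\ref{subspace}) explicitly as multiples of a rank-one operator, then extends in $X,Y$ by linearity and continuity before extending in $\phi$. Your route through the intertwining relation $f(A)\rho_A = \rho_A M_f$ handles arbitrary $X,Y$ at once and replaces the coordinate computation by the operator identity $\Gamma^{A,B,C}(f_1\otimes f_2\otimes f_3)(\widetilde X,\widetilde Y) = \rho_A(M_{f_1}XM_{f_2}YM_{f_3})\rho_C^{-1}$; this is a bit more structural and avoids one layer of density argument, at the cost of having to justify that $\H_A,\H_B,\H_C$ are reducing for the respective functional calculi (which follows since the intertwining holds for $\overline f$ as well). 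One small slip: in your pairing for the $\Lambda$-term the tensor should read $X\otimes Y\otimes Z$ (the first slot of $\Lambda(\phi)$ is the $S^2(L^2(\lambda_B),L^2(\lambda_A))$-argument), but this does not affect the argument.
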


\begin{proof}
We first consider the special case
when $\phi = \chi_{F_1} \otimes \chi_{F_2} \otimes \chi_{F_3}$ 
for some measurable subsets $F_1 \subset \sigma(A), 
F_2 \subset \sigma(B)$ and $F_3 \subset \sigma(C)$.

Let $U \subset \sigma(A), V, V' \subset \sigma(B)$ and $W\subset 
\sigma(C)$ and consider the elementary 
tensors
$$
X = \chi_V \otimes \chi_U \in S^2(L^2(\lambda_B), L^2(\lambda_A))
\quad\hbox{and}\quad
Y=\chi_W \otimes \chi_{V'} \in S^2(L^2(\lambda_C), L^2(\lambda_B)).
$$ 
We associate
$\widetilde{X}$ and $\widetilde{Y}$ as in the statement.
Since $\rho_B\colon L^2(\lambda_B)\to \mathcal{H}_B$ is a unitary, we have  
$\rho_B^{-1}=\rho_B^{*}$ hence 
$$
\widetilde{X} = \rho_B(\chi_V)\otimes \rho_A(\chi_U) =E^B(V)e_2 \otimes E^A(U)e_1.
$$
Likewise,
$$
\widetilde{Y}=E^C(W)e_3 \otimes E^B(V')e_2.
$$

We have
\begin{align*}
\Lambda(\phi)(X,Y)
& = \int_{\sigma(B)} \phi(.,t_2,.)X(t_2,.)Y(.,t_2) \,\text{d}\lambda_B(t_2) \\
& = \int_{\sigma(B)} \chi_{F_2}(t_2)\chi_V(t_2)\chi_{V'}(t_2) 
\ \chi_{F_3}\chi_W \otimes \chi_{F_1}\chi_U \, \text{d}\lambda_B(t_2) \\
& = \left(\int_{F_2 \cap V \cap V'}\  \text{d}\lambda_B(t_2)\right)
\chi_{F_3 \cap W} \otimes \chi_{F_1 \cap U} \\
& = \lambda_B(F_2 \cap V \cap V')\, \chi_{F_3 \cap W} \otimes \chi_{F_1 \cap U}.
\end{align*}
Further using the above expressions of $\widetilde{X}$ and $\widetilde{Y}$, we have
\begin{align*}
\Gamma^{A,B,C}(\phi)(\widetilde{X},\widetilde{Y})
& = E^A(F_1)\widetilde{X}E^B(F_2)\widetilde{Y}E^C(F_3) \\
& = \bigl(E^B(V)e_2 \otimes E^A(F_1 \cap U)e_1\bigr)
\bigl(E^C(F_3\cap W)e_3 \otimes E^B(F_2 \cap V')e_2\bigr) \\
& = \left\langle E^B(F_2 \cap V')e_2,E^B(V)e_2\right\rangle
E^C(F_3\cap W)e_3 \otimes E^A(F_1 \cap U)e_1 \\
& = \left\langle E^B(F_2 \cap V'\cap V)e_2,e_2 \right\rangle 
E^C(F_3\cap W)e_3 \otimes E^A(F_1 \cap U)e_1 \\
& = \lambda_B(F_2 \cap V \cap V')\, E^C(F_3\cap W)e_3 \otimes E^A(F_1 \cap U)e_1.
\end{align*}
This shows that $\Gamma^{A,B,C}(\phi)(\widetilde{X},\widetilde{Y})$ 
belongs to $S^2(\mathcal{H}_C,\mathcal{H}_A)$ and that
(\ref{subspace}) holds true.

By linearity and continuity, this result holds as well
for all $X\in S^2(L^2(\lambda_B), L^2(\lambda_A))$ 
and all $Y \in S^2(L^2(\lambda_C), L^2(\lambda_B))$.

Finally since $\Lambda$ and $\Gamma^{A,B,C}$ are $w^*$-continuous, 
we deduce from the above special case
that the result actually holds true for all $\phi \in L^{\infty}(\lambda_A \times 
\lambda_B \times \lambda_C)$.
\end{proof}

\begin{corollary}\label{Iso}
The mapping $\Gamma^{A,B,C}$ from Theorem \ref{GammaABC} is an isometry.
\end{corollary}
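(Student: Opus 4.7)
The plan is to derive the corollary immediately from Propositions \ref{Function-case} and \ref{Connection}, using the fact that the former already establishes that $\Lambda$ is an isometry.

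First, I would fix $\phi\in L^{\infty}(\lambda_A\times\lambda_B\times\lambda_C)$ and observe that the maps
$\rho_A, \rho_B, \rho_C$ are isometries onto the closed subspaces
$\H_A, \H_B, \H_C \subset \H$. Consequently, for arbitrary $X\in S^2(L^2(\lambda_B),L^2(\lambda_A))$ and $Y\in S^2(L^2(\lambda_C),L^2(\lambda_B))$, the associated operators $\widetilde{X}\in S^2(\H_B,\H_A)$ and $\widetilde{Y}\in S^2(\H_C,\H_B)$, when extended by zero to elements of $S^2(\H)$, satisfy $\|\widetilde{X}\|_2=\|X\|_2$ and $\|\widetilde{Y}\|_2=\|Y\|_2$. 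Likewise, the identity (\ref{subspace}) from Proposition \ref{Connection} immediately yields
$$
\|\Lambda(\phi)(X,Y)\|_2 \,=\, \|\Gamma^{A,B,C}(\phi)(\widetilde{X},\widetilde{Y})\|_2,
$$
since conjugating by $\rho_A^{-1}$ on the left and $\rho_C$ on the right preserves the Hilbert--Schmidt norm, and since the Hilbert--Schmidt norm of an operator in $S^2(\H_C,\H_A)$ coincides with its Hilbert--Schmidt norm when viewed as an element of $S^2(\H)$.

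Next, I would take a supremum. Using the bilinear boundedness of $\Gamma^{A,B,C}(\phi)$, the previous identity gives
$$
\|\Lambda(\phi)(X,Y)\|_2 \,\leq\, \bigl\|\Gamma^{A,B,C}(\phi)\bigr\|\,\|X\|_2\|Y\|_2,
$$
and hence $\|\Lambda(\phi)\|\leq \|\Gamma^{A,B,C}(\phi)\|$.

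Finally, I would combine this with the two key facts already proved. By Proposition \ref{Function-case}, $\Lambda$ is an isometry, so $\|\Lambda(\phi)\|=\|\phi\|_\infty$. By Theorem \ref{GammaABC}, $\Gamma^{A,B,C}$ is a contraction, so $\|\Gamma^{A,B,C}(\phi)\|\leq \|\phi\|_\infty$. Chaining these bounds,
$$
\|\phi\|_\infty \,=\, \|\Lambda(\phi)\| \,\leq\, \bigl\|\Gamma^{A,B,C}(\phi)\bigr\|\,\leq\, \|\phi\|_\infty,
$$
which forces equality throughout. Since $\phi$ was arbitrary, $\Gamma^{A,B,C}$ is an isometry.

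There is no real obstacle here: all substantive work has already been carried out in Proposition \ref{Function-case} (the isometric property of $\Lambda$) and Proposition \ref{Connection} (the intertwining relation). The only point to handle carefully is the identification of $S^2(\H_B,\H_A)$ as a subspace of $S^2(\H)$ via extension by zero, ensuring the $S^2$-norms match, but this is immediate from the block-matrix description adopted just before Proposition \ref{Connection}.
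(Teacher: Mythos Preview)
Your proof is correct and follows essentially the same approach as the paper: use Proposition \ref{Connection} and the isometric property of the $\rho$'s to deduce $\|\Lambda(\phi)\|\leq\|\Gamma^{A,B,C}(\phi)\|$, then sandwich with Proposition \ref{Function-case} and the contractivity from Theorem \ref{GammaABC}. The only cosmetic difference is that you note the equality $\|\Lambda(\phi)(X,Y)\|_2=\|\Gamma^{A,B,C}(\phi)(\widetilde{X},\widetilde{Y})\|_2$ whereas the paper records only the inequality, but either suffices.
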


\begin{proof}
Consider $\phi \in L^{\infty}(\lambda_A \times 
\lambda_B \times \lambda_C)$. For any
$X$ in $S^2(L^2(\lambda_B), L^2(\lambda_A))$ 
and any $Y$ in $S^2(L^2(\lambda_C), L^2(\lambda_B))$, we have  
\begin{align*}
\|\Lambda(\phi)(X,Y)\|_2
& = \|\rho_A^{-1} \circ \Gamma^{A,B,C}(\phi)(\widetilde{X},\widetilde{Y}) \circ \rho_C\|_2 \\
& \leq  \|\Gamma^{A,B,C}(\phi)(\widetilde{X},\widetilde{Y})\|_2 \\
& \leq \bignorm{\Gamma^{A,B,C}(\phi)}\| \widetilde{X} \|_2 \| \widetilde{X} \|_2
\end{align*}
by Proposition \ref{Connection}.
Since $\| \widetilde{X} \|_2=\norm{X}_2$ and $\| \widetilde{Y} \|_2=\norm{Y}_2$,
this implies that
\begin{equation}\label{comparison2}
\bignorm{\Lambda(\phi)}\leq \bignorm{\Gamma^{A,B,C}(\phi)}.
\end{equation}
By Proposition \ref{Function-case}, the left-hand side of this inequality is equal to
$\norm{\phi}_\infty$. Further $\Gamma^{A,B,C}$ is a contraction. Hence we obtain that 
$\norm{\Gamma^{A,B,C}(\phi)}=\norm{\phi}_\infty$.
\end{proof}

With a similar proof (left to the reader), one can show that the mapping 
$\Gamma^{A_1,\ldots, A_n}$ 
from Proposition \ref{nintegrals} in an isometry.

\section{$L^p_\sigma$-spaces}\label{Lp}

Let $(\Omega, \mu)$ be a $\sigma$-finite measure space and let 
$E$ be a Banach space. For any $1\leq p \leq +\infty$,
we let $L^p(\Omega;E)$ denote the classical Bochner space of measurable functions
$\varphi\colon\Omega\to E$ (defined up to almost everywhere zero functions)
such that the norm function $\norm{\varphi(\cdotp)}$
belongs to $L^p(\Omega)$ (see e.g. \cite[Chapter II]{Diestel}).

We will consider a dual version.
Assume that $E$ is separable. A function $\phi \colon \Omega \rightarrow E^*$ 
is said to be $w^*$-measurable if for all $x\in E$, the function
$t \in \Omega \mapsto \langle \phi(t), x \rangle$ is measurable. 
In this case, the function 
$t \in \Omega \mapsto \|\phi(t)\|$ is measurable. Indeed, if $(x_n)_n$ is a 
dense sequence in the unit sphere of $E$, 
then $\|\phi(.)\| = \sup_n \vert\langle \phi(.), x_n \rangle\vert$ is the 
supremum of a sequence of measurable functions, hence is measurable.

Let $1\leq q \leq +\infty$. By definition, $L^q_{\sigma}(\Omega;E^*)$ is 
the space of all $w^*$-measurable $\phi \colon \Omega \rightarrow E^*$ such that 
$\|\phi(.)\| \in L^q(\Omega)$, after taking quotient by the functions 
which are equal to $0$ almost
everywhere. We equip this space with
$$
\|\phi\|_q = \| \|\phi(.)\| \|_{L^q(\Omega)}.
$$
Then $(L^q_{\sigma}(\Omega; E^*), \|.\|_q)$ is a Banach space (the proof 
is the same as in the scalar case). Further by construction,
$L^q(\Omega;E^*)\subset L^q_{\sigma}(\Omega; E^*)$ isometrically.

Suppose that $1\leq p < +\infty$ and let $1 < q\leq +\infty$ 
be the conjugate exponent of $p$. For any 
$\phi \in L^q_{\sigma}(\Omega; E^*)$ and any $\varphi \in L^p(\Omega;E)$,
the function $t \mapsto \langle \phi(t), 
\varphi(t)\rangle$ is measurable. Indeed any element of $L^p(\Omega;E)$ 
is an almost everywhere
limit of a sequence of $L^p(\Omega)\otimes E$, hence it suffices
to check this fact 
when $\varphi \in L^p(\Omega)\otimes E$. In this case,
the measurablity of $\langle \phi(\cdotp), 
\varphi(\cdotp)\rangle$ is a straightforward consequence of the
$w^*$-measurability of $\phi$. 
By H\" older's inequality, the function $\langle \phi(\cdotp), 
\varphi(\cdotp)\rangle$ is actually integrable, which yields 
a duality pairing
\begin{equation}\label{pairing}
\langle \phi, \varphi \rangle := \int_{\Omega} 
\langle \phi(t), \varphi(t) 
\rangle \,\text{d}\mu(t)\,.
\end{equation}
Moreover we have
\begin{equation}\label{Holder}
\vert\langle \phi, \varphi \rangle\vert\leq\norm{\phi}_q\norm{\varphi}_p.
\end{equation}

\begin{theorem}\label{Lp-dual}
The duality pairing (\ref{pairing}) induces an isometric isomorphism
\begin{equation}\label{dualityLp}
L^p(\Omega;E)^* = L^q_{\sigma}(\Omega; E^*).
\end{equation}
\end{theorem}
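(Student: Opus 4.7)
The plan is to show that the linear map
$$
J\colon L^q_\sigma(\Omega;E^*)\longrightarrow L^p(\Omega;E)^*,\qquad J(\phi)(\varphi) = \int_\Omega \langle\phi(t),\varphi(t)\rangle\,\text{d}\mu(t),
$$
induced by the pairing (\ref{pairing}) is an isometric isomorphism. Contractivity $\|J(\phi)\|\le\|\phi\|_q$ is a direct restatement of (\ref{Holder}), so the real work is in proving that $J$ is isometric and surjective.

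To upgrade contractivity to isometry, I fix $\phi\in L^q_\sigma(\Omega;E^*)$ and $\varepsilon\in (0,1)$ and build a test function $\varphi\in L^p(\Omega;E)$ with $\|\varphi\|_p\le 1$ and $\langle \phi,\varphi\rangle\ge (1-\varepsilon)\|\phi\|_q$. The construction uses separability of $E$ crucially: pick a sequence $(x_n)_{n\ge 1}$ dense in the closed unit ball of $E$ and partition the set $\{\phi\neq 0\}$ into the measurable pieces
$$
A_n = \bigl\{t\in\Omega : \re{\langle\phi(t),x_n\rangle}>(1-\varepsilon)\|\phi(t)\|\bigr\}\setminus\bigcup_{k<n}A_k.
$$
On each $A_n$, set $\varphi(t)=h(t)x_n$, where $h$ is the usual scalar weight saturating the scalar $L^p$--$L^q$ duality for the measurable function $\|\phi(\cdot)\|\in L^q(\Omega)$. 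A routine estimation, with separate treatment of $1<q<\infty$, $q=\infty$, and $q=1$, then gives the desired inequality.

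For surjectivity, given $\ell\in L^p(\Omega;E)^*$, I first fix $x\in E$ and observe that $f\mapsto \ell(f\otimes x)$ is a bounded linear functional on $L^p(\Omega)$ of norm at most $\|\ell\|\|x\|$. By scalar $L^p$--$L^q$ duality it corresponds to a function $g_x\in L^q(\Omega)$ with $\|g_x\|_q\le\|\ell\|\|x\|$, and $x\mapsto g_x$ is linear in $x$. Choosing a countable $\mathbb{Q}$-linear dense subset $D\subset E$ and concrete representatives of each $g_x$ for $x\in D$, this $\mathbb{Q}$-linearity can be realized pointwise outside a single null set $N$. For $t\notin N$ I define a $\mathbb{Q}$-linear map $\tilde\phi(t)\colon D\to\mathbb{C}$ by $\tilde\phi(t)(x)=g_x(t)$. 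The isometry step just proved, applied to countable subfamilies, then forces $\tilde\phi(t)$ to extend to a bounded element $\phi(t)\in E^*$ with $\|\phi(\cdot)\|\in L^q(\Omega)$ and $\|\phi\|_q\le\|\ell\|$, and the function $\phi$ is $w^*$-measurable by its very definition on $D$. Finally $J(\phi)=\ell$ on the dense subspace $L^p(\Omega)\otimes E\subset L^p(\Omega;E)$, so on all of $L^p(\Omega;E)$ by continuity.

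The main obstacle is the pointwise norm control in the surjectivity step: the bound $\|g_x\|_q\le\|\ell\|\|x\|$ is an $L^q$-bound and does not by itself yield $\|\phi(\cdot)\|\in L^q(\Omega)$, so one really must use a countable dense set in $E$ together with the already-proved isometry (or, equivalently, a Phillips-style weak-$*$ Radon--Nikodym theorem for the $E^*$-valued set function $A\mapsto \ell(\chi_A\otimes \cdot\,)$) to simultaneously pass from norm estimates for each $g_x$ to an $L^q$ estimate for $\|\phi(\cdot)\|$. Once this is in hand, the remaining verifications (linearity, $w^*$-measurability, agreement with $\ell$) are formal.
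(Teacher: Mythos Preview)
Your argument is correct and takes a genuinely different route from the paper's. The paper reduces everything to the finite-dimensional case: it fixes an increasing sequence $(E_n)_n$ of finite-dimensional subspaces with dense union, invokes the classical identity $L^p(\Omega;E_n)^*=L^q(\Omega;E_n^*)$ for each $n$ to obtain representatives $\phi_n\colon\Omega\to E_n^*$ of the restricted functional, arranges coherence $\phi_{n+1}(t)_{|E_n}=\phi_n(t)$ pointwise, and glues these into a single $\phi\colon\Omega\to E^*$ using monotone convergence for the increasing sequence $\|\phi_n(t)\|$. Your approach is more direct and self-contained: you prove isometry first by an explicit near-optimal test function built from a countable dense subset of the unit ball of $E$, and then reuse essentially the same construction on finite subfamilies of $D$ to extract the pointwise norm control in the surjectivity step. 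The paper's approach buys modularity---the finite-dimensional vector duality is a black box---and a very short injectivity proof; yours avoids citing any prior vector-valued duality result and makes the role of separability explicit at each step.

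Two small points. First, the theorem assumes $1\le p<\infty$, so the conjugate exponent satisfies $1<q\le\infty$ and your separate case $q=1$ never arises. Second, the step you flag as the ``main obstacle'' deserves to be written out rather than sketched: what you actually need is that $\psi_N(t):=\max_{n\le N}|g_{x_n}(t)|$ satisfies $\|\psi_N\|_q\le\|\ell\|$ for every finite $N$ (where $(x_n)_n$ enumerates $D$ intersected with the unit ball), and this follows by the very same test-function construction as in your isometry step---partition $\Omega$ according to which $|g_{x_n}|$ realizes the maximum, and pair with the scalar $L^p$-dualizing weight for $\psi_N$. Monotone convergence then gives $\|\sup_n|g_{x_n}(\cdotp)|\|_q\le\|\ell\|$, which is exactly the bound allowing $\tilde\phi(t)$ to extend to an element of $E^*$ almost everywhere.
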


The above theorem is well-known and has extensions to the non separable case.
However we have not found a satisfactory reference for this simple (=separable)
case and provide a proof below for the sake of
completeness. See \cite[Chapter IV]{Diestel} and the references therein for more
information.

Recall that we have $L^1(\Omega;E)^*=B(L^1(\Omega),E^*)$
by (\ref{L1tensorcor}). Hence in the case $p=1$, the above theorem
yields an isometric identification
\begin{equation}\label{DP}
L^\infty_\sigma(\Omega; F^*) = B(L^1(\Omega),F^*),
\end{equation}
a classical result going back to \cite[Theorem 2.1.6]{DunPet}.

\begin{proof}[Proof of Theorem \ref{Lp-dual}]
The inequality (\ref{Holder}) yields a contractive map
$\kappa\colon L^q_{\sigma}(\Omega; E^*)\to L^p(\Omega;E)^*$. Our aim is to
show that $\kappa$ is an isometric isomorphism.

According to the separability assumption there exists a nondecreasing
sequence $(E_n)_{n\geq 1}$ of finite dimensional subspaces of $E$ such that 
$\cup_n E_n$ is dense in $E$. Since $E_n$ is finite dimensional, 
$L^q_{\sigma}(\Omega, E_n^*)= L^q(\Omega, E_n^*)$ and $E_n$ satisfies the 
conclusion of the theorem to be proved, that is,
\begin{equation}\label{Dual-En}
L^p(\Omega;E_n)^* = L^q(\Omega; E_n ^*)
\end{equation}
isometrically (see \cite[Chapter IV]{Diestel}). In the sequel 
we regard $L^p(\Omega;E_n)$ 
as a subspace of $L^p(\Omega;E)$ in a natural way.

We first note that $\kappa$ is 1-1. Indeed if $\phi\in L^q_\sigma(\Omega;E^*)$
is such that $\kappa(\phi)=0$, then for any $n\geq 1$,
$\phi(t)_{\vert E_n} = 0$ a.e. by (\ref{Dual-En}). Hence 
$\phi(t)_{\vert \cup_n E_n} = 0$ a.e., 
which implies that $\phi(t) = 0$ a.e.

Now let $\delta\in L^p(\Omega;E)^*$, with $\norm{\delta}\leq 1$. Applying 
(\ref{Dual-En}) to the restriction of $\delta$ to $L^p(\Omega;E_n)$
we obtain, for any $n\geq 1$, a measurable function
$\phi_n\colon \Omega\to E_n^*$ such that 
$\norm{\phi_n}_q\leq 1$ and
$$
\forall \,
\varphi\in L^p(\Omega)\otimes E_n,\qquad
\delta(\varphi) = \int_{\Omega} 
\langle \phi_n(t), \varphi(t) 
\rangle \,\text{d}\mu(t)\,.
$$
We may assume that for any $n\geq 1$, we have 
\begin{equation}\label{Restriction}
\forall\, t\in \Omega,\qquad  {\phi_{n+1}(t)}_{\vert E_n}= 
\phi_{n}(t).
\end{equation}
Indeed by construction, ${\phi_{n+1}}_{\vert E_n}= 
\phi_{n}$ a.e. and 
the family $(\phi_n)_{n\geq 1}$ is countable so we can
modify all the functions $\phi_n$ on a common negligible set to
get (\ref{Restriction}).

It follows  that for any $t\in\Omega$,
$(\norm{\phi_n(t)})_{n\geq 1}$
is a nondecreasing sequence, so we can define a measurable
$\nu\colon\Omega\to [0,\infty]$ by
$$
\nu(t) = \lim_n\norm{\phi_n(t)},\qquad t\in \Omega.
$$
If $q<\infty$ we may write
$$
\int_\Omega\nu(t)^q\,\text{d}\mu(t)\, 
= \lim_n\int_\Omega\norm{\phi_n(t)}^q\,\text{d}\mu(t)\,\leq 1,
$$
by the monotone convergence theorem. This implies that
$\nu$ is a.e. finite. If $q=\infty$, the fact that 
$\norm{\phi_n}_\infty\leq 1$ for any $n\geq 1$ implies that
$\nu(t)\leq 1$ for a.e. $t\in\Omega$.
Thus in any case, there exists a negligible
subset $\Omega_0\subset\Omega$ such that $\nu(t)<\infty$ for any
$t\in\Omega\setminus\Omega_0$.

If $t\in\Omega\setminus\Omega_0$, then by (\ref{Restriction}) and the density
of $\cup_n E_n$, there exists a unique element of $E^*$, that we call
$\phi(t)$, such that
$$
\forall\,n\geq 1, \ \forall\, x\in E_n,
\qquad \langle \phi(t),x\rangle 
= \langle \phi_n(t),x\rangle.
$$
Next we set $\phi(t)=0$ for any $t\in\Omega_0$.
We thus have a function $\phi\colon\Omega\to E^*$.

Let $x\in E$ and let $(x_j)_j$ be  a sequence
of $\cup_n E_n$ converging to $x$. Then
$\langle \phi(\cdotp),x_j\rangle\to\langle \phi(\cdotp),x\rangle$ pointwise.
Moreover for any $j$, the function
$\langle \phi(\cdotp),x_j\rangle$ is measurable by construction, hence 
$\langle \phi(\cdotp),x\rangle$ is measurable. Thus $\phi$
is $w^*$-measurable.

Now from the definition of 
$\phi$, we see that
$\delta$ and $\kappa(\phi)$ coincide on $L^p(\Omega)\otimes E_n$
for any $n\geq 1$. Consequently, $\delta=\kappa(\phi)$.
Moreover $\norm{\phi}_q = 
\lim_n\norm{\phi_n}_q\leq 1$. 

This proves that $\kappa$ is a metric surjection,
and hence an isometric isomorphism.
\end{proof}

\begin{remark}\label{RNP}
We already noticed that $L^q_\sigma(\Omega; E^*)=L^q(\Omega; E^*)$ when $E$ is finite dimensional.
It turns out that for a general Banach space $E$, the equality 
$L^q_\sigma(\Omega; E^*)=L^q(\Omega; E^*)$ is equivalent to $E^*$ having the
Radon-Nikodym property, see e.g. \cite[Chapter IV]{Diestel}. All Hilbert spaces
(more generally all reflexive Banach spaces) have the Radon-Nikodym property. Later on 
we will use this property that for any separable Hilbert space $H$ and 
any $1\leq q\leq\infty$, we have
$$
L^q_\sigma(\Omega;H) = L^q(\Omega;H).
$$
\end{remark}

Let $E$ and $F$ be two separable Banach spaces. Being a completion of
$E\otimes F$, 
their projective tensor product
$E \overset{\wedge}{\otimes} F$ is separable as well. Recall that its dual space is equal
to $B(E, F^*)$. Whenever $\phi\colon\Omega\to B(E, F^*)$ is
a $w^*$-measurable function, then for any $x\in E$, the function
$T_\phi(x)\colon \Omega\to F^*$ defined by
\begin{equation}\label{Actiondual}
\bigl[T_\phi(x)\bigr](t) = \bigl[\phi(t)\bigr](x),
\qquad t\in\Omega,
\end{equation}
is $w^*$-measurable.

\begin{corollary}\label{CoroDuality} 
The mapping $\phi\mapsto T_\phi$ given by (\ref{Actiondual})
induces an isometric isomorphism
$$
B(E, L^{\infty}_{\sigma}(\Omega, F^*))=
L^{\infty}_{\sigma}(\Omega; B(E, F^*)).
$$
\end{corollary}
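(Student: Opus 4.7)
My plan is to identify both sides of the claimed equality with the single dual space $L^1(\Omega; E \overset{\wedge}{\otimes} F)^*$ via natural chains of isometric isomorphisms already available in Section~\ref{prel} and Section~\ref{Lp}, and then to verify that the composite identification is precisely $\phi \mapsto T_\phi$.

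First I would check that $\phi \mapsto T_\phi$ is well-defined and contractive. For any $x \in E$ and $y \in F$, the scalar function $t \mapsto \langle [\phi(t)](x), y \rangle$ equals $t \mapsto \langle \phi(t), x \otimes y \rangle$ under the identification $B(E, F^*) = (E \overset{\wedge}{\otimes} F)^*$ from (\ref{dualproj}); so it is measurable by the $w^*$-measurability of $\phi$, which shows $T_\phi(x) \in L^\infty_\sigma(\Omega, F^*)$ with norm at most $\|\phi\|_\infty \|x\|$. Hence $T_\phi \in B(E, L^\infty_\sigma(\Omega, F^*))$ with $\|T_\phi\| \leq \|\phi\|_\infty$.

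Next I would identify the two sides. For the left-hand side, Theorem~\ref{Lp-dual} with $p=1$ gives $L^\infty_\sigma(\Omega, F^*) = L^1(\Omega; F)^*$, and combining (\ref{biliproj}), (\ref{dualproj}), and (\ref{L1tensor}) (with the associativity and commutativity of $\overset{\wedge}{\otimes}$) yields
$$
B(E, L^\infty_\sigma(\Omega, F^*)) = \bigl(E \overset{\wedge}{\otimes} L^1(\Omega; F)\bigr)^* = \bigl(L^1(\Omega) \overset{\wedge}{\otimes} E \overset{\wedge}{\otimes} F\bigr)^* = L^1\bigl(\Omega; E \overset{\wedge}{\otimes} F\bigr)^*.
$$
For the right-hand side, note that $E \overset{\wedge}{\otimes} F$ is separable (as the completion of the algebraic tensor product of two separable Banach spaces). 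Applying Theorem~\ref{Lp-dual} with $p=1$ to this space, together with $(E \overset{\wedge}{\otimes} F)^* = B(E, F^*)$ from (\ref{dualproj}), gives
$$
L^\infty_\sigma\bigl(\Omega; B(E, F^*)\bigr) = L^1\bigl(\Omega; E \overset{\wedge}{\otimes} F\bigr)^*.
$$

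Finally I would verify that the composite of the two identifications transports $\phi$ to $T_\phi$. It suffices to compute both associated pairings on the dense subspace $E \otimes L^1(\Omega) \otimes F$ of $L^1(\Omega; E \overset{\wedge}{\otimes} F)$. For $x \in E$, $f \in L^1(\Omega)$, $y \in F$, unraveling the definitions of the pairings (\ref{pairing}) on each side reduces both to
$$
\int_\Omega f(t) \,\bigl\langle [\phi(t)](x), y \bigr\rangle \, \text{d}\mu(t).
$$
Density together with the boundedness of both functionals then forces the two dual representatives to agree, so $\phi \mapsto T_\phi$ coincides with the composite of two isometric isomorphisms and is itself an isometric isomorphism. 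The main obstacle is not any deep estimate but this final bookkeeping step: one must carefully track through several projective tensor product dualities to confirm that the induced identification is exactly $\phi \mapsto T_\phi$ and not some other natural candidate; once this is verified on elementary tensors, the conclusion follows from density and continuity.
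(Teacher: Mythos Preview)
Your proposal is correct and follows essentially the same route as the paper: both sides are identified with $L^1(\Omega; E \overset{\wedge}{\otimes} F)^*$ via the chain $B(E, L^{\infty}_{\sigma}(\Omega; F^*)) = (E \overset{\wedge}{\otimes} L^1(\Omega;F))^* = (E \overset{\wedge}{\otimes} L^1(\Omega) \overset{\wedge}{\otimes} F)^* = L^1(\Omega; E \overset{\wedge}{\otimes} F)^* = L^{\infty}_{\sigma}(\Omega; B(E, F^*))$, using Theorem~\ref{Lp-dual}, (\ref{dualproj}) and (\ref{L1tensor}). The paper simply asserts that the resulting correspondence is $\phi\mapsto T_\phi$, whereas you spell out the verification on elementary tensors; this extra care is fine but does not constitute a different approach.
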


\begin{proof}
By Theorem \ref{Lp-dual} for $p=1$,
and by (\ref{dualproj}) and (\ref{L1tensor}), 
we have  isometric isomorphisms
\begin{align*}
B(E, L^{\infty}_{\sigma}(\Omega; F^*))
& = \bigl(E \overset{\wedge}{\otimes} L^1(\Omega;F) \bigr)^*\\
& = \bigl( E \overset{\wedge}{\otimes}  L^1(\Omega) \overset{\wedge}{\otimes}  F \bigr)^* \\
& = L^1(\Omega; E \overset{\wedge}{\otimes}  F)^* \\
& = L^{\infty}_{\sigma}(\Omega; B(E, F^*)).
\end{align*}
It is easy to check that the correspondence is given by $(\ref{Actiondual})$.
\end{proof}

\begin{remark}\label{WeakTensorisation}
Let $E_1, E_2$ be two Banach spaces and let $U\colon E_1^*\to E_2^*$
be a $w^*$-continuous map. For any $\phi\in L^\infty_\sigma(\Omega; E_1^*)$,
the composition map $U\circ\phi\colon \Omega\to E_2^*$ belongs to 
$L^\infty_\sigma(\Omega; E_2^*)$ and the mapping $\phi\mapsto U\circ \phi$
is a bounded operator from $L^\infty_\sigma(\Omega; E_1^*)$
into $L^\infty_\sigma(\Omega; E_2^*)$, whose norm is equal $\norm{U}$. 
It is easy to check that this mapping is $w^*$-continuous.
If further $U$ is an isometry, 
then $\phi\mapsto U\circ \phi$ is an isometry as well.

Applying this elementary principle to the embedding of 
$\Gamma_2(L^1(\Omega_1),L^\infty(\Omega_2))$ into the space
$$
B\bigl(\mathcal{K}(L^2(\Omega_1), L^2(\Omega_2)), B(L^2(\Omega_1), L^2(\Omega_2))\bigr),
$$
provided by Theorem \ref{Pellerthm}, we obtain a $w^*$-continuous isometric inclusion
\begin{equation}\label{L-infty-inclusion}
L^\infty_\sigma\bigl(\Omega; \Gamma_2(L^1(\Omega_1),L^\infty(\Omega_2))\bigr)
\,\subset\, 
L^\infty_\sigma\bigl(\Omega; 
B(\mathcal{K}(L^2(\Omega_1), L^2(\Omega_2)), B(L^2(\Omega_1), L^2(\Omega_2))
)\bigr).
\end{equation}
\end{remark}

\vskip 1cm

\section{Measurable factorization in 
$L^{\infty}_{\sigma}(\Omega; \Gamma_2(E, F^*))$}\label{factorization}

The main purpose of this section is to prove Theorem \ref{theofacto} below. 
This result will be applied
in Subsection \ref{Special} (and in Section \ref{mainresult}) 
to the study of measurable Schur multipliers.

We will say that a  measure space $(\Omega,\mu)$ is separable
when $L^2(\Omega,\mu)$ is separable. This implies that $(\Omega,\mu)$
is $\sigma$-finite and moreover,
$L^p(\Omega,\mu)$ is separable for any $1\leq p<\infty$.

\subsection{The general case}\label{General}

It follows from Subsection \ref{tensorproducts} that for
any separable Banach spaces $E,F$, the space
$\Gamma_2(E,F^*)$ is a dual space with a separable predual.
If $H$ is a separable Hilbert space, then 
$B(E,H)$ and $B(F,H)$ are also dual spaces with separable predual.

\begin{theorem}\label{theofacto}
Let $(\Omega,\mu)$ be a separable measure space and 
let $E,F$ be two separable Banach spaces.
Let $\phi\in L^\infty_\sigma\bigl(\Omega;\Gamma_2(E,F^*)\bigr)$. Then there 
exist a separable Hilbert space $H$ and two functions
$$
\alpha\in L^\infty_\sigma\bigl(\Omega;B(E,H)\bigr)
\qquad\hbox{and}\qquad
\beta\in L^\infty_\sigma\bigl(\Omega;B(F,H)\bigr)
$$
such that $\norm{\alpha}_\infty\norm{\beta}_\infty\leq\norm{\phi}_\infty$ and
for any $(x,y)\in E\times F$,
\begin{equation}\label{factor}
\bigl\langle [\phi(t)](x),y\bigr\rangle = \bigl\langle [\alpha(t)](x), [\beta(t)](y)
\bigr\rangle,\qquad \hbox{for a.e.}\ t\in\Omega.
\end{equation}
\end{theorem}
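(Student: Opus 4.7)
The strategy is to first obtain a pointwise Hilbertian factorization of each $\phi(t)$, which is automatic from the definition of $\Gamma_2$, and then bundle these measurably into a single pair of $w^*$-measurable operator-valued functions. By definition of $\gamma_2$, for a.e.\ $t$ we have $\gamma_2(\phi(t))\leq\norm{\phi}_\infty$, so there exist a separable Hilbert space $H_t$ and operators $a_t\colon E\to H_t$, $b_t\colon F\to H_t$ with $\norm{a_t}\norm{b_t}\leq\norm{\phi}_\infty$ and $\langle[\phi(t)](x),y\rangle=\langle a_t(x),b_t(y)\rangle$ for all $x\in E$, $y\in F$. The entire difficulty is coherence: eliminating the dependence of the target Hilbert space on $t$, and arranging $w^*$-measurability.

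I would first dispose of the case where $\Omega$ is countable with purely atomic $\mu$, say $\Omega=\Ndb$. Then $\phi=(\phi_n)_{n\ge 1}$ with $\gamma_2(\phi_n)\le\norm{\phi}_\infty$, and the Hilbertian direct sum $H:=\overset{2}{\oplus}_n H_n$ works directly: if $\iota_n\colon H_n\hookrightarrow H$ is the canonical isometric embedding and one sets $\alpha(n):=\iota_n\circ a_n$, $\beta(n):=\iota_n\circ b_n$, then orthogonality of distinct summands preserves the inner product, and after the rescaling $\|a_n\|=\|b_n\|=\gamma_2(\phi_n)^{1/2}$, we obtain $\norm{\alpha}_\infty\norm{\beta}_\infty\leq\norm{\phi}_\infty$. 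For the general separable $(\Omega,\mu)$, my plan is to approximate by a sequence of finite measurable partitions $(\pi_k)_k$ generating the $\sigma$-algebra modulo null sets, replace $\phi$ by a piecewise-constant $w^*$-measurable approximation $\phi_k$ (e.g.\ an average over each atom), apply the discrete case to get $(\alpha_k,\beta_k)$ valued in a fixed separable $H=\ell^2$ (via a common enlargement), and extract $w^*$-cluster points $\alpha,\beta$ in the $w^*$-compact balls of $L^{\infty}_\sigma(\Omega;B(E,H))$ and $L^{\infty}_\sigma(\Omega;B(F,H))$, whose dualities come from Theorem \ref{Lp-dual} and Corollary \ref{CoroDuality}.

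The main obstacle lies in the fact that identity \eqref{factor} is \emph{bilinear} in the pair $(\alpha,\beta)$, and bilinear expressions do not pass to joint $w^*$-limits: one can have $\alpha_k(t)(x)\rightharpoonup\alpha(t)(x)$ and $\beta_k(t)(y)\rightharpoonup\beta(t)(y)$ in $H$ without $\langle\alpha_k(t)(x),\beta_k(t)(y)\rangle\to\langle\alpha(t)(x),\beta(t)(y)\rangle$. To circumvent this, I expect one must rephrase the problem in operator-ideal language: by the chain of identifications \eqref{dualGamma2}, \eqref{DP} and Corollary \ref{CoroDuality}, the function $\phi$ is encoded by a bounded operator $E\overset{\wedge}{\otimes} L^1(\Omega)\to F^*$, and one argues that the pointwise $\Gamma_2$-factorization of $\phi(t)$, combined with the $L^1$-integration, yields a global factorization of this operator through a separable Hilbert space; unwrapping via Corollary \ref{CoroDuality} and Remark \ref{RNP} (so that $L^\infty_\sigma(\Omega;H)=L^\infty(\Omega;H)$) then produces the required $w^*$-measurable $\alpha$ and $\beta$. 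The separability of $E$, $F$ and $(\Omega,\mu)$ is essential throughout, ensuring that the relevant preduals and Hilbert spaces are separable so that $w^*$-compact balls are metrizable and sequential selection is available.
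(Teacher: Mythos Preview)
Your discrete case is fine, and you correctly isolate the main obstacle: the pairing $(\alpha,\beta)\mapsto\langle\alpha(\cdot)x,\beta(\cdot)y\rangle$ is bilinear and does not pass to $w^*$-limits, so the approximation-by-partitions scheme cannot be closed as stated. The gap is that your proposed workaround in the last paragraph does not actually bypass this difficulty. Encoding $\phi$ as a single operator $u\colon E\overset{\wedge}{\otimes}L^1(\Omega)\to F^*$ and asking for a Hilbert factorization of $u$ produces the wrong structure: a factorization $u=b^*a$ gives $a\colon E\overset{\wedge}{\otimes}L^1(\Omega)\to H$ and $b\colon F\to H$, with $b$ independent of $t$; there is no mechanism to extract a family $\beta(t)\colon F\to H$ from such data, and ``unwrapping via Corollary \ref{CoroDuality}'' does not help because the $\Omega$-dependence sits entirely on the $E$-side. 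Moreover, the assertion that pointwise $\gamma_2$-boundedness of $\phi(t)$ forces $u\in\Gamma_2$ is itself unproved and not obviously true.

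What is actually needed, and what the paper does, is a factorization that respects the $L^\infty(\Omega)$-module structure. One considers the multiplication operator $T=M_\phi\colon L^2(\Omega;E)\to L^2_\sigma(\Omega;F^*)$ and runs a module version of Pisier's Hahn-Banach proof of the $\Gamma_2$-characterization: a sublinear/superlinear pair $p,q$ is built on a cone of functions on $L^\infty(\Omega)\times E^*$, and the separating functional produces an abstract Hilbert space through which $T$ factors. The crucial extra step is to show that this factorization intertwines multiplication by $L^\infty(\Omega)$, so that the intermediate Hilbert space carries a normal $*$-representation of $L^\infty(\Omega)$; Lemma \ref{multip} then embeds it into $L^2(\Omega;H)$, and Lemma \ref{bimod} converts the two module maps into multiplications by $\alpha\in L^\infty_\sigma(\Omega;B(E,H))$ and $\beta\in L^\infty_\sigma(\Omega;B(F,H))$. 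The module compatibility is precisely the missing idea in your outline; without it, no amount of global dualization recovers the $t$-dependence of both factors.
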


We will need two lemmas, in which $(\Omega,\mu)$ denotes an arbitrary 
$\sigma$-finite measure space.

The first one is a variant of the classical classification of 
abelian von Neumann algebras.
For any $\theta\in L^\infty(\Omega)$, and any Hilbert space $H$, 
we let $M_\theta\colon L^2(\Omega;H)\to L^2(\Omega;H)$ 
denote the multiplication operator taking 
any $\varphi\in L^2(\Omega;H)$ to $\theta\varphi$.

\begin{lemma}\label{multip} 
Let $\H$ be a separable Hilbert space and let 
$\pi\colon L^\infty(\Omega)\to B(\H)$
be a $w^*$-continuous $*$-representation. 
There exist a separable Hilbert space $H$ and an
isometric embedding 
$\rho\colon \H\hookrightarrow L^2(\Omega;H)\,$ 
such that for any $\theta\in L^\infty(\Omega)$,
$$
\rho \pi(\theta) = M_\theta\rho.
$$
\end{lemma}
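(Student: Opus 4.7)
The plan is to establish Lemma \ref{multip} by combining a standard cyclic decomposition of $\H$ with the identification of each cyclic piece with a multiplication representation on $L^2(\Omega,\mu)$. Since $\pi$ is a $*$-representation, the orthogonal complement of any $\pi(L^\infty(\Omega))$-invariant closed subspace is itself invariant; a routine Zorn/exhaustion argument, together with the separability of $\H$, then produces an at most countable family $(e_i)_{i\in I}\subset\H$ such that the cyclic subspaces $\H_i:=\overline{\pi(L^\infty(\Omega))e_i}$ are pairwise orthogonal and
$$
\H\,=\,\overset{2}{\oplus}_{i\in I}\H_i.
$$

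For each $i\in I$, the positive linear functional $\theta\mapsto \langle\pi(\theta)e_i,e_i\rangle$ on $L^\infty(\Omega)$ is $w^*$-continuous by the hypothesis on $\pi$, hence belongs to the predual $L^1(\Omega,\mu)$: there exists $h_i\in L^1(\Omega,\mu)_+$ with $\langle\pi(\theta)e_i,e_i\rangle=\int_\Omega \theta h_i\,d\mu$ for every $\theta\in L^\infty(\Omega)$. Applying this to $|\theta|^2$ and using the $*$-representation property yields $\|\pi(\theta)e_i\|^2=\int_\Omega |\theta|^2 h_i\,d\mu$, so the rule
$$
V_i\bigl(\pi(\theta)e_i\bigr)\,:=\,\theta\, h_i^{1/2},\qquad \theta\in L^\infty(\Omega),
$$
extends by density and continuity to an isometry $V_i\colon \H_i\to L^2(\Omega,\mu)$. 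The multiplicativity of $\pi$ gives the key intertwining computation
$$
V_i\pi(\theta')\bigl(\pi(\theta)e_i\bigr)\,=\,V_i\bigl(\pi(\theta'\theta)e_i\bigr)\,=\,\theta'\theta\, h_i^{1/2}\,=\,M_{\theta'}V_i\bigl(\pi(\theta)e_i\bigr),
$$
so that $V_i\pi(\theta')=M_{\theta'}V_i$ on $\H_i$ for every $\theta'\in L^\infty(\Omega)$.

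Setting $H:=\ell^2(I)$, which is a separable Hilbert space since $I$ is at most countable, I then assemble the $V_i$'s into the map
$$
\rho\colon \H\longrightarrow L^2(\Omega;H),\qquad \rho\Bigl(\bigoplus_{i\in I}\xi_i\Bigr)(t)\,:=\,\bigl(V_i(\xi_i)(t)\bigr)_{i\in I},
$$
where I use the canonical isometric identification $L^2(\Omega;\ell^2(I))=\overset{2}{\oplus}_{i\in I}L^2(\Omega,\mu)$. It is isometric because each $V_i$ is, and the intertwining relations on each summand assemble componentwise into $\rho\pi(\theta)=M_\theta\rho$ on all of $\H$, where $M_\theta$ now denotes the multiplication operator on $L^2(\Omega;H)$.

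The principal technical point is the initial cyclic decomposition of $\H$ into at most countably many pieces, which relies on the separability of $\H$; once this is in place, the remaining steps amount to the abelian GNS construction applied to each positive functional $\langle\pi(\cdotp)e_i,e_i\rangle$, together with a direct-sum bookkeeping. The $w^*$-continuity of $\pi$ is used exactly once, but crucially, to represent each such functional by an element $h_i$ of the predual $L^1(\Omega,\mu)$, which is what allows the passage from the abstract representation $\pi$ to a concrete multiplication representation on $L^2(\Omega;H)$.
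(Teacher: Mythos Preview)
Your proof is correct, but it proceeds along a genuinely different route from the paper's. The paper invokes the structure theorem for normal $*$-representations of abelian von Neumann algebras as a black box: it first reduces to $L^\infty(\Omega')$ for some $\Omega'\subset\Omega$ via \cite[Corollary 2.5.5]{Ped}, then appeals to multiplicity theory \cite[Theorem II.3.5]{Davidson} to obtain a unitary from $\H$ onto $\oplus_n L^2(\Omega_n;\ell^2_n)$ intertwining $\pi$ with multiplication, and finally embeds everything into $L^2(\Omega;H)$ with $H=\oplus_n\ell^2_n$. You instead give a self-contained argument: decompose $\H$ into countably many cyclic pieces, use the $w^*$-continuity of $\pi$ to realize each vector state $\langle\pi(\cdotp)e_i,e_i\rangle$ by a density $h_i\in L^1(\Omega)_+$, and build an explicit isometry $V_i\colon\H_i\to L^2(\Omega)$ by $\pi(\theta)e_i\mapsto \theta h_i^{1/2}$. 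This is essentially a direct GNS construction for each cyclic summand, and it avoids citing the structure theorem; in effect you are reproving the portion of that theorem that is needed here. The trade-off is that the paper's proof is two lines plus references, while yours is elementary and transparent about exactly where $w^*$-continuity enters (to get $h_i\in L^1$ rather than merely a finitely additive measure). Both produce a separable $H$: theirs is $\oplus_n\ell^2_n$, yours is $\ell^2(I)$ with $I$ countable.
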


\begin{proof}
Since $\pi$ is $w^*$-continuous, there exists a measurable subset $\Omega'\subset\Omega$ such that
the range of $\pi$ is isomorphic to $L^\infty(\Omega')$ in the von Neumann
algebra sense and $\pi$ coincides with the restriction map (apply \cite[Corollary 2.5.5]{Ped}).
It therefore follows from \cite[Theorem II.3.5]{Davidson} that there exist 
a measurable partition $\{\Omega_n\, :\, 1\leq n\leq \infty\}$ 
of $\Omega'$ and a unitary operator
$$
\rho_1\colon \H\longrightarrow \oplus_{1\leq n\leq\infty}^2 L^2(\Omega_n;\ell^2_n) 
$$
such that for any $\theta\in L^\infty(\Omega)$, $\rho_1\pi(\theta)\rho_1^*$ 
coincides with the multiplication
by $\theta$. (Note that in the above decomposition, 
the index $n$ may be finite or infinite
and the notation $\ell^2_\infty$ stands for $\ell^2$.)
Let
$$
H= \overset{2}{\oplus}_{1\leq n\leq\infty}  \ell^2_n
$$
and consider the canonical embedding 
$$
\rho_2\colon \oplus_{1\leq n\leq\infty}^2 L^2(\Omega_n;\ell^2_n)\longrightarrow L^2(\Omega;H).
$$
Then $\rho=\rho_2\rho_1$ satisfies the lemma. 
\end{proof}

It is well-known that for any Hilbert space $H$, the commutant of
$$
L^\infty(\Omega)\simeq L^\infty(\Omega)\otimes I_H
\,\subset\,B(L^2(\Omega;H))
$$
coincides with $L^\infty(\Omega)\overline{\otimes} B(H)$. 
The next statement is a generalization of this result to the case when $H$ is 
replaced by Banach spaces.

We consider two separable Banach spaces $W_1,W_2$.
Note that by (\ref{dualproj}), $B(W_1,W_2^*)$ is a dual space with separable predual.
We say that a linear map
$$
T\colon L^2(\Omega;W_1)\longrightarrow L^2_\sigma(\Omega; W_2^*)
$$ 
is a module map provided that
$$
\forall\,\varphi\in L^2(\Omega;W_1),\ \forall\, \theta\in L^\infty(\Omega),\qquad
T(\theta \varphi) = \theta T(\varphi).
$$
Next we generalize the notion of multiplication by 
an $L^\infty$-function as follows.
For any $\Delta\in L^\infty_\sigma\bigl(\Omega;B(W_1,W_2^*)\bigr)$, we  
define a multiplication operator
\begin{equation}\label{Multiplication}
M_\Delta\colon L^2(\Omega;W_1)\longrightarrow L^2_\sigma(\Omega;W_2^*)
\end{equation}
by setting 
$$
\bigl[M_\Delta(\varphi)\bigr](t)= [\Delta(t)](\varphi(t)),\qquad t\in\Omega,
$$
for any $\varphi\in L^2(\Omega;W_1)$. 
Indeed it is easy to check (left to the reader) that
the function in the right-hand side 
of the above equality belongs to $L^2_\sigma(\Omega;W_2^*)$. Moreover 
\begin{equation}\label{Multiplication2}
\norm{M_\Delta}
=\norm{\Delta}_\infty.
\end{equation}
Each multiplication operator $M_\Delta$ is a module map, as we have
$$
M_\Delta(\theta\varphi)=M_{\Delta\theta}(\varphi) = \theta M_\Delta(\varphi)
$$
for any $\theta\in L^\infty(\Omega)$.
The following lemma is a converse.

\begin{lemma}\label{bimod} 
Let $T\colon L^2(\Omega;W_1)
\to L^2_\sigma(\Omega;W_2^*)$ be a  module map. 
Then there exists a function $\Delta\in 
L^\infty_\sigma\bigl(\Omega;  B(W_1,W_2^*)\bigr)$ such that $T=M_\Delta$.
\end{lemma}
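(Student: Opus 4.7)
The plan is to decompose $T$ into a family of scalar-input module maps indexed by $x\in W_1$, identify each such map as multiplication by an element of $L^\infty_\sigma(\Omega;W_2^*)$, and then assemble these via Corollary \ref{CoroDuality}. More precisely, for each $x\in W_1$ I would introduce the bounded linear map
$$
S_x\colon L^2(\Omega)\longrightarrow L^2_\sigma(\Omega;W_2^*),\qquad S_x(\theta)=T(\theta\cdot x),
$$
where $\theta\cdot x\in L^2(\Omega;W_1)$ denotes the function $t\mapsto\theta(t)x$. The bound $\norm{S_x}\leq\norm{T}\norm{x}$ is immediate and the $L^\infty(\Omega)$-module property of $T$ transfers to $S_x$.

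The crucial intermediate step is a scalar classification: any module map $S\colon L^2(\Omega)\to L^2_\sigma(\Omega;W_2^*)$ has the form $S(\theta)=\theta g$ for a unique $g\in L^\infty_\sigma(\Omega;W_2^*)$, with $\norm{g}_\infty=\norm{S}$. In the case $\mu(\Omega)<\infty$ I would set $g=S(\chi_\Omega)\in L^2_\sigma(\Omega;W_2^*)$; the module identity applied to $\theta=\theta\cdot\chi_\Omega$ gives $S(\theta)=\theta g$ for bounded $\theta$, and the inequality
$$
\int_\Omega |\theta(t)|^2\,\bignorm{g(t)}_{W_2^*}^2\,\text{d}\mu(t)\,\leq\,\norm{S}^2\norm{\theta}_2^2
$$
for every $\theta\in L^2(\Omega)$, tested against indicator functions of level sets of $\norm{g(\cdot)}_{W_2^*}$, forces $\norm{g(t)}_{W_2^*}\leq\norm{S}$ almost everywhere. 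Extension from bounded $\theta$ to all of $L^2(\Omega)$ then follows by continuity. For the general $\sigma$-finite case, I would choose an increasing exhaustion $\Omega=\bigcup_n E_n$ with $\mu(E_n)<\infty$ and put $g_n=S(\chi_{E_n})$; the idempotence $\chi_{E_n}\chi_{E_m}=\chi_{E_{\min(n,m)}}$ together with the module property forces $g_n|_{E_m}=g_m|_{E_m}$ for $m\leq n$, and these pieces glue to a single $g\in L^\infty_\sigma(\Omega;W_2^*)$ with $\norm{g}_\infty\leq\norm{S}$.

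Applying this classification to each $S_x$ produces $g_x\in L^\infty_\sigma(\Omega;W_2^*)$ with $\norm{g_x}_\infty\leq\norm{T}\norm{x}$. Uniqueness of $g_x$ forces $x\mapsto g_x$ to be linear, hence an element of $B\bigl(W_1,L^\infty_\sigma(\Omega;W_2^*)\bigr)$ of norm at most $\norm{T}$. Corollary \ref{CoroDuality} then delivers a function $\Delta\in L^\infty_\sigma(\Omega;B(W_1,W_2^*))$ with $\norm{\Delta}_\infty\leq\norm{T}$ such that $[\Delta(t)](x)=g_x(t)$ for almost every $t$, for each $x\in W_1$. The identity $T=M_\Delta$ would first be verified on simple tensors $\theta\cdot x$ directly from the construction, and then extended to all of $L^2(\Omega;W_1)$ by linearity, by the density of $L^2(\Omega)\otimes W_1$, and by the continuity of $T$ and of $M_\Delta$ (using (\ref{Multiplication2})).

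The main obstacle I anticipate is the pointwise $L^\infty_\sigma$-bound in the scalar classification: passing from the operator-norm bound on $S$ to an essential-supremum bound on $\norm{g(\cdot)}_{W_2^*}$ must avoid any appeal to the Radon--Nikodym property of $W_2^*$ (which may fail, cf.~Remark \ref{RNP}), so the bound has to be extracted purely from the $L^2$ inequality tested against scalar level-set indicators. A secondary technical point is to confirm that the $w^*$-measurable representative $\Delta$ produced abstractly by Corollary \ref{CoroDuality} does realize the pointwise identity $[\Delta(t)](x)=g_x(t)$ almost everywhere for each individual $x$, up to an $x$-dependent null set; this is essentially the transposition of the duality pairing in Corollary \ref{CoroDuality} and is routine once the correspondence there is tracked carefully.
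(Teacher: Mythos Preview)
Your proposal is correct, but it follows a different route from the paper's proof. The paper reduces further: rather than classifying module maps $L^2(\Omega)\to L^2_\sigma(\Omega;W_2^*)$, it pairs with a second vector $y\in W_2$ to obtain, for each pair $(x,y)\in W_1\times W_2$, a \emph{scalar} module map $L^2(\Omega)\to L^2(\Omega)$, and then invokes only the classical fact that the commutant of $L^\infty(\Omega)$ acting on $L^2(\Omega)$ is $L^\infty(\Omega)$ itself. This yields a bounded bilinear map $\widehat{T}\colon W_1\times W_2\to L^\infty(\Omega)$, and the chain of identifications
$$
B_2(W_1\times W_2,L^\infty(\Omega))=B(L^1(\Omega),B(W_1,W_2^*))=L^\infty_\sigma(\Omega;B(W_1,W_2^*))
$$
(via (\ref{biliproj}), (\ref{dualproj}), (\ref{DP})) produces $\Delta$ directly. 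Your approach, by contrast, first establishes a genuinely vector-valued classification --- module maps $L^2(\Omega)\to L^2_\sigma(\Omega;W_2^*)$ are multiplication by elements of $L^\infty_\sigma(\Omega;W_2^*)$ --- via the constructive $g=S(\chi_\Omega)$ argument and a level-set test, and only afterwards invokes Corollary~\ref{CoroDuality} to handle the $W_1$ variable. The paper's route is slightly more economical in that it never needs this vector-valued intermediate step, relying instead on a purely scalar commutant fact; your route has the virtue of giving an explicit formula for the multiplier in the scalar-input case. Both are valid, and the level-set argument you outline for the essential bound on $\norm{g(\cdot)}_{W_2^*}$ is exactly right and indeed requires no Radon--Nikodym hypothesis on $W_2^*$.
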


\begin{proof} In the scalar case $(W_1=W_2=\Cdb)$ 
this is an elementary result; the proof
consists in reducing to this scalar case.

We define a bilinear map
$\widehat{T}\colon W_1\times W_2\to B(L^2(\Omega))$ by the following formula.
For any $w_1\in W_1$, $w_2\in W_2$ and $x\in L^2(\Omega)$, we set
$$
\bigl[\widehat{T}(w_1,w_2)\bigr](x) = 
\bigl\{t \mapsto \bigl\langle 
\bigl[T(x\otimes w_1)\bigr](t),w_2\bigr\rangle\bigr\}.
$$
Recall the identification
$L^2_\sigma(\Omega;W_2^*)=L^2(\Omega;W_2)^*$ from Theorem \ref{Lp-dual}. 
If we consider $T$ as a map from $L^2(\Omega;W_1)$ into $L^2(\Omega;W_2)^*$, then 
we have
\begin{equation}\label{T}
\bigl\langle T(x\otimes w_1), y\otimes w_2\bigr\rangle
= \int_\Omega \Bigl(\bigl[\widehat{T}(w_1,w_2)
\bigr](x)\Bigr)(t)\, y(t) \,\text{d}\mu(t)
\end{equation}
for any $w_1\in W_1$, $w_2\in W_2$, $x\in L^2(\Omega)$ and $y\in L^2(\Omega)$.

Further for any $\theta\in L^\infty(\Omega)$ and $x\in L^2(\Omega)$, we have
\begin{align*}
\bigl[\widehat{T}(w_1,w_2)\bigr](\theta x) &= 
\bigl\langle 
\bigl[T(\theta(x\otimes w_1))\bigr](\cdotp), w_2\bigr\rangle\\
& = \bigl\langle 
\theta(\cdotp)\,\bigl[T(x\otimes w_1)\bigr](\cdotp), w_2\bigr\rangle\\
& = \theta\bigl[\widehat{T}(w_1,w_2)\bigr](x),
\end{align*}
because $T$ is a module map. Hence $\widehat{T}(w_1,w_2)$ is a module map.

Let us identify $L^\infty(\Omega)$ with the von Neumann
subalgebra of $B(L^2(\Omega))$ consisting of multiplication
operators. The above property shows that 
$\widehat{T}(w_1,w_2)$ is such a multiplication operator for
any  
$w_1\in Z_1$ and $w_2\in Z_2$. Hence we may 
actually regard
$\widehat{T}$ as a bilinear map
$$
\widehat{T}\colon W_1\times W_2\longrightarrow L^\infty(\Omega).
$$
Now observe that applying (\ref{biliproj}), (\ref{dualproj}) and (\ref{DP}), 
we have isometric identifications
\begin{align*}
B_2(W_1\times W_2, L^\infty(\Omega)) & = 
B(W_1\overset{\wedge}{\otimes} W_2,L^\infty(\Omega)) \\
& = B(L^1(\Omega), (W_1\overset{\wedge}{\otimes} W_2)^*) \\
& = B(L^1(\Omega), B(W_1,W_2^*)) \\
& = 
L^\infty_\sigma\bigl(\Omega; B(W_1,W_2^*)\bigr).
\end{align*}
Let $\Delta\in L^\infty_\sigma\bigl(\Omega; B(W_1,W_2^*)\bigr)$ be 
corresponding to 
$\widehat{T}$ in this identification. Then we have 
$$
\bigl\langle [\Delta(t)](w_1),w_2\bigr\rangle
= \bigl(\widehat{T}(w_1,w_2)\bigr)(t),\qquad
w_1\in W_1,\,w_2\in W_2,\, t\in \Omega.
$$
Thus applying (\ref{T}) we obtain that 
\begin{align*}
\bigl\langle T(x\otimes w_1), y\otimes w_2\bigr\rangle
& =\int_{\Omega} \bigl\langle [\Delta(t)](w_1),w_2\bigr\rangle\, x(t)y(t)\, \text{d}\mu(t)\\
& = \bigl\langle M_\Delta (x\otimes w_1), y\otimes w_2\bigr\rangle
\end{align*}
for any $w_1\in W_1$, $w_2\in W_2$, $x\in L^2(\Omega)$ and $y\in L^2(\Omega)$.
By the density of $L^2(\Omega)\otimes W_1$ and $L^2(\Omega)\otimes W_2$
in $L^2(\Omega;W_1)$ and $L^2(\Omega;W_2)$, respectively, this implies that $T=M_\Delta$.
\end{proof}

\begin{proof}[Proof of Theorem \ref{theofacto}.]
This proof should be regarded as a module version of the proof of \cite[Theorem 3.4]{PisierBook}.
As in this book we adopt the following notation. For any finite families $(f_j)_j$
and $(e_i)_i$ in $E$, we write
$$
(f_j)_j < (e_i)_i 
$$
provided that
$$
\forall\,\eta\in E^*,\qquad \sum_j\vert
\eta(f_j)\vert^2\leq \sum_i\vert
\eta(e_i)\vert^2.
$$

In the sequel we simply write $L^2$ (resp. $L^\infty$) instead of $L^2(\Omega)$ 
(resp. $L^\infty(\Omega)$) as there is no risk of confusion. Then we set
$$
V=L^2 \otimes E\subset L^2(\Omega;E).
$$

We fix some  $\phi\in L^\infty_\sigma\bigl(\Omega;\Gamma_2(E,F^*)\bigr)$ and we
let $C=\norm{\phi}_\infty$. Then $\phi$ is an element of
$L^\infty_\sigma\bigl(\Omega;B(E,F^*)\bigr)$. Hence according to (\ref{Multiplication})
we may consider the multiplication operator
$$
T = M_\phi\colon  L^2(\Omega;E)\longrightarrow L^2_\sigma(\Omega; F^*).
$$

We let $I= L^\infty\times E^*$. A generic element of $I$ will be denoted by 
$\zeta=(\theta,\eta)$, with $\theta\in L^\infty$ and $\eta\in E^*$.

For any $v=\sum_s x_s\otimes e_s\in V\,$ (finite sum)  
and $\zeta=(\theta,\eta)\in I$, we set
$$
\zeta\cdotp v = \sum_s \eta(e_s) \theta x_s \,\in\, L^2.
$$

\begin{lemma}\label{ineq2} 
Let $(w_j)_j$ and $(v_i)_i$ be finite families in $V$ such that
\begin{equation}\label{dom}
\forall\,\zeta\in I,\qquad
\sum_j\norm{\zeta\cdotp w_j}^2_2\leq 
\sum_i\norm{\zeta\cdotp v_i}^2_2.
\end{equation}
Then
\begin{equation}\label{dom1}
\sum_j\norm{T(w_j)}^2_2\leq C^2
\sum_i\norm{v_i}^2_2.
\end{equation}
\end{lemma}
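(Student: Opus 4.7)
The plan is to derive the estimate pointwise in $t \in \Omega$. First I would rewrite the hypothesis in terms of the values of $w_j, v_i$ viewed as $E$-valued functions on $\Omega$, then use separability to upgrade it to a pointwise domination in $E$, and finally apply a Hilbert-space factorization of $\phi(t)$ at each $t$.

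For $v = \sum_s x_s \otimes e_s \in V$, viewing $v$ as a function $v \colon \Omega \to E$, one has $(\zeta \cdot v)(t) = \theta(t)\,\eta(v(t))$ for $\zeta = (\theta, \eta) \in I$, so $\norm{\zeta\cdot v}_2^2 = \int_\Omega |\theta(t)|^2 |\eta(v(t))|^2\, d\mu(t)$. The hypothesis (\ref{dom}) thus says that for every $\theta \in L^\infty$ and $\eta \in E^*$,
\begin{equation*}
\int_\Omega |\theta(t)|^2 \Bigl(\sum_j |\eta(w_j(t))|^2 - \sum_i |\eta(v_i(t))|^2\Bigr)\, d\mu(t) \leq 0.
\end{equation*}
Since $\theta$ ranges over all of $L^\infty$, testing against characteristic functions forces, for each fixed $\eta$, $\sum_j |\eta(w_j(t))|^2 \leq \sum_i |\eta(v_i(t))|^2$ off a null set $N_\eta$. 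The vectors involved all lie in a common finite-dimensional subspace $E_0 \subset E$, so $E_0^*$ is separable; picking a countable dense sequence $(\eta_n)$ in $E_0^*$, extending each to $E^*$ by Hahn-Banach, and setting $N = \bigcup_n N_{\eta_n}$, we obtain a single null set outside of which $(w_j(t))_j < (v_i(t))_i$ in $E$.

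By definition of $\norm{\phi}_\infty = C$ in $L^\infty_\sigma(\Omega; \Gamma_2(E, F^*))$, one has $\gamma_2(\phi(t)) \leq C$ for a.e. $t$. Fix such a $t$ outside $N$ and $\epsilon > 0$, and choose a factorization $\phi(t) = \beta_t \alpha_t$ with $\alpha_t \colon E \to H_t$, $\beta_t \colon H_t \to F^*$, $\norm{\alpha_t} \leq 1$, $\norm{\beta_t} \leq C + \epsilon$. Expanding over an orthonormal basis $(e_k)$ of $H_t$ and applying the pointwise domination with $\eta = \alpha_t^* e_k \in E^*$,
\begin{align*}
\sum_j \bignorm{\alpha_t(w_j(t))}^2 &= \sum_k \sum_j |\langle w_j(t), \alpha_t^* e_k\rangle|^2 \\
&\leq \sum_k \sum_i |\langle v_i(t), \alpha_t^* e_k\rangle|^2 \,=\, \sum_i \bignorm{\alpha_t(v_i(t))}^2 \,\leq\, \sum_i \norm{v_i(t)}^2,
\end{align*}
hence $\sum_j \norm{\phi(t)(w_j(t))}^2 \leq (C+\epsilon)^2 \sum_i \norm{v_i(t)}^2$. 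Letting $\epsilon \to 0$ and integrating against $d\mu(t)$ yields (\ref{dom1}), since $\norm{T(w_j)}_2^2 = \int \norm{\phi(t)(w_j(t))}^2 \,d\mu(t)$ and $\norm{v_i}_2^2 = \int \norm{v_i(t)}^2 \,d\mu(t)$.

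The step that requires care is the uniform choice of null set in the passage from the hypothesis "(\ref{dom}) for each $(\theta,\eta)$" to the pointwise domination $(w_j(t))_j<(v_i(t))_i$; this is where the reduction to the separable finite-dimensional space $E_0$ is essential. The subsequent factorization argument is a routine application of the Hilbert-space factorization characterizing $\gamma_2$, and crucially no measurable selection for $t \mapsto (\alpha_t, \beta_t)$ is needed because only the resulting numerical inequality at each $t$ enters the final integration.
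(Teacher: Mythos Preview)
Your proof is correct and follows essentially the same approach as the paper: both reduce the integral hypothesis to the pointwise domination $(w_j(t))_j < (v_i(t))_i$ for a.e.\ $t$ via the finite-dimensionality of the span $E_0$ and a countable dense set in $E_0^*$, and then exploit $\gamma_2(\phi(t))\leq C$ to obtain the pointwise inequality $\sum_j\norm{[\phi(t)](w_j(t))}^2\leq C^2\sum_i\norm{v_i(t)}^2$ before integrating. The only cosmetic difference is that the paper invokes the implication (i)$\Rightarrow$(iii) of \cite[Theorem~3.4]{PisierBook} for this last step, whereas you reprove it directly via an explicit factorization $\phi(t)=\beta_t\alpha_t$ and an orthonormal expansion.
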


\begin{proof}
Let $(w_j)_j$ and $(v_i)_i$ be finite families in $V$ and assume (\ref{dom}). 
Consider $e_{i,s}, f_{j,s}$ in $E$,
$x_{i,s}, y_{j,s}$ in $L^2$ such that 
$$
v_i = \sum_s x_{i,s}\otimes e_{i,s}
\qquad\hbox{and}\qquad
w_j = \sum_s y_{j,s}\otimes f_{j,s}.
$$
Let $\zeta=(\theta,\eta)\in I$. For any $j$,
$$
\norm{\zeta\cdotp w_j}^2_2\,=\,\int_{\Omega}\Bigl\vert
\sum_s\eta(f_{j,s})\theta(t)y_{j,s}(t)\Bigr\vert^2\, \text{d}\mu(t)\,.
$$
Hence
$$
\sum_j\norm{\zeta\cdotp w_j}^2_2\,=\, \int_{\Omega}\vert \theta(t)\vert^2
\Bigl(\sum_j\Bigl\vert\sum_s \eta(f_{j,s})y_{j,s}(t)\Bigr\vert^2\Bigr)\, \text{d}\mu(t)\,.
$$
Likewise,
$$
\sum_i\norm{\zeta\cdotp v_i}^2_2\,=\, \int_{\Omega}\vert \theta(t)\vert^2
\Bigl(\sum_i\Bigl\vert\sum_s \eta(e_{i,s})x_{i,s}(t)\Bigr\vert^2\Bigr)\, \text{d}\mu(t)\,.
$$
Thus by (\ref{dom}), we have
\begin{equation}\label{sep}
\int_{\Omega}\vert \theta(t)\vert^2
\Bigl(\sum_j\bigl\vert\eta\bigl(w_j(t)\bigr)\bigr\vert^2\Bigr)\,
\text{d}\mu(t)\,\leq
\int_{\Omega}\vert \theta(t)\vert^2\Bigl(\sum_i\bigl\vert\eta
\bigl(v_i(t)\bigr)\bigr\vert^2\Bigr)\,
\text{d}\mu(t)\,.
\end{equation}

Let $E_1\subset E$ be the subspace spanned by the $e_{i,s}$ and $f_{j,s}$. 
Since it is finite dimensional,
its dual space is obviously separable. Let
$(\eta_n)_{n\geq 1}$ be a dense sequence of $E_1^*$ and
for any $n\geq 1$, extend $\eta_n$ to an element of $E^*$
(still denoted by $\eta_n$). Then for any
finite families $(f_j)_j$ and $(e_i)_i$ in $E_1$, we have 
$$
(y_j)_j<(x_i)_i\,\Longleftrightarrow\,\forall\, n\geq 1,\quad 
\sum_j\vert\eta_n(f_j)\vert^2\leq \sum_i\vert\eta_n(e_i)\vert^2.
$$
It follows from (\ref{sep}) that for almost every $t\in \Omega$, we have
$$
\sum_j\vert\eta_n\bigl(w_j(t)\bigr)\vert^2\leq \sum_i\vert\eta_n\bigl(v_i(t)\bigr)\vert^2
$$
for every $n\geq 1$. Since the functions $v_i,w_j$ are valued in $E_1$, this implies that
$$
(w_j(t))_j < (v_i(t))_i\qquad\hbox{for a.e.}\ t\in \Omega.
$$
By the implication `(i) $\Rightarrow$ (iii)' of  \cite[Theorem 3.4]{PisierBook}, 
this property 
implies that for a.e. $t\in\Omega$,
$$
\sum_j\bignorm{[\phi(t)]\bigl(w_j(t)\bigr)}^2_{F^*}\leq 
C^2 \sum_i\bignorm{v_i(t)}^2_E.
$$
Integrating this inequality on $\Omega$ yields $(\ref{dom1})$.
\end{proof}

We let $\Lambda$ be the set of all functions $g\colon I\to \Rdb$ for which there
exists a finite family $(v_i)_i$ in $V$ such that 
\begin{equation}\label{Lambda}
\forall\,\zeta\in I,\qquad
\vert g(\zeta)\vert\leq \sum_i\norm{\zeta\cdotp v_i}^2_2.
\end{equation}
This is a real vector space. We let $\Lambda_+$ denote  
its positive part, i.e. the set of all functions $I\to\Rdb_+\,$ belonging to 
$\Lambda$. This is a convex cone.
For any $g\in\Lambda$ we set
$$
p(g)=C^2\inf\Bigl\{\sum_i\norm{ v_i}^2_2\Bigr\},
$$
where the infimum runs over all finite families $(v_i)_i$ in $V$ satisfying (\ref{Lambda}).
It is easy to check that $p$ is sublinear, that is, 
$p(g + g')\leq p(g) + p(g')$ for any $g,g'\in \Lambda$ and 
$p(tg)=tp(g)$ for any $g\in\Lambda$ and any $t\geq 0$.

Next for any $g\in\Lambda_+$, we set
$$
q(g)=\sup\Bigl\{\sum_j\norm{T(w_j)}^2_2\Bigr\},
$$
where the supremum runs over all finite families $(w_j)_j$ in $V$ satisfying 
\begin{equation}\label{Lambda+}
\forall\zeta\in I,\qquad g(\zeta)\geq \sum_j \norm{\zeta\cdotp w_j}^2_2.
\end{equation}
It is easy to check that $q$ is superlinear, that is,
$q(g) + q(g')\leq q(g + g')$ for any $g,g'\in \Lambda_+$ and 
$q(tg)=tq(g)$ for any $g\in\Lambda_+$ and any $t\geq 0$.

By Lemma $\ref{ineq2}$,
$q\leq p$ on $\Lambda_+$. Hence by the Hahn-Banach Theorem 
given in \cite[Corollary 3.2]{PisierBook}, 
there exists a positive linear functional $\ell\colon\Lambda\to\Rdb\,$ such that
\begin{equation}\label{sep1}
\forall\, g\in\Lambda,\qquad \ell(g)\leq p(g)
\end{equation}
and
\begin{equation}\label{sep2}
\forall\, g\in\Lambda_+,\qquad q(g)\leq \ell(g).
\end{equation}

Following \cite[Chapter 8]{PisierBook}, we introduce a Hilbert space
$$
\Lambda_2(I,\ell; L^2)
$$
defined as follows. First we let $\Ll(I,\ell;L^2)$ be the 
set of all functions $G\colon I\to L^2$
such that the $\Rdb$-valued function 
$\zeta\mapsto\norm{G(\zeta)}^2_2$ belongs to $\Lambda$
and we set $N(G)=\bigl(\ell(\zeta\mapsto\norm{G(\zeta)}^2_2)
\bigr)^{\frac12}$
for any such function. Then $\Ll(I,\ell;L^2)$ is a complex vector space and
$N$ is a Hilbertian seminorm on $\Ll(I,\ell;L^2)$. Hence the quotient of
$\Ll(I,\ell;L^2)$ by the kernel of $N$ is a pre-Hilbert space.
By definition, $\Lambda_2(I,\ell; L^2)$ is the completion of this
quotient space.

For any $v\in V$, the function $\zeta\mapsto\zeta\cdotp v$ belongs to 
$\Ll(I,\ell;L^2)$.
Then we define a linear map
$$
T_1\colon V\longrightarrow \Lambda_2(I,\ell; L^2)
$$
as follows: for any $v\in V$, $T_1(v)$ is the class of $\zeta\mapsto\zeta\cdotp v$
modulo the kernel of $N$. 
Then we have
\begin{align*}
\norm{T_1(v)}^2_L &  = \ell\bigl(\zeta\mapsto \norm{\zeta\cdotp v}^2\bigr)\\
& \leq p\bigl(\zeta\mapsto \norm{\zeta\cdotp v}^2_2\bigr)\\
& \leq C^2\norm{v}^2_2
\end{align*}
by $(\ref{sep1})$ and the definition of $p$.
Hence $T_1$ uniquely extends to a bounded operator 
$$
T_1\colon L^2(\Omega; E)\longrightarrow \Lambda_2(I,\ell; L^2),\qquad\hbox{with}\ \norm{T_1}\leq C.
$$

For any $v\in V$, we have
$$
\norm{T(v)}^2_2\leq q\bigl(\zeta\mapsto \norm{\zeta\cdotp v}^2\bigr)\leq 
\ell\bigl(\zeta\mapsto \norm{\zeta\cdotp v}^2\bigr) = \norm{T_1(v)}^2.
$$
The resulting inequality $\norm{T(v)}_2\leq \norm{T_1(v)}$
implies the existence of a (necessarily unique) bounded linear operator
$$
T_2\colon \overline{T_1(V)}\longrightarrow L^2_\sigma(\Omega;F^*),
\qquad\hbox{with}\ \norm{T_2}\leq 1,
$$
such that 
\begin{equation}\label{D}
\forall\, v\in V,\qquad T(v) = T_2\bigl(T_1(v)\bigr).
\end{equation}
(Here and later on in the paper, $\overline{T_1(V)}\subset 
\Lambda_2(I,\ell; L^2)$ denotes the closure of $T_1(V)$.)

For any $v\in V$ and any $\theta\in L^\infty$, we have
\begin{equation}\label{T1}
\norm{T_1(\theta v)}\leq \norm{\theta}_\infty\norm{T_1(v)}.
\end{equation}
Indeed write $v=\sum_s x_s\otimes e_s\,$, with $e_s\in E$ 
and $x_s\in L^2$. For any $\gamma\in L^\infty$ and 
$\eta\in E^*$, we have
$$
\Bignorm{\sum_s \eta(e_s)\gamma \theta x_s}_{2}\leq \norm{\theta}_\infty
\Bignorm{\sum_s \eta(e_s) \gamma x_s}_{2}.
$$
Hence $\norm{\zeta\cdotp(\theta v)}\leq \norm{\theta}_\infty 
\norm{\zeta\cdotp v}$ for any $\zeta=(\gamma,\eta)
\in I$. Since the functional 
$\ell$ is positive on $\Lambda$, this implies that
$\ell\bigl(\zeta\mapsto \norm{\zeta\cdotp (\theta v)}^2\bigr)
\leq \norm{\theta}_\infty^2 \ell\bigl(
\zeta\mapsto \norm{\zeta\cdotp v}^2\bigr)$,
which yields (\ref{T1}).

This inequality implies the existence of 
a (necessarily unique) linear contraction
$$
\pi\colon L^\infty\longrightarrow \B\bigl(\overline{T_1(V)}\bigr),
$$
such that 
\begin{equation}\label{commute}
T_1(\theta v) =\pi(\theta)T_1(v),
\qquad v\in L^2(\Omega;E),\, \theta\in L^\infty.
\end{equation}
It is clear that $\pi$ is a unital homomorphism. This implies that
$\pi$ is a  $*$-representation. Indeed for any unitary $\theta\in L^\infty$,
we have $I = \pi(\overline{\theta}\theta)=
\pi(\overline{\theta})\pi(\theta)=\pi(\theta)\pi(\overline{\theta})$
and the two operators $\pi(\overline{\theta})$ and $\pi(\theta)$ are contractions.
This implies that $\pi(\theta)$ is a unitary and that 
\begin{equation}\label{*}
\pi(\theta)^*=\pi(\overline{\theta})
\end{equation}
Since unitaries generate $L^\infty$, (\ref{*}) actually holds true for any 
$\theta\in L^\infty$.

Let $\theta\in L^\infty$ and assume that $(\theta_\iota)_\iota$ 
is a bounded net of $L^\infty$
converging to $\theta$ in the $w^*$-topology. 
For any $x\in L^2$, $\theta_\iota x\to \theta x$ in $L^2$
(this uses the boundedness of the net).
By the continuity of $T_1$ this implies that 
for any $e\in E$, $T_1(\theta_\iota x\otimes e)\to
T_1(\theta x\otimes e)$ in $\overline{T_1(V)}$. By linearity, this implies
that for any $v\in V$, $T_1(\theta_\iota v)\to
T_1(\theta v)$ in $\overline{T_1(V)}$. In other words,
$\pi(\theta_\iota)(h)\to \pi(\theta)(h)$ for any $h\in T_1(V)$. 
Since the net $(\pi(\theta_\iota))_\iota$ is bounded, this implies that 
$\pi(\theta_\iota)\to \pi(\theta)$ strongly. Hence $\pi$ is a $w^*$-continuous
$*$-representation.

Recall that $E$ and $L^2$ are assumed separable, hence 
the Hilbert space $\overline{T_1(V)}$ is separable. 
By Lemma $\ref{multip}$, there exists a 
separable Hilbert space $H$ and an isometric embedding
$\rho\colon \overline{T_1(V)}\hookrightarrow L^2(\Omega;H)$ such that 
$\rho\pi(\theta)=M_\theta\rho\,$ for any $\theta\in L^\infty$. 
Then for any such $\theta$ and any $v\in L^2(\Omega;E)$, we have
$$
\rho T_1 (\theta v) = \bigl[\rho \pi(\theta) T_1\bigr](v)  
= \theta \rho(T_1(v)),
$$
by (\ref{commute}). This shows that the composed map
$$
S_1 = \rho T_1\colon L^2(\Omega;E)\longrightarrow L^2(\Omega;H)\quad\hbox{is a module map}.
$$

Define 
$$
S_2 = T_2\rho^* \colon L^2(\Omega;H) \longrightarrow  L^2_\sigma(\Omega;F^*).
$$
Let $\theta\in L^\infty(\Omega)$. For any $v\in V$, we have
$$
\bigl[T_2\pi(\theta)\bigr](T_1(v)) = T_2 T_1 (\theta v) =T(\theta v) =\theta T(v)
=\theta T_2(T_1(v))
$$
by (\ref{commute}),  (\ref{D}) and the fact that $T$ is a module map. This shows that
$$
T_2\pi(\theta) =  M_\theta T_2.
$$
Further we have 
$\rho^*M_\theta = \bigl(M_{\overline{\theta}}\rho\bigr)^* = \bigl(\rho\pi(\overline{\theta})\bigr)^*=
\pi(\theta)\rho^*$. Hence $M_\theta S_2 = S_2 M_\theta$, that is, 
$$
S_2 \quad\hbox{is a module map}.
$$

Since $\rho^*\rho$ is equal to the identity of $\overline{T_1(V)}$, 
it follows from (\ref{D}) that 
$$
T= S_2 S_1.
$$
Thus we have constructed a `module Hilbert space factorization' of $T$, 
and this is the main point.

To conclude, let $S_{2*}\colon L^2(\Omega;F)\to L^2(\Omega;H^*)$ be the 
restriction of the adjoint of $S_2$ to $L^2(\Omega;F)$. Then 
$S_{2*}$ is a module map. Now
apply Lemma $\ref{bimod}$ to $S_1$ and $S_{2*}$. 
Let $\alpha\in L^\infty_\sigma(\Omega;B(E,H))$
and $\beta\in L^\infty_\sigma(\Omega;B(F,H^*))$ such that 
$S_1$ is equal to the multiplication by $\alpha$ and 
$S_{2*}$ is equal to the multiplication by $\beta$. 
Given any $e\in E$ and $f\in F$, we have
\begin{align*}
\int_{\Omega}\bigl\langle\bigl[\phi(t)](e),f\bigr\rangle\, 
x(t) y(t)\, \text{d}\mu(t)\, 
& =\,\bigl\langle T(x\otimes e),y\otimes f\bigr\rangle\\
& =\,\langle S_1(x\otimes e),S_{2*}(y\otimes f)\bigr\rangle\\
& =\,\int_\Omega\bigl\langle[\alpha(t)](e)\, x(t),
[\beta(t)](f)\, y(t)\bigr\rangle\, \text{d}\mu(t)\\
& =\,\int_\Omega\bigl\langle[\alpha(t)](e),
[\beta(t)](f)\bigr\rangle\,x(t) y(t) \,\text{d}\mu(t) 
\end{align*}
for any $x,y\in L^2$. Applying identification
between $H^*$ and $H$, this proves $(\ref{factor})$.
By construction, $\norm{\alpha}_\infty\leq C$ and 
$\norm{\beta}_\infty\leq 1$.
\end{proof}

\subsection{A special case: Schur multipliers} \label{Special}

Let $(\Omega_1,\mu_1)$,$(\Omega_2,\mu_2)$ and $(\Omega_3,\mu_3)$
be three separable measure spaces. We are going to  apply 
Theorem \ref{theofacto} with $(\Omega,\mu)=(\Omega_2,\mu_2)$,
$E=L^1(\Omega_1)$ and $F=L^1(\Omega_3)$.

To any $\phi\in L^\infty(\Omega_1\times\Omega_2\times\Omega_3)$, 
one may associate
$\widetilde{\phi}\in
L^\infty_\sigma\bigl(\Omega_2; B(L^1(\Omega_1),L^\infty(\Omega_3))\bigr)$ 
as follows.
For any $r\in L^1(\Omega_1)$,
\begin{equation}\label{tildephi}
\bigr[\widetilde{\phi}(t_2)\bigr](r)
= \int_{\Omega_1} \phi(t_1,t_2,\cdotp)\,r(t_1)\,\text{d}\mu_1(t_1),
\qquad t_2\in \Omega_2.
\end{equation}
According to the obvious identification
$$
L^\infty(\Omega_1\times\Omega_2\times\Omega_3) = 
L^\infty_\sigma\bigl(\Omega_2; L^\infty(\Omega_1\times\Omega_3)\bigr)
$$
and (\ref{IntForm}), the mapping $\phi\mapsto\widetilde{\phi}$ induces a
$w^*$-homeomorphic isometric identification
$$
L^\infty(\Omega_1\times\Omega_2\times\Omega_3)=
L^\infty_\sigma\bigl(\Omega_2; B(L^1(\Omega_1),L^\infty(\Omega_3))\bigr),
$$
By Remark \ref{WeakTensorisation}, the $w^*$-continuous contractive 
embedding of $\Gamma_2 
(L^1(\Omega_1),L^\infty(\Omega_3))$ into the space 
$B(L^1(\Omega_1),L^\infty(\Omega_3))$
induces a $w^*$-continuous contractive embedding
$$
L^\infty_\sigma\bigl(\Omega_2; \Gamma_2(L^1(\Omega_1),L^\infty(\Omega_3))\bigr)
\subset
L^\infty_\sigma\bigl(\Omega_2; B(L^1(\Omega_1),L^\infty(\Omega_3))\bigr).
$$
Combining with the preceding identification we obtain a further 
$w^*$-continuous contractive embedding
\begin{equation}\label{Meaning}
L^\infty_\sigma\bigl(\Omega_2; \Gamma_2(L^1(\Omega_1),L^\infty(\Omega_3))\bigr)
\subset L^\infty(\Omega_1\times\Omega_2\times\Omega_3).
\end{equation}
According to this, we will write
$\phi\in L^\infty_\sigma\bigl(\Omega_2; 
\Gamma_2(L^1(\Omega_1),L^\infty(\Omega_3))\bigr)$ 
when $\widetilde{\phi}$ actually belongs to that space.
In this case, for the sake of clarity, we let 
$$
\norm{\phi}_{\infty,\Gamma_2}
$$
denote its norm as an element of $L^\infty_\sigma\bigl(\Omega_2; 
\Gamma_2(L^1(\Omega_1),L^\infty(\Omega_3))\bigr)$. It is greater than or equal to
its norm as an element of $L^\infty(\Omega_1\times\Omega_2\times\Omega_3)$.

\begin{theorem}\label{factoL1} 
Let $\phi\in L^\infty(\Omega_1\times\Omega_2\times\Omega_3)$ and $C\geq 0$.
Then $\phi\in
L^\infty_\sigma\bigl(\Omega_2; \Gamma_2(L^1(\Omega_1),L^\infty(\Omega_3))\bigr)$ 
and $\norm{\phi}_{\infty,\Gamma_2}\leq C$ if and only if 
there 
exist a separable Hilbert space $H$ and two functions
$$
a \in L^\infty\bigl(\Omega_1\times\Omega_2;H\bigr)
\qquad\hbox{and}\qquad
b \in L^\infty\bigl(\Omega_2\times\Omega_3;H\bigr)
$$
such that $\norm{a}_\infty\norm{b}_\infty\leq C$ and
\begin{equation}\label{equal}
\phi(t_1,t_2,t_3) = \bigl\langle a(t_1,t_2), b(t_2,t_3)\bigr\rangle
\qquad\hbox{for a.e.}\ (t_1,t_2,t_3)\in \Omega_1\times\Omega_2\times\Omega_3.
\end{equation}
\end{theorem}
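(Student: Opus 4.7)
The plan is to deduce both directions from the operator-valued factorization theorem, Theorem \ref{theofacto}, combined with the canonical identifications between $w^*$-measurable operator-valued $L^\infty$ functions and genuine vector-valued $L^\infty$ functions on product spaces.

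For the easier direction $(b)\Rightarrow(a)$, given $a$ and $b$, I would define for each $t_2\in\Omega_2$ the operators
$$
[\alpha(t_2)](r)=\int_{\Omega_1}r(t_1)a(t_1,t_2)\,\text{d}\mu_1(t_1),\qquad [\beta(t_2)](s)=\int_{\Omega_3}s(t_3)b(t_2,t_3)\,\text{d}\mu_3(t_3),
$$
valued in $H$. A direct computation using Fubini in (\ref{tildephi}) yields $\widetilde\phi(t_2)=\beta(t_2)^*\circ\alpha(t_2)$ after identifying $H^*=H$, so $\widetilde\phi(t_2)\in\Gamma_2(L^1(\Omega_1),L^\infty(\Omega_3))$ with $\gamma_2(\widetilde\phi(t_2))\leq\norm{a(\cdot,t_2)}_\infty\norm{b(t_2,\cdot)}_\infty$. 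Taking essential supremum gives $\norm{\phi}_{\infty,\Gamma_2}\leq\norm{a}_\infty\norm{b}_\infty\leq C$, and $w^*$-measurability of $t_2\mapsto\widetilde\phi(t_2)$ into $\Gamma_2$ reduces, by $w^*$-density of elementary tensors in the predual of $\Gamma_2$, to checking measurability of $t_2\mapsto\langle[\widetilde\phi(t_2)](r),s\rangle$, which is immediate from the above formulas.

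For the nontrivial direction $(a)\Rightarrow(b)$, I apply Theorem \ref{theofacto} with $(\Omega,\mu)=(\Omega_2,\mu_2)$, $E=L^1(\Omega_1)$ and $F=L^1(\Omega_3)$. This provides a separable Hilbert space $H$ and functions $\alpha\in L^\infty_\sigma(\Omega_2;B(L^1(\Omega_1),H))$, $\beta\in L^\infty_\sigma(\Omega_2;B(L^1(\Omega_3),H))$ with $\norm{\alpha}_\infty\norm{\beta}_\infty\leq C$, satisfying, for each fixed $(r,s)\in L^1(\Omega_1)\times L^1(\Omega_3)$,
$$
\bigl\langle [\widetilde\phi(t_2)](r),s\bigr\rangle=\bigl\langle[\alpha(t_2)](r),[\beta(t_2)](s)\bigr\rangle\quad\text{for a.e. }t_2\in\Omega_2.
$$
The crucial translation step is to identify $\alpha$ with an element $a\in L^\infty(\Omega_1\times\Omega_2;H)$. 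Since every Hilbert space has the Radon-Nikodym property (Remark \ref{RNP}), Theorem \ref{Lp-dual} combined with (\ref{L1tensorcor}) gives $B(L^1(\Omega_1),H)=L^\infty(\Omega_1;H)$ isometrically, and a Fubini-type identification in the spirit of Corollary \ref{CoroDuality} yields
$$
L^\infty_\sigma(\Omega_2;B(L^1(\Omega_1),H))=L^\infty(\Omega_1\times\Omega_2;H).
$$
Thus $\alpha$ corresponds to some $a\in L^\infty(\Omega_1\times\Omega_2;H)$ with $\norm{a}_\infty=\norm{\alpha}_\infty$ in such a way that $[\alpha(t_2)](r)=\int r(t_1)a(t_1,t_2)\,\text{d}\mu_1(t_1)$ for a.e. $t_2$; similarly for $b$. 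Specializing $(r,s)$ to characteristic functions of rectangles in the displayed identity and unfolding both sides via Fubini then yields equality of the integrals of $\phi(t_1,t_2,t_3)$ and $\langle a(t_1,t_2),b(t_2,t_3)\rangle$ over every such rectangle for a.e.\ $t_2$, which upgrades to (\ref{equal}) by a standard monotone class argument.

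The main obstacle will be the null-set bookkeeping in this second direction. Theorem \ref{theofacto} supplies the factorization identity only for each fixed $(r,s)$ outside an $(r,s)$-dependent null set in $\Omega_2$, whereas (\ref{equal}) demands pointwise a.e.\ equality on the full product $\Omega_1\times\Omega_2\times\Omega_3$. The measurability in the identification $L^\infty_\sigma(\Omega_2;B(L^1(\Omega_1),H))=L^\infty(\Omega_1\times\Omega_2;H)$ is automatic, but transferring the $(r,s)$-wise a.e.\ identities into a single pointwise a.e.\ identity requires choosing countable dense sequences in $L^1(\Omega_1)$ and $L^1(\Omega_3)$ (available by separability of the measure spaces) and carefully intersecting the resulting full-measure sets before invoking Fubini and monotone class arguments.
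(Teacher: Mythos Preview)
Your proposal is correct and follows essentially the same approach as the paper: apply Theorem \ref{theofacto} with $E=L^1(\Omega_1)$, $F=L^1(\Omega_3)$, $(\Omega,\mu)=(\Omega_2,\mu_2)$, and then pass between $L^\infty_\sigma(\Omega_2;B(L^1(\Omega_1),H))$ and $L^\infty(\Omega_1\times\Omega_2;H)$ via the chain of duality/tensor identifications (\ref{L1tensor}), (\ref{L1tensorcor}), (\ref{DP}) and Remark \ref{RNP}. Your discussion of the null-set bookkeeping and the $w^*$-measurability check in the converse direction merely spells out details that the paper leaves implicit; the paper handles the passage to (\ref{equal}) in one line by testing against all $r_1\in L^1(\Omega_1)$, $r_3\in L^1(\Omega_3)$ and invoking separability, which is exactly the argument you describe.
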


\begin{proof}
Assume that $\phi$ belongs to 
$L^\infty_\sigma\bigl(\Omega_2; \Gamma_2(L^1(\Omega_1),L^\infty(\Omega_3))\bigr)$, with 
$\norm{\phi}_{\infty,\Gamma_2}\leq C$.
According to Theorem \ref{theofacto}, there exist a Hilbert space $H$ and  two functions
$$
\alpha\in L^\infty_\sigma\bigl(\Omega_2;  B(L^1(\Omega_1),H)\bigr)
\qquad\hbox{and}\qquad
\beta\in L^\infty_\sigma\bigl(\Omega_2; B(L^1(\Omega_3),H)\bigr)
$$
such that for any $r_1\in L^1(\Omega_1)$ and $r_3\in L^1(\Omega_3)$,
\begin{equation}\label{THM2}
\bigl\langle [\widetilde{\phi}(t_2)](r_1),r_3\bigr\rangle\,=\,\bigl\langle
[\alpha(t_2)](r_1),[\beta(t_2)](r_3)\bigr\rangle\qquad
\hbox{for a.e.}\ t_2\in \Omega_2.
\end{equation}
By $(\ref{L1tensor})$, $(\ref{L1tensorcor})$ and (\ref{DP}) we have isometric identifications
\begin{align*}
L^\infty_\sigma\bigl(\Omega_2; B(L^1(\Omega_1),H)\bigr)\, &= 
L^\infty_\sigma\bigl(\Omega_2; (L^1(\Omega_1)\overset{\wedge}{\otimes} H^*)^*\bigr)\, \\
&= 
\bigl(L^1(\Omega_2)\overset{\wedge}{\otimes} L^1(\Omega_1)\overset{\wedge}{\otimes}H^*\bigr)^* \\ &=
L^1(\Omega_1\times\Omega_2; H^*)^* \\ &
= L^\infty_\sigma(\Omega_1\times\Omega_2; H).
\end{align*}
Moreover 
$L^\infty_\sigma(\Omega_1\times\Omega_2; H)=L^\infty(\Omega_1\times\Omega_2; H)$, see Remark \ref{RNP}.
Hence 
we finally have an isometric identification
$$
L^\infty_\sigma\bigl(\Omega_2; B(L^1(\Omega_1),H)\bigr)\,
=\, L^\infty(\Omega_1\times\Omega_2; H).
$$
Likewise we have
an isometric identification
$$
L^\infty_\sigma\bigl(\Omega_2; B(L^1(\Omega_3),H)\bigr)\,
=\, L^\infty(\Omega_2\times\Omega_3; H).
$$

Let $a\in L^\infty(\Omega_1\times\Omega_2; H)$ and $b\in L^\infty(\Omega_2\times\Omega_3; H)$
be corresponding to $\alpha$ and $\beta$ respectively in the above identifications.
Then for any $r_1\in L^1(\Omega_1)$,
$$
[\alpha(t_2)](r_1) =\,\int_{\Omega_1}a(t_1,t_2)\, r_1(t_1)\, \text{d}\mu_1(t_1)
\qquad 
\hbox{for a.e.}\ t_2\in\Omega_2.
$$
Likewise, for any $r_3\in L^1(\Omega_3)$,
$$
[\beta(t_2)](r_3) =\,\int_{\Omega_3} b(t_2,t_3)\, r_3(t_3)\, \text{d}\mu_3(t_3)
\qquad 
\hbox{for a.e.}\ t_2\in\Omega_2.
$$
Combining $(\ref{THM2})$ and $(\ref{tildephi})$ we deduce that for any $r_1\in L^1(\Omega_1)$ and
$r_3\in L^1(\Omega_3)$, we have
\begin{align*}
\int_{\Omega_1\times\Omega_3}\langle a(t_1,t_2),
b(t_2,t_3)\rangle\, & r_1(t_1)\,r_3(t_3)\, \text{d}\mu_1(t_1)\text{d}\mu_3(t_3)\\
& = 
\bigl\langle\bigl[\widetilde{\phi}(t_2)\bigr](r_1),r_3\bigr\rangle
\\ & = 
\int_{\Omega_1\times\Omega_3} \phi(t_1,t_2, t_3) r_1(t_1)\,r_3(t_3) 
\,\text{d}\mu_1(t_1)\text{d}\mu_3(t_3)
\end{align*}
for a.e. $t_2\in\Omega_2$.
This implies (\ref{equal}) and shows the `only if' part.

Assume conversely that (\ref{equal}) holds true for some $a$ in 
$L^\infty(\Omega_1\times\Omega_2;H)$ and some 
$b$ in $L^\infty(\Omega_1\times\Omega_2;H)$. Using the above identifications, we consider
$\alpha \in L^\infty_\sigma\bigl(\Omega_2; B(L^1(\Omega_1),H)\bigr)$ and
$\beta \in L^\infty_\sigma\bigl(\Omega_2; B(L^1(\Omega_3),H)\bigr)$ 
be corresponding
to $a$ and $b$, respectively. Then the above computations lead to (\ref{THM2}).
This identity means that for a.e. $t_2\in \Omega_2$, we have a Hilbert space factorisation 
$\widetilde{\phi}(t_2) = \beta(t_2)^*\alpha(t_2)$. This shows that 
$\phi\in L^\infty_\sigma\bigl(\Omega_2; \Gamma_2(L^1(\Omega_1),L^\infty(\Omega_3))\bigr)$, 
with $\norm{\phi}_{\infty,\Gamma_2}\leq
\norm{a}_\infty\norm{b}_\infty$.
\end{proof}

\vskip 1cm

\section{Characterization of $S^2\times S^2\to S^1$ boundedness}\label{mainresult}

Let $\mathcal{H}$ be a separable Hilbert space and let $A,B$ and $C$ be 
normal operators on $\mathcal{H}$.
Let $\lambda_A, \lambda_B$ and $\lambda_C$ 
be scalar-valued spectral measures associated with $A$, $B$ and $C$.
Recall the definition of the triple operator mapping $\Gamma^{A,B,C}$ from Theorem \ref{GammaABC}.
The purpose of this section is to characterize the functions 
$\phi\in L^\infty(\lambda_A\times\lambda_B\times\lambda_C)$
such that $\Gamma^{A,B,C}(\phi)$ maps $S^2(\mathcal{H})\times S^2(\mathcal{H})$
into $S^1(\mathcal{H})$.

We shall start with a factorization formula of independent 
interest. Let $\Gamma^{A,B}$ and $\Gamma^{B,C}$ be the double operator integral 
mappings associated respectively with $(A,B)$ and with $(B,C)$, 
see Proposition \ref{nintegrals}. As noted in Remark \ref{n=2},
$\Gamma^{A,B}$ and $\Gamma^{B,C}$ are $*$-representations.
Recall that they are $w^*$-continuous.

In the next statement we will consider the product $uv$ of a function
$u\in L^{\infty}(\lambda_A\times \lambda_B)$ and a function 
$v\in L^{\infty}(\lambda_B\times\lambda_C)$. The meaning is that we consider
$$
L^{\infty}(\lambda_A\times \lambda_B)\subset L^{\infty}(\lambda_A\times \lambda_B\times\lambda_C)
\qquad\hbox{and}\qquad
L^{\infty}(\lambda_B\times \lambda_C)\subset L^{\infty}(\lambda_A\times \lambda_B\times\lambda_C)
$$
in a canonical way and multiply $u$ and $v$ in this common bigger space.

\begin{lemma}\label{decomp}
Let $u\in L^{\infty}(\lambda_A\times \lambda_B)$ and 
$v\in L^{\infty}(\lambda_B\times\lambda_C)$. Then, for all $X,Y \in S^2(\mathcal{H})$, we have
$$
\Gamma^{A,B,C}(uv)(X,Y)=\Gamma^{A,B}(u)(X)\Gamma^{B,C}(v)(Y).
$$
\end{lemma}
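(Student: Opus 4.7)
The approach is the standard bootstrap: verify the identity on elementary tensors of simple functions, extend by linearity, and then close up in each variable by separate $w^*$-continuity.

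First, I would compute directly on generators. For $u = f_1\otimes f_2$ with $f_i \in \mathcal{E}_i$ and $v = g_2\otimes g_3$ with $g_2\in\mathcal{E}_2,\ g_3\in\mathcal{E}_3$, the product $uv$ equals $f_1\otimes (f_2 g_2)\otimes g_3$ in $\mathcal{E}_1\otimes\mathcal{E}_2\otimes\mathcal{E}_3$. By Theorem \ref{GammaABC} and the multiplicativity of $\pi_B$,
$$\Gamma^{A,B,C}(uv)(X,Y) = f_1(A)X(f_2g_2)(B)Yg_3(C) = \bigl[f_1(A)Xf_2(B)\bigr]\bigl[g_2(B)Yg_3(C)\bigr],$$
which is exactly $\Gamma^{A,B}(u)(X)\Gamma^{B,C}(v)(Y)$. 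By bilinearity in $(u,v)$, the identity holds for all $u\in\mathcal{E}_1\otimes\mathcal{E}_2$ and $v\in\mathcal{E}_2\otimes\mathcal{E}_3$.

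Next, I would extend in the variable $u$ by $w^*$-density. Fix $v\in\mathcal{E}_2\otimes\mathcal{E}_3$ and $X,Y,Z\in S^2(\H)$, and consider on $L^\infty(\lambda_A\times\lambda_B)$ the two functionals
$$\Phi_1(u) := \mathrm{tr}\bigl(\Gamma^{A,B,C}(uv)(X,Y)Z\bigr),\qquad \Phi_2(u) := \mathrm{tr}\bigl(\Gamma^{A,B}(u)(X)\,\Gamma^{B,C}(v)(Y)Z\bigr).$$
For $\Phi_1$: the map $u\mapsto uv$ into $L^\infty(\lambda_A\times\lambda_B\times\lambda_C)$ is $w^*$-continuous (being the adjoint of multiplication by $v$ on the $L^1$ side, after the canonical inclusion), $\Gamma^{A,B,C}$ is $w^*$-continuous by Theorem \ref{GammaABC}, and trace pairing against $X\otimes Y\otimes Z$ is a predual element for $B_2(S^2(\H)\times S^2(\H),S^2(\H))$. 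For $\Phi_2$: set $W:=\Gamma^{B,C}(v)(Y)Z$, which lies in $S^1(\H)$ as a product of two Hilbert--Schmidt operators; then $\Phi_2(u) = \mathrm{tr}(\Gamma^{A,B}(u)(X)W)$. By Proposition \ref{nintegrals} for $n=2$, $\Gamma^{A,B}\colon L^\infty(\lambda_A\times\lambda_B)\to B(S^2(\H))$ is $w^*$-continuous, so $u\mapsto\Gamma^{A,B}(u)(X)$ is $w^*$-to-weakly continuous into $S^2(\H)$; and pairing with $W\in S^1\subset S^2$ via the trace is weakly continuous on $S^2(\H)$. Since $\Phi_1=\Phi_2$ on the $w^*$-dense subspace $\mathcal{E}_1\otimes\mathcal{E}_2$, equality holds throughout $L^\infty(\lambda_A\times\lambda_B)$, and since $Z\in S^2(\H)$ was arbitrary the operator identity passes to $S^2(\H)$.

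Third, I would run the symmetric argument in the variable $v$. With $u\in L^\infty(\lambda_A\times\lambda_B)$ now fixed and $X,Y,Z\in S^2(\H)$, rewriting the right-hand side as $\mathrm{tr}(\Gamma^{B,C}(v)(Y)\,W')$ with $W':=Z\Gamma^{A,B}(u)(X)\in S^1(\H)$ yields the same $w^*$-continuity check and extends the identity to all $v\in L^\infty(\lambda_B\times\lambda_C)$. The only slightly delicate point is the $w^*$-continuity of $\Phi_2$ (and its counterpart in $v$): one must use that $w^*$-convergence in $B(S^2(\H))$ forces weak convergence of evaluations at a fixed vector in $S^2(\H)$, and that elements of $S^1(\H)$ induce weakly continuous functionals on $S^2(\H)$ via the trace. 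Everything else is routine from the multiplicativity of the Borel functional calculus and $w^*$-density of simple tensors.
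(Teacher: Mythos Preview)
Your proof is correct and follows essentially the same approach as the paper: verify the identity on elementary tensors using multiplicativity of the functional calculus, then extend in each variable separately by exploiting the $w^*$-continuity of $\Gamma^{A,B,C}$, $\Gamma^{A,B}$, and $\Gamma^{B,C}$, testing against $Z\in S^2(\H)$ via the trace. The only cosmetic difference is that the paper passes limits along $w^*$-convergent nets explicitly, while you phrase the same step as equality of two $w^*$-continuous functionals on a $w^*$-dense subspace; these are equivalent formulations.
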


\begin{proof}
Fix $X,Y \in S^2(\mathcal{H})$.
Let $u_1 \in L^{\infty}(\lambda_A), u_2,v_1 \in L^{\infty}(\lambda_B)$ and 
$v_2 \in L^{\infty}(\lambda_C)$. Consider $u=u_1 \otimes u_2 \in 
L^{\infty}(\lambda_A) \otimes L^{\infty}(\lambda_B)$
and $v=v_1 \otimes v_2 \in L^{\infty}(\lambda_B) \otimes L^{\infty}(\lambda_C)$.
Then we have $uv=u_1\otimes u_2v_1 \otimes v_2 \in L^{\infty}(\lambda_A) 
\otimes L^{\infty}(\lambda_B) \otimes L^{\infty}(\lambda_C)$. Therefore
\begin{align*}
\Gamma^{A,B,C}(uv)(X,Y) & = u_1(A)X(u_2v_1)(B)Yv_2(C)\\
& =u_1(A)Xu_2(B)v_1(B)Yv_2(C)\\ & =\Gamma^{A,B}(u)(X)\Gamma^{B,C}(v)(Y).
\end{align*}
Now, take $u\in L^{\infty}(\lambda_A\times \lambda_B)$ and $v\in L^{\infty}(\lambda_B\times \lambda_C)$.
Let $(u_i)_{i}$ and  $(v_j)_{j}$ be two nets in $L^{\infty}(\lambda_A) \otimes L^{\infty}(\lambda_B)$ 
and $L^{\infty}(\lambda_B) \otimes L^{\infty}(\lambda_C)$ respectively,
converging to $u$ and $v$ in the $w^*$-topology. By linearity, the previous
calculation implies that for all $i,j$,
$$
\Gamma^{A,B,C}(u_iv_j)(X,Y)=\Gamma^{A,B}(u_i)(X)\Gamma^{B,C}(v_j)(Y).
$$

Take $Z\in S^2(\mathcal{H})$ and fix $j$. Since $\Gamma^{B,C}(v_j)(Y)Z$
belongs to $S^2(\mathcal{H})$ we have
\begin{align*}
\underset{i}{\lim} \ \text{tr}(\Gamma^{A,B}(u_i)(X)\Gamma^{B,C}(v_j)(Y)Z)
& =\text{tr}(\Gamma^{A,B}(u)(X)\Gamma^{B,C}(v_j)(Y)Z) \\
& = \text{tr}(\Gamma^{B,C}(v_j)(Y)Z\Gamma^{A,B}(u)(X))
\end{align*}
by the $w^*$-continuity of $\Gamma^{A,B}$.
Similarly, since $Z\Gamma^{A,B}(u)(X) \in S^2(\mathcal{H})$, 
the $w^*$-continuity of $\Gamma^{B,C}$ implies that
\begin{align*}
\underset{j}{\lim} \ \text{tr}(\Gamma^{B,C}(v_j)(Y)Z\Gamma^{A,B}(u)(X))
& =\text{tr}(\Gamma^{B,C}(v)(Y)Z\Gamma^{A,B}(u)(X)) \\
& = \text{tr}(\Gamma^{A,B}(u)(X)\Gamma^{B,C}(v)(Y)Z).
\end{align*}
On the other hand, $(u_iv_j)_i$ $w^*$-converges to $uv_j$ for any fixed $j$ and 
$(uv_j)_j$ $w^*$-converges to $uv$ in $L^{\infty}(\lambda_A\times \lambda_B \times \lambda_C)$.  
Hence the $w^*$-continuity of $\Gamma^{A,B,C}$ implies that
\begin{align*}
\underset{j}{\lim} \, \underset{i}{\lim} \ \text{tr}(\Gamma^{A,B,C}(u_iv_j)(X,Y)Z)  
& = \underset{j}{\lim} \ \text{tr}(\Gamma^{A,B,C}(uv_j)(X,Y)Z) \\
& = \text{tr}(\Gamma^{A,B,C}(uv)(X,Y)Z).
\end{align*}
Thus, for all $Z\in S^2(\mathcal{H})$,
$$
\text{tr}(\Gamma^{A,B}(u)(X)\Gamma^{B,C}(v)(Y)Z) = \text{tr}(\Gamma^{A,B,C}(uv)(X,Y)Z),
$$
which implies that $\Gamma^{A,B,C}(uv)=\Gamma^{A,B}(u)(X)\Gamma^{B,C}(v)(Y).$
\end{proof}

The next theorem is our main result. 
It should be regarded as an extension of \cite[Corollary 8]{CLPST1} to the measurable 
setting. In the latter statement one considers a matrix $M=\{m_{ikj}\}_{i,k,j\geq 1}$ 
and it is implicitly shown that the bilinear Schur multiplier associated with $M$ maps
$S^2\times S^2$ into $S^1$ if and only if $M$ belongs to 
$\ell^\infty\bigl(\Gamma_2(\ell^1,\ell^\infty)\bigr)$.
In the current situation, matrices are replaced by functions. The scheme of 
proof of Theorem \ref{main} is similar to the one of \cite[Corollary 8]{CLPST1}
but requires various additional tools.

\begin{theorem}\label{main}
Let $\mathcal{H}$ be a separable Hilbert space, let $A,B$ and $C$ be normal operators on $\mathcal{H}$ 
and let $\phi\in L^{\infty}(\lambda_A \times 
\lambda_B \times \lambda_C)$. The following are equivalent :
\begin{enumerate}
\item[(i)] $\Gamma^{A,B,C}(\phi) \in B_2(S^2(\mathcal{H}) 
\times S^2(\mathcal{H}), S^1(\mathcal{H})).$
\item[(ii)] There exist a separable Hilbert space $H$ and two functions
$$
a\in L^{\infty}(\lambda_A \times \lambda_B ; H) \qquad
\text{and} \qquad
b\in L^{\infty}(\lambda_B\times \lambda_C ; H)
$$
such that 
$$
\phi(t_1,t_2,t_3)= \left\langle a(t_1,t_2),b(t_2,t_3) \right\rangle
$$
for a.e. $(t_1,t_2,t_3) \in \sigma(A) \times \sigma(B) \times \sigma(C).$
\end{enumerate}
In this case, 
\begin{equation}\label{AllEqual}
\bignorm{\Gamma^{A,B,C}(\phi)\colon S^2(\H)\times S^2(\H)
\longrightarrow S^1(\H)}=\inf\bigl\{\norm{a}_\infty\norm{b}_\infty\bigr\},
\end{equation}
where the infimum runs over all pairs $(a,b)$ satisfying (ii).
\end{theorem}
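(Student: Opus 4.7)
The plan is to establish the two implications separately. Direction (ii) $\Rightarrow$ (i) is the quicker one, obtained by approximation and the $\ast$-representation property of the double operator integral mappings. Direction (i) $\Rightarrow$ (ii) reduces, via Proposition \ref{Connection}, to a purely function-theoretic statement that matches Theorem \ref{factoL1}.

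For (ii) $\Rightarrow$ (i), by separability one may identify $H$ with $\ell^2$ and write $a=(a_n)_n$, $b=(b_n)_n$ with $a_n\in L^\infty(\lambda_A\times\lambda_B)$ and $b_n\in L^\infty(\lambda_B\times\lambda_C)$, so that $\phi=\sum_n a_n\overline{b_n}$ with $w^*$-convergence in $L^\infty(\lambda_A\times\lambda_B\times\lambda_C)$. Lemma \ref{decomp} and the $w^*$-continuity of $\Gamma^{A,B,C}$ then yield
\begin{equation*}
\Gamma^{A,B,C}(\phi)(X,Y) \,=\, \sum_n \Gamma^{A,B}(a_n)(X)\,\Gamma^{B,C}(\overline{b_n})(Y),
\end{equation*}
with each summand in $S^1(\H)$ as a product of two Hilbert-Schmidt operators. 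Cauchy-Schwarz in $S^1$, combined with the $\ast$-representation property of $\Gamma^{A,B}$ and $\Gamma^{B,C}$ recalled in Remark \ref{n=2}, gives
\begin{equation*}
\sum_n \|\Gamma^{A,B}(a_n)(X)\|_2^2 \,=\, \Bigl\langle \Gamma^{A,B}\Bigl(\sum_n |a_n|^2\Bigr)(X),X\Bigr\rangle_{S^2} \,\leq\, \|a\|_\infty^2\,\|X\|_2^2,
\end{equation*}
and the symmetric estimate for $b$, leading to a uniform $S^1$-bound on partial sums that transfers to the limit by weak-$*$ lower semicontinuity of the trace norm on $S^1(\H)=\mathcal{K}(\H)^*$. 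This yields (i) with $\|\Gamma^{A,B,C}(\phi):S^2\times S^2\to S^1\|\leq\|a\|_\infty\|b\|_\infty$.

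For (i) $\Rightarrow$ (ii), set $K=\|\Gamma^{A,B,C}(\phi):S^2(\H)\times S^2(\H)\to S^1(\H)\|$. Proposition \ref{Connection}, together with the isometric identification $S^1(\H)\cap S^2(\H_C,\H_A)=S^1(\H_C,\H_A)$, reduces (i) to the boundedness of
\begin{equation*}
\Lambda(\phi)\colon S^2(L^2(\lambda_B),L^2(\lambda_A))\times S^2(L^2(\lambda_C),L^2(\lambda_B))\longrightarrow S^1(L^2(\lambda_C),L^2(\lambda_A))
\end{equation*}
by $K$. The plan is then to prove the key claim that this $S^1$-boundedness forces $\phi\in L^\infty_\sigma(\sigma(B);\Gamma_2(L^1(\lambda_A),L^\infty(\lambda_C)))$ with $\|\phi\|_{\infty,\Gamma_2}\leq K$; Theorem \ref{factoL1} then directly produces the factorization (ii) with $\|a\|_\infty\|b\|_\infty\leq K$. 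The norm equality $(\ref{AllEqual})$ follows from combining the two directions with the infimum in Theorem \ref{factoL1}.

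The main obstacle is the key claim above. My approach is to test $\Lambda(\phi)$ against rank-one Hilbert-Schmidt operators: taking $X$ with kernel $\xi(t_1)\overline{\eta(t_2)}$ and $Y$ with kernel $\nu(t_2)\overline{\mu(t_3)}$, a direct kernel computation identifies $\Lambda(\phi)(X,Y)$ with the Hilbert-Schmidt operator of kernel $\xi(t_1)\overline{\mu(t_3)}\,\psi_r(t_1,t_3)$, where $r=\overline{\eta}\nu\in L^1(\lambda_B)$ and $\psi_r(t_1,t_3)=\int\phi(t_1,t_2,t_3)r(t_2)\,d\lambda_B(t_2)$. Pairing against a further rank-one compact operator $W\in\mathcal{K}(L^2(\lambda_A),L^2(\lambda_C))$ and applying the $S^1$-bound converts the hypothesis into Schur-multiplier-type estimates on the integrated kernels $\psi_r$ controlled by $\|r\|_1$. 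Iterating with $X,Y$ of higher rank built from orthonormal families and invoking the Grothendieck-Pisier factorization theorem for measurable Schur multipliers (Theorem \ref{Pellerthm}(1)) upgrades these estimates into a uniform $\Gamma_2$-bound for the slice multipliers $t_2\mapsto u_{\phi(\cdot,t_2,\cdot)}$, while the $w^*$-continuity provided by Theorem \ref{Pellerthm}(2) ensures that this slice map is $w^*$-measurable, hence belongs to $L^\infty_\sigma(\sigma(B);\Gamma_2(L^1(\lambda_A),L^\infty(\lambda_C)))$ with the correct norm. The delicate passage from ``averaged'' to ``pointwise'' slices of $\phi$, for which an exhausting filtration of $\sigma(B)$ with a martingale-type approximation argument is natural, is where the proof requires the most care.
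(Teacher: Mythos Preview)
Your (ii) $\Rightarrow$ (i) is essentially the paper's argument: expand in a Hilbert basis, apply Lemma \ref{decomp}, use the $*$-representation property of $\Gamma^{A,B}$ and $\Gamma^{B,C}$ from Remark \ref{n=2}, and Cauchy--Schwarz. The paper actually shows the partial sums converge in $S^1$ (not merely invoking lower semicontinuity), which gives the identity $\Gamma^{A,B,C}(\phi)=\Theta$ directly, but your variant is fine.

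For (i) $\Rightarrow$ (ii), your overall architecture matches the paper's: reduce to $\Lambda(\phi)$ via Proposition \ref{Connection}, prove the key claim that $\phi\in L^\infty_\sigma\bigl(\lambda_B;\Gamma_2(L^1(\lambda_A),L^\infty(\lambda_C))\bigr)$ with norm $\leq K$, then invoke Theorem \ref{factoL1}. However, your route to the key claim is different and, as written, has a genuine gap. Testing with rank-one $X,Y$ only gives information about the \emph{integrated} kernels $\psi_r=\int\phi(\cdot,t_2,\cdot)r(t_2)\,d\lambda_B(t_2)$; the passage ``from averaged to pointwise slices'' that you flag as delicate is in fact the entire content of the step, and you do not carry it out. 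In particular you do not explain how a martingale or differentiation argument yields a uniform a.e.\ bound on $\gamma_2\bigl(u_{\phi(\cdot,t_2,\cdot)}\bigr)$, nor how it gives $w^*$-measurability of $t_2\mapsto u_{\phi(\cdot,t_2,\cdot)}$ as a $\Gamma_2$-valued function (measurability into $B(L^1,L^\infty)$ is automatic but does not suffice, since $\Gamma_2$ has a strictly larger predual).

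The paper bypasses all of this with a duality argument. Writing $H_1=L^2(\lambda_A)$, $H_3=L^2(\lambda_C)$ and viewing $\Lambda(\phi)$ as a bounded map $L^2(\lambda_B;H_1)\overset{\wedge}{\otimes}L^2(\lambda_B;H_3)\to S^1(H_3,H_1)$, one takes its adjoint $v\colon B(H_1,H_3)\to B\bigl(L^2(\lambda_B;H_1),L^2(\lambda_B;H_3)\bigr)$. A direct kernel computation on rank-one operators shows that each $v(\xi\otimes\eta)$ is an $L^\infty(\lambda_B)$-module map, so Lemma \ref{bimod} forces $v(\xi\otimes\eta)\in L^\infty_\sigma\bigl(\lambda_B;B(H_1,H_3)\bigr)$; $w^*$-continuity of $v$ then gives $v\bigl(B(H_1,H_3)\bigr)\subset L^\infty_\sigma\bigl(\lambda_B;B(H_1,H_3)\bigr)$. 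Restricting $v$ to $\mathcal{K}(H_1,H_3)$ and applying Corollary \ref{CoroDuality} identifies it with an element $\widetilde{\phi}\in L^\infty_\sigma\bigl(\lambda_B;B(\mathcal{K}(H_1,H_3),B(H_1,H_3))\bigr)$, and one checks that $\widetilde{\phi}(t_2)=R_{\phi(\cdot,t_2,\cdot)}$ for a.e.\ $t_2$. The inclusion (\ref{L-infty-inclusion}) together with Theorem \ref{Pellerthm} then gives $\phi\in L^\infty_\sigma\bigl(\lambda_B;\Gamma_2\bigr)$ with norm $\leq K$, with no limiting or approximation argument needed. This is the step you should replace.
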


\begin{proof}

\smallskip\noindent
\underline{(ii) $\Rightarrow$ (i)}: 
Assume (ii) and let 
$(\epsilon_k)_{k\in \mathbb{N}}$ be a Hilbertian basis of $H$. 
For any $k\in\Ndb$, define 
$$
a_k=\left\langle a,\epsilon_k \right\rangle \in L^{\infty}(\lambda_A \times \lambda_B) 
\qquad \text{and} \qquad b_k=\left\langle b,\epsilon_k 
\right\rangle \in L^{\infty}(\lambda_B\times \lambda_C).
$$
We set 
$$
\vert a \vert=\Bigl(\sum_n |a_k|^2\Bigr)^{\frac12};
$$
this function belongs to  $L^{\infty}(\lambda_A \times \lambda_B) $ and we have
$\norm{a}_\infty = \norm{\vert a \vert}_\infty$.

Let $X\in S^2(\mathcal{H})$. Since $\Gamma^{A,B}$ is a $w^*$-continuous $*$-representation, we have
\begin{align*}
\sum_k \| \Gamma^{A,B}(a_k)(X)\|_2^2
& = \sum_k \left\langle \Gamma^{A,B}(a_k)(X), \Gamma^{A,B}(a_k)(X) \right\rangle \\
& = \sum_n \left\langle \Gamma^{A,B}(\overline{a_k})\Gamma^{A,B}(a_k)(X), X \right\rangle \\
& = \left\langle \Gamma^{A,B}(\vert a \vert^2)(X), X \right\rangle \\
& \leq \|\vert a \vert^2\|_\infty \|X\|_2^2 = \|a\|_\infty^2 \|X\|_2^2.
\end{align*}
We prove similarly that if $Y\in S^2(\mathcal{H})$, then
$$
\sum_n \| \Gamma^{B,C}(\overline{b_k})(Y)\|_2^2 \leq \|b\|_\infty^2 \|Y\|_2^2.
$$
Consequently, for all $X,Y \in S^2(\mathcal{H})$, we have the inequalities
\begin{align*}
\sum_k \| \Gamma^{A,B}(a_k)(X)\Gamma^{B,C}(\overline{b_k})(Y) \|_1
& \leq \sum_k \| \Gamma^{A,B}(a_k)(X) \|_2 \| \Gamma^{B,C}(\overline{b_k})(Y) \|_2 \\
& \leq \Bigl( \sum_k \| \Gamma^{A,B}(a_k)(X) \|_2^2\Bigr)^{1/2}
\Bigl( \sum_k \| \Gamma^{B,C}(\overline{b_k})(Y) \|_2^2\Bigr)^{1/2} \\
& \leq \|a\|_\infty\|b\|_\infty\|X\|_2 \|Y\|_2.
\end{align*}
Therefore, we can define a bounded bilinear map 
$$
\Theta\colon S^2(\mathcal{H}) \times S^2(\mathcal{H}) \longrightarrow  S^1(\mathcal{H})
$$
by 
$$
\Theta(X,Y) = \sum_{k=1}^{\infty} \Gamma^{A,B}(a_k)(X)\Gamma^{B,C}(\overline{b_k})(Y),
\qquad X,Y\in S^2(\mathcal{H}),
$$
and we have 
\begin{equation}\label{Tab}
\norm{\Theta}\leq \norm{a}_\infty\norm{b}_\infty.
\end{equation}

We claim that 
$$
\Gamma^{A,B,C}(\phi) = \Theta.
$$
To check this, consider 
$$
\widetilde{a_n} = \sum_{k=0}^n a_k\otimes 
\epsilon_k \qquad\text{and} \qquad\widetilde{b_n} = \sum_{k=0}^n b_k\otimes 
\epsilon_k
$$
for any $n\in \mathbb{N}$.
Then we set
$$
\phi_n(t_1,t_2,t_3) = \bigl\langle \widetilde{a_n} (t_1,t_2), \widetilde{b_n}(t_2,t_3)\bigr\rangle
=\sum_{k=0}^n a_k(t_1,t_2)\overline{b_k(t_2,t_3)}.
$$
Fix $X,Y \in S^2(\mathcal{H})$. 
We have $\Gamma^{A,B,C}(\phi_n) = \sum_{k=0}^n \Gamma^{A,B,C} (a_k\overline{b_k})\,$ hence by Lemma 
\ref{decomp}, 
$$
\Gamma^{A,B,C}(\phi_n)(X,Y) = \sum_{k=0}^n \Gamma^{A,B}(a_k)(X)\Gamma^{B,C}(\overline{b_k})(Y).
$$
Consequently,
$$
\Gamma^{A,B,C}(\phi_n)(X,Y) 
\underset{n\rightarrow +\infty}{\longrightarrow} \Theta(X,Y) \ \ 
\text{in} \ S^1(\mathcal{H}).
$$
Moreover $\phi_n\to\phi$ a.e. and 
$(\phi_n)_n$ is bounded in $L^{\infty}(\lambda_A \times \lambda_B \times \lambda_C)$.
Indeed,
$$
\bigl\vert 
\phi_n(t_1,t_2,t_3) \bigr\vert  \leq \Bigl(\sum_{k=0}^n \vert a_k(t_1,t_2)\vert^2\Bigr)^{\frac12}
\Bigl(\sum_{k=0}^n \vert 
b_k(t_2,t_3)\vert^2\Bigr)^{\frac12}
\leq \norm{a}_\infty\norm{b}_\infty.
$$
Hence by Lebesgue's dominated convergence theorem, $w^*$-$\underset{n\rightarrow +\infty}{\lim}\phi_n = \phi$. 
The $w^*$-continuity of $\Gamma^{A,B,C}$ implies that
$$
\Gamma^{A,B,C}(\phi_n)(X,Y) \underset{n\rightarrow +\infty}{\longrightarrow} 
\Gamma^{A,B,C}(\phi)(X,Y)
$$
weakly in $S^2(\mathcal{H})$.
We conclude that $\Gamma^{A,B,C}(\phi)(X,Y) = \Theta(X,Y)$.

This shows (i). Furthermore  (\ref{Tab})  yields
\begin{equation}\label{Ineq1}
\bignorm{\Gamma^{A,B,C}(\phi)\colon S^2(\H)\times S^2(\H)\longrightarrow S^1(\H)} \leq \norm{a}_\infty\norm{b}_\infty.
\end{equation}

\smallskip\noindent
\underline{(i) $\Rightarrow$ (ii)}: 
As in Subsection 
\ref{Op-to-Fu}, we consider the triple integral mappings $\Lambda(\phi)$ 
in the case when
$(\Omega_1,\mu_1)=(\sigma(C),\lambda_C)$,
$(\Omega_2,\mu_2)=(\sigma(B),\lambda_B)$
and
$(\Omega_3,\mu_3)=(\sigma(A),\lambda_A)$. Note that these measurable spaces
are separable.

Assume (i) and apply Proposition \ref{Connection}, which connects
$\Gamma^{A,B,C}(\phi)$ to $\Lambda(\phi)$. Let 
$$
X\in S^2(L^2(\lambda_B), L^2(\lambda_A))
\qquad\hbox{and}\qquad 
Y \in S^2(L^2(\lambda_C), L^2(\lambda_B)).
$$
By (\ref{subspace}), we have
\begin{align*}
\|\Lambda(\phi)(X,Y)\|_1 
& = \|\rho_A^{-1} \circ \Gamma^{A,B,C}(\phi)(\widetilde{X},\widetilde{Y}) \circ \rho_C\|_1 \\
& \leq  \|\Gamma^{A,B,C}(\phi)(\widetilde{X},\widetilde{Y})\|_1 \\
& \leq \bignorm{\Gamma^{A,B,C}(\phi)\colon S^2\times S^2\to S^1}\| X \|_2 \| Y \|_2,
\end{align*}
since $\| \widetilde{X} \|_2=\norm{X}_2$ and $\| \widetilde{Y} \|_2=\norm{Y}_2$.
This shows that $\Lambda(\phi)$ maps $S^2\times S^2$ into $S^1$, with
\begin{equation}\label{comparison2}
\bignorm{\Lambda(\phi)\colon S^2\times S^2\to S^1}\leq 
\bignorm{\Gamma^{A,B,C}(\phi)\colon S^2\times S^2\to S^1}.
\end{equation}

We now extend the proof of \cite[Corollary 8]{CLPST1} to get a Hilbert space factorization.
For convenience we write $H_1 = L^2(\lambda_A), H_2 = L^2(\lambda_B)$ and $H_3 = L^2(\lambda_C)$. 
Each of these spaces naturally identifies with its conjugate space, hence we will
not use conjugation bars as we had to do in Subsection \ref{Hilbert}.

We have just proved above that $\Lambda(\phi)$ extends to a bounded bilinear map 
$S^2(H_2, H_1)\times S^2(H_3, H_2)$ into $S^1(H_3, H_1)$. According to the identification
$$
B_2(S^2 \times S^2,S^2)=B(S^2 \overset{\wedge}{\otimes} S^2,S^2),
$$
provided by (\ref{biliproj}), it can be also regarded as a bounded 
linear operator from the projective tensor product $S^2(H_2,H_1) \overset{\wedge}{\otimes} S^2(H_3,H_2)$
into $S^1(H_3,H_1)$. 
By
(\ref{S2=L2}), we may naturally identify $S^2(H_3, H_2)$ and  $S^2(H_2, H_1)$ with
the Bochner spaces $L^2(\lambda_B;H_3)$ and $L^2(\lambda_B;H_1)$, respectively. We
may therefore regard $\Lambda(\phi)$
as a bounded linear operator 
$$
\Lambda(\phi)\colon L^2(\lambda_B;H_1)\overset{\wedge}{\otimes} L^2(\lambda_B;H_3) 
\longrightarrow S^1(H_3,H_1).
$$
Property (\ref{dualproj}) and Hilbert space self-duality provide a natural
isometric identification
$$
\bigl(L^2(\lambda_B;H_1)\overset{\wedge}{\otimes} L^2(\lambda_B;H_3) \bigr)^*\,
=\, B\bigl(L^2(\lambda_B;H_1), L^2(\lambda_B;H_3)\bigr).
$$
We further have $S^1(H_3,H_1)^*=B(H_1,H_3)$, by (\ref{Dual-S1}). We now let
$$
v\colon B(H_1,H_3)\longrightarrow B\bigl(L^2(\lambda_B;H_1), L^2(\lambda_B;H_3)\bigr)
$$
be the adjoint of $\Lambda(\phi)$ through these identifications.

According to (\ref{Multiplication}) and
(\ref{Multiplication2}), we have an isometric embedding
$$
L^\infty_\sigma\bigl(\lambda_B;B(H_1,H_3)\bigr)\subset B\bigl(L^2(\lambda_B;H_1), L^2(\lambda_B;H_3)\bigr)
$$
obtained by identifying any $\Delta\in L^\infty_\sigma\bigl(\lambda_B;B(H_1,H_3)\bigr)$ with 
the multiplication operator $M_\Delta$. It is easy to check (left to the reader)
that this embedding is $w^*$-continuous. Hence we may regard the dual space  
$L^\infty_\sigma\bigl(\lambda_B;B(H_1,H_3)\bigr)$ as a $w^*$-closed subspace of 
$B\bigl(L^2(\lambda_B;H_1), L^2(\lambda_B;H_3)\bigr)$.
We aim at showing (\ref{v}) below.

Let $\xi\in H_1$ and $\eta\in H_3$, and consider 
$\xi \otimes \eta$ as an element of $B(H_1,H_3)$.
Take any 
$c \in H_1$, $c',d' \in L^2(\lambda_B)$ and $d\in H_3$, then
regard $c'\otimes c$ as an element of $L^2(\lambda_B;H_1)$
and $d'\otimes d$ as an element of $L^2(\lambda_B;H_3)$.
We have
\begin{align*}
\bigl\langle \bigl[v(\xi & \otimes \eta)\bigr](c'\otimes c), d' \otimes d\bigr\rangle_{L^2(\lambda_B;H_3),
L^2(\lambda_B;H_3)} \\
& = \bigl\langle \xi \otimes \eta,\Lambda(\phi)\bigl[(c' \otimes c) \otimes (d'\otimes d)\bigr] 
\bigr\rangle_{B(H_1,H_3),S^1(H_3,H_1)}\\
& = \int_{\sigma(A) \times \sigma(B) \times \sigma(C)}
\phi(t_1,t_2,t_3) \xi(t_1) \eta(t_3)
c'(t_2) d'(t_2) c(t_1)  d(t_3) \,
\text{d}\lambda_A(t_1)\text{d}\lambda_B(t_2)\text{d}\lambda_C(t_3)\,.
\end{align*}
It readily follows from this formula that for any $\theta\in L^\infty(\lambda_B)$,
$$
\bigl\langle \bigl[v(\xi \otimes \eta)\bigr](\theta c'\otimes c), d' \otimes d\bigr\rangle\,
=\, \bigl\langle \bigl[v(\xi \otimes \eta)\bigr](c'\otimes c), \theta d' \otimes d\bigr\rangle.
$$
Since $L^2(\lambda_B)\otimes H_1$ and $L^2(\lambda_B)\otimes H_3$
are dense in $L^2(\lambda_B;H_1)$ and $L^2(\lambda_B;H_3)$, respectively, this implies that 
$[v(\xi \otimes \eta)] (\theta \varphi) =\theta [v(\xi  \otimes \eta)] (\varphi)$ for any 
$\varphi\in L^2(\lambda_B;H_1)$ and any $\theta\in L^\infty(\lambda_B)$. By Lemma \ref{bimod}, this implies that
$v(\xi  \otimes \eta)$ belongs to $L^{\infty}_{\sigma}\bigl(\lambda_B, B(H_1,H_3)\bigr)$.

Since $v$ is $w^*$-continuous and $H_1\otimes H_3$ is 
$w^*$-dense in $B(H_1,H_3)$, we deduce that
\begin{equation}\label{v}
v\bigl(B(H_1,H_3)\bigr) 
\subset L^{\infty}_{\sigma}(\lambda_B; B(H_1,H_3)).
\end{equation}

Consider now the restriction $v_0=v_{|\mathcal{K}(H_1,H_3)}$ of $v$ 
to the subspace $\mathcal{K}(H_1,H_3)$ of compact operators 
from $H_1$ into $H_3$. By 
(\ref{v}), we may write
$$
v_0\colon \mathcal{K}(H_1,H_3)\longrightarrow L^\infty_\sigma \bigl(\lambda_B;
B(H_1,H_3)\bigr).
$$
Corollary \ref{CoroDuality} provides an identification
$$
B\bigl(\mathcal{K}(H_1,H_3),L^\infty_\sigma(\lambda_B;
B(H_1,H_3))\bigr) = 
L^\infty_\sigma\bigl(\lambda_B;B(\mathcal{K}(H_1,H_3),
B(H_1,H_3))\bigr).
$$
Let $\widetilde{\phi}\in L^\infty_\sigma\bigl(\lambda_B;B(\mathcal{K}(H_1,H_3),
B(H_1,H_3))\bigr)$ be corresponding to
$v_0$ in this identification. Then by the preceding computation
we have that for any $c, \xi \in H_1$ and $d,\eta\in H_3$,
$$
\bigl\langle \bigl[\widetilde{\phi}(t_2)\bigr](\xi\otimes\eta), d\otimes c\bigr\rangle
= \int_{\sigma(A)\times\sigma(C)}
\phi(t_1,t_2,t_3)\xi(t_1)\eta(t_3)c(t_1)d(t_3)\,
\text{d}\lambda_A(t_1)\text{d}\lambda_C(t_3)
$$
for a.e. $t_2$ in $\sigma(B)$.

Following Subsection \ref{Schurmulti}, for any $J\in L^2(\lambda_A\times\lambda_C)$,
we let $X_J\in S^2(H_1,H_3)$ be the Hilbert-Schmidt
operator with kernel $J$. Then the above formula shows that 
for $J=\xi\otimes\eta$, we have
\begin{equation}\label{Connexion}
\bigl[\widetilde{\phi}(t_2)\bigr](X_J) =
 X_{\phi(\cdotp,t_2,\cdotp)J}\qquad \hbox{for a.e.}\, t_2.
\end{equation}
By density of $H_1\otimes H_3$ in $L^2(\lambda_A\times\lambda_C)$, we deduce that
(\ref{Connexion}) holds true for any $J\in L^2(\lambda_A\times\lambda_C)$.
This means that for a.e. $t_2$, $\phi(\cdotp,t_2,\cdotp)$, regarded as an element 
of $L^\infty(\lambda_A\times\lambda_C)$, is a measurable Schur
multiplier, whose corresponding operator is 
$$
\widetilde{\phi}(t_2) = R_{\phi(\cdotp,t_2,\cdotp)} \colon 
\mathcal{K}(L^2(\lambda_A),L^2(\lambda_C)) \longrightarrow
B(L^2(\lambda_A),L^2(\lambda_C)).
$$
This shows two things. First, $\widetilde{\phi}$ belongs to
$L^\infty_\sigma\bigr(\lambda_B; \Gamma_2(L^1(\lambda_A), L^\infty(\lambda_C))\bigr)$
regarded as a subspace of 
$L^\infty_\sigma\bigl(\lambda_B;B(\mathcal{K}(H_1,H_3),
B(H_1,H_3))\bigr)$, by (\ref{L-infty-inclusion}). Second,
the element of $L^\infty(\lambda_A\times\lambda_B\times\lambda_C)$ 
corresponding to $\widetilde{\phi}$ through the inclusion
(\ref{Meaning}) is the function $\phi$ itself. Thus we have proved that
$\phi\in L^\infty_\sigma\bigr(\lambda_B; \Gamma_2(L^1(\lambda_A), 
L^\infty(\lambda_C))\bigr)$. Further the above reasoning shows (using the notation
$\norm{\,\cdotp\,}_{\infty,\Gamma_2}$ introduced after (\ref{Meaning})) that
$$
\norm{\phi}_{\infty,\Gamma_2}\leq
\bignorm{\Lambda(\phi)\colon S^2 \times S^2 \to S^1}.
$$
According to (\ref{comparison2}), this implies that
$$
\norm{\phi}_{\infty,\Gamma_2}\leq
\bignorm{\Gamma^{A,B,C}(\phi)\colon S^2 \times S^2 \to S^1}.
$$
Now applying Theorem \ref{factoL1} yields (ii), with 
$\norm{a}_\infty\norm{b}_\infty\leq
\bignorm{\Gamma^{A,B,C}(\phi)\colon S^2 \times S^2 \to S^1}$.
\end{proof}

Theorem \ref{main} extends to the framework of triple operator
integrals associated with functions as defined in Subsection \ref{Functions}. 
With similar proofs as above, we obtain the following.

\begin{theorem}
Let $(\Omega_1, \mu_1), (\Omega_2, \mu_2)$ and $(\Omega_3, \mu_3)$ be 
three separable measure spaces, and let $\phi \in 
L^{\infty}(\Omega_1 \times \Omega_2 \times \Omega_3)$. Then
$\Lambda(\phi)$ extends to a bounded bilinear map
$$
\Lambda(\phi) : S^2(L^2(\Omega_2), L^2(\Omega_3)) \times 
S^2(L^2(\Omega_1), L^2(\Omega_2)) \rightarrow S^1(L^2(\Omega_1), L^2(\Omega_3))$$
if and only if there exist a separable Hilbert space $H$ and two functions
$$
a\in L^{\infty}(\Omega_1 \times \Omega_2 ; H) \qquad
\text{and} \qquad
b\in L^{\infty}(\Omega_2\times \Omega_3 ; H)
$$
such that 
$$
\phi(t_1,t_2,t_3)= \left\langle a(t_1,t_2), b(t_2,t_3) \right\rangle
$$
for a.e. $(t_1,t_2,t_3) \in \Omega_1 \times \Omega_2 \times \Omega_3.$

In this case, 
\begin{equation}
\bignorm{\Lambda(\phi) \colon  S^2 \times S^2 \rightarrow S^1 }=\inf\bigl\{\norm{a}_\infty\norm{b}_\infty\bigr\},
\end{equation}
where the infimum runs over all pairs $(a,b)$ verifying the above factorization property.
\end{theorem}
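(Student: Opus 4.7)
The plan is to mirror the proof of Theorem \ref{main} in the function setting: since the bridge Proposition \ref{Connection} is no longer needed (we work directly with $\Lambda$), the argument actually simplifies, and the role of the $*$-representations $\Gamma^{A,B}$, $\Gamma^{B,C}$ is played throughout by the measurable Schur multipliers $R_u$, $R_v$ from Subsection \ref{Schurmulti}. The $w^*$-continuity of $\Lambda$ provided by Proposition \ref{Function-case}, Peller's theorem (Theorem \ref{Pellerthm}), and Theorem \ref{factoL1} remain the three workhorses.

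For the implication $(ii)\Rightarrow(i)$ I would first establish the function-theoretic analog of Lemma \ref{decomp}: for any $u\in L^\infty(\Omega_1\times\Omega_2)$ and $v\in L^\infty(\Omega_2\times\Omega_3)$ and $X\in S^2(L^2(\Omega_1),L^2(\Omega_2))$, $Y\in S^2(L^2(\Omega_2),L^2(\Omega_3))$, one has $\Lambda(uv)(Y,X)=R_v(Y)\circ R_u(X)$. This is immediate from the kernel formula when $u=u_1\otimes u_2$, $v=v_1\otimes v_2$ are elementary tensors, and extends by $w^*$-density and the $w^*$-continuity of $\Lambda$ just as in Lemma \ref{decomp}. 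Then, given the factorization $\phi(t_1,t_2,t_3)=\langle a(t_1,t_2),b(t_2,t_3)\rangle$, fix a Hilbert basis $(\epsilon_k)$ of $H$ and set $a_k=\langle a,\epsilon_k\rangle\in L^\infty(\Omega_1\times\Omega_2)$, $b_k=\langle b,\epsilon_k\rangle\in L^\infty(\Omega_2\times\Omega_3)$, so that $\phi=\sum_k a_k\overline{b_k}$ with $\sum_k|a_k|^2\leq\norm{a}_\infty^2$ and $\sum_k|b_k|^2\leq\norm{b}_\infty^2$ a.e. A direct kernel estimate yields $\sum_k\norm{R_{a_k}(X)}_2^2\leq\norm{a}_\infty^2\norm{X}_2^2$ and similarly for $Y$, so Cauchy--Schwarz forces the sum $\Theta(Y,X):=\sum_k R_{\overline{b_k}}(Y)\circ R_{a_k}(X)$ to converge in $S^1(L^2(\Omega_1),L^2(\Omega_3))$ with $\norm{\Theta}\leq\norm{a}_\infty\norm{b}_\infty$. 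Dominated convergence applied to the partial sums of $\phi$, together with the $w^*$-continuity of $\Lambda$, identifies $\Lambda(\phi)(Y,X)=\Theta(Y,X)$, proving (i) and the inequality $\bignorm{\Lambda(\phi)\colon S^2\times S^2\to S^1}\leq\norm{a}_\infty\norm{b}_\infty$.

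For the implication $(i)\Rightarrow(ii)$ I follow the second half of the proof of Theorem \ref{main} essentially verbatim, now starting directly from $\Lambda(\phi)$. Writing $H_1=L^2(\Omega_1)$, $H_3=L^2(\Omega_3)$ and applying $(\ref{biliproj})$, $(\ref{S2=L2})$ together with the Fubini identification $L^2(\Omega_i\times\Omega_2)=L^2(\Omega_2;H_i)$, I reinterpret $\Lambda(\phi)$ as an operator $L^2(\Omega_2;H_3)\overset{\wedge}{\otimes}L^2(\Omega_2;H_1)\to S^1(H_1,H_3)$ and denote by $v\colon B(H_3,H_1)\to B(L^2(\Omega_2;H_3),L^2(\Omega_2;H_1))$ its adjoint. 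Checking on rank-one operators $\overline{\xi}\otimes\eta$ that $v(\overline{\xi}\otimes\eta)$ is an $L^\infty(\Omega_2)$-module map, then invoking Lemma \ref{bimod} together with the $w^*$-continuity of $v$ and $w^*$-density of finite-rank operators, shows that $v$ maps $\mathcal{K}(H_3,H_1)$ into $L^\infty_\sigma(\Omega_2;B(H_3,H_1))$. Corollary \ref{CoroDuality} associates to this restriction a $\widetilde{\phi}\in L^\infty_\sigma(\Omega_2;B(\mathcal{K}(H_3,H_1),B(H_3,H_1)))$; an elementary-tensor computation then identifies $\widetilde{\phi}(t_2)=R_{\phi(\,\cdot\,,t_2,\,\cdot\,)}$ for a.e.\ $t_2$. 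Peller's theorem combined with the inclusion $(\ref{L-infty-inclusion})$ places $\phi$ in $L^\infty_\sigma(\Omega_2;\Gamma_2(L^1(\Omega_1),L^\infty(\Omega_3)))$ with $\norm{\phi}_{\infty,\Gamma_2}\leq\bignorm{\Lambda(\phi)\colon S^2\times S^2\to S^1}$, and Theorem \ref{factoL1} finally produces the desired Hilbert space factorization with $\norm{a}_\infty\norm{b}_\infty\leq\bignorm{\Lambda(\phi)\colon S^2\times S^2\to S^1}$, closing the two-sided norm identity. The main obstacle I expect is the module-map verification and the pointwise identification $\widetilde{\phi}(t_2)=R_{\phi(\,\cdot\,,t_2,\,\cdot\,)}$, which requires carefully chaining the duality identifications $(\ref{biliproj})$, $(\ref{dualproj})$, $(\ref{L1tensor})$, $(\ref{dualS2})$, $(\ref{S2=L2})$; once this is done, Peller's theorem and Theorem \ref{factoL1} do the remaining work.
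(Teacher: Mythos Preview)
Your proposal is correct and is precisely the approach the paper intends: the paper simply states that this theorem follows ``with similar proofs as above,'' meaning by adapting the proof of Theorem \ref{main}, and your outline does exactly that, replacing the $*$-representations $\Gamma^{A,B}$, $\Gamma^{B,C}$ by the Schur multipliers $R_u$, $R_v$ and dropping the bridge Proposition \ref{Connection}. One minor remark: the analogue of Lemma \ref{decomp}, namely $\Lambda(uv)(Y,X)=R_v(Y)\circ R_u(X)$, follows directly from the integral kernel formula for all $u,v$ (no $w^*$-density argument is needed here, since $\Lambda(uv)$ already acts pointwise by the explicit formula in Subsection \ref{Functions}).
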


\vskip 1cm

\section{Additional comments}\label{add}

In this last section, we explain connections between our theorems and previous 
results in this area. We first show that Peller's Theorem from \cite{Peller1985}
(mentioned in the Introduction) is a direct consequence of Theorem \ref{main}. 
With the terminology of the present paper, Peller's Theorem can be stated as follows.

\begin{theorem}\label{DOI} (Peller \cite{Peller1985})
Let $A,B$ be normal operators on a separable Hilbert space
$\H$ and let $\lambda_A$ and $\lambda_B$ be scalar-valued 
spectral measures for $A$ and $B$. For any $\psi\in L^\infty(\lambda_A\times
\lambda_B)$, the following are equivalent. 

\begin{itemize}
\item [(i)] The double operator integral mapping 
$\Gamma^{A,B}(\psi)$ extends to a bounded map from $S^1(\H)$
into itself.
\item [(ii)] There exist a separable Hilbert space $H$ and two functions
$a\in L^\infty(\lambda_A;H)$ and $b\in L^\infty(\lambda_B;H)$ such that
\begin{equation}\label{FactorPeller}
\psi(s,t) = \langle a(s), b(t)\rangle 
\end{equation}
for a.e. $(s,t)\in \sigma(A)\times\sigma(B)$.
\end{itemize}
In this case, 
$$
\bignorm{\Gamma^{A,B}(\psi)\colon S^1(\H)\longrightarrow S^1(\H)}
=\inf\bigl\{\norm{a}_\infty\norm{b}_\infty\bigr\},
$$ 
where the infimum  runs over all pairs $(a,b)$ of functions such that 
(\ref{FactorPeller}) holds true.
\end{theorem}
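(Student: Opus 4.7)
My plan is to deduce Peller's theorem (Theorem \ref{DOI}) from Theorem \ref{main} via the triple $(A,B,B)$. Given $\psi\in L^\infty(\lambda_A\times\lambda_B)$, I lift it to $\phi\in L^\infty(\lambda_A\times\lambda_B\times\lambda_B)$ by setting $\phi(t_1,t_2,t_3):=\psi(t_1,t_3)$. The key identity is
\begin{equation*}
\Gamma^{A,B}(\psi)(XY)=\Gamma^{A,B,B}(\phi)(X,Y),\qquad X,Y\in S^2(\H).
\end{equation*}
On elementary tensors $\psi=f\otimes g$ one has $\phi=f\otimes 1\otimes g$ and both sides equal $f(A)XYg(B)$ by (\ref{Def-G2}). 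This extends to all $\psi$ by $w^*$-to-weak continuity, using the $w^*$-continuity of $\Gamma^{A,B}$ and $\Gamma^{A,B,B}$ (Proposition \ref{nintegrals}) together with the $w^*$-continuity of the embedding $\psi\mapsto\phi$, verified via Fubini by integrating the middle variable out of any test function in $L^1$. Combined with the standard factorization $S^1=S^2\cdot S^2$ --- every $T\in S^1(\H)$ admits a decomposition $T=XY$ with $X,Y\in S^2(\H)$ and $\norm{X}_2\norm{Y}_2=\norm{T}_1$ via polar decomposition, while $\norm{XY}_1\leq\norm{X}_2\norm{Y}_2$ by H\"older --- the identity yields
\begin{equation*}
\bignorm{\Gamma^{A,B}(\psi)\colon S^1(\H)\to S^1(\H)}=\bignorm{\Gamma^{A,B,B}(\phi)\colon S^2(\H)\times S^2(\H)\to S^1(\H)},
\end{equation*}
reducing the problem to an application of Theorem \ref{main} to $\phi$.

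For the direction $(ii)\Rightarrow(i)$: given $\psi(s,t)=\langle a(s),b(t)\rangle$ with $a\in L^\infty(\lambda_A;H)$ and $b\in L^\infty(\lambda_B;H)$, define $\tilde a(t_1,t_2):=a(t_1)$ and $\tilde b(t_2,t_3):=b(t_3)$, constant in the middle variable. Then $\phi=\langle\tilde a,\tilde b\rangle$ with $\norm{\tilde a}_\infty\norm{\tilde b}_\infty=\norm{a}_\infty\norm{b}_\infty$, so Theorem \ref{main} gives $\bignorm{\Gamma^{A,B,B}(\phi)\colon S^2\times S^2\to S^1}\leq\norm{a}_\infty\norm{b}_\infty$, hence the same bound for $\Gamma^{A,B}(\psi)$ on $S^1$. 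Conversely, for $(i)\Rightarrow(ii)$: from boundedness of $\Gamma^{A,B,B}(\phi)$ into $S^1$, Theorem \ref{main} furnishes a separable Hilbert space $H_0$ and functions $a_0\in L^\infty(\lambda_A\times\lambda_B;H_0)$, $b_0\in L^\infty(\lambda_B\times\lambda_B;H_0)$ with $\phi(t_1,t_2,t_3)=\langle a_0(t_1,t_2),b_0(t_2,t_3)\rangle$ a.e., and $\norm{a_0}_\infty\norm{b_0}_\infty$ arbitrarily close to $\bignorm{\Gamma^{A,B,B}(\phi)\colon S^2\times S^2\to S^1}$.

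To eliminate the middle variable I set $M:=\lambda_B(\sigma(B))>0$, let $H:=L^2(\lambda_B;H_0)$ (separable since $H_0$ is), and define
\begin{equation*}
a(t_1):=M^{-1}\,a_0(t_1,\cdotp)\in H,\qquad b(t_3):=b_0(\cdotp,t_3)\in H.
\end{equation*}
By Fubini, $\langle a(t_1),b(t_3)\rangle_H=M^{-1}\int_{\sigma(B)}\psi(t_1,t_3)\,d\lambda_B(t_2)=\psi(t_1,t_3)$ for a.e. $(t_1,t_3)$, while the pointwise estimates $\norm{a(t_1)}_H\leq M^{-1/2}\norm{a_0}_\infty$ and $\norm{b(t_3)}_H\leq M^{1/2}\norm{b_0}_\infty$ give $\norm{a}_\infty\norm{b}_\infty\leq\norm{a_0}_\infty\norm{b_0}_\infty$; passing to the infimum yields both (ii) and the matching norm inequality. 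The only delicate point I anticipate is verifying that $t_1\mapsto a_0(t_1,\cdotp)$ is a well-defined measurable map from $\sigma(A)$ into $L^2(\lambda_B;H_0)$, which is a standard Bochner--Fubini identification given the separability of $H_0$ and the finiteness of $\lambda_B$; the remainder is a direct application of Theorem \ref{main}.
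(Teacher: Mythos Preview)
Your argument is correct, and the overall reduction to Theorem \ref{main} via the identity $\Gamma^{A,B}(\psi)(XY)=\Gamma^{A,\,\cdot\,,B}(\widetilde\psi)(X,Y)$ together with the factorization $S^1=S^2\cdot S^2$ is exactly what the paper does. The difference lies in the choice of the middle operator. The paper inserts an \emph{auxiliary} normal operator $C$ (for instance $C=I$, so that $\sigma(C)$ is a single point and $\lambda_C$ is a point mass) and sets $\widetilde\psi=\psi\otimes 1\in L^\infty(\lambda_A\times\lambda_C\times\lambda_B)$. With $\lambda_C$ trivial, condition (ii) of Theorem \ref{main} for $\widetilde\psi$ is \emph{literally} condition (ii) of Theorem \ref{DOI} for $\psi$, and the norm identity (\ref{AllEqual}) transfers with no further work.

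You instead take the middle operator equal to $B$, so the factorization delivered by Theorem \ref{main} has a genuine dependence on $t_2\in\sigma(B)$ that must be removed. Your averaging trick --- embedding $H_0$ into $H=L^2(\lambda_B;H_0)$ via $a(t_1)=M^{-1}a_0(t_1,\cdotp)$, $b(t_3)=b_0(\cdotp,t_3)$ --- does the job cleanly and preserves the norm product, so the argument closes. The measurability concern you flag is indeed routine: since $\lambda_A,\lambda_B$ are finite, $a_0\in L^\infty(\lambda_A\times\lambda_B;H_0)\subset L^2(\lambda_A\times\lambda_B;H_0)=L^2\bigl(\lambda_A;L^2(\lambda_B;H_0)\bigr)$, giving a measurable (indeed essentially bounded) section $t_1\mapsto a_0(t_1,\cdotp)$. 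The paper's route is shorter because the choice $C=I$ makes this entire paragraph unnecessary; your route is self-contained and has the minor virtue of not introducing an extraneous operator.
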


\begin{proof}
Consider $A,B$ as above and take an auxiliary normal operator $C$ on $\H$ (this may be 
the identity map), with a scalar-valued 
spectral measure $\lambda_C$. For any $\psi\in L^\infty(\lambda_A\times\lambda_B)$, set 
$$
\widetilde{\psi} = 
\psi\otimes 1\in L^\infty(\lambda_A\times\lambda_B)\otimes L^\infty(\lambda_C)\subset 
L^\infty(\lambda_A\times\lambda_C\times\lambda_B).
$$
We claim that for any $X,Y\in S^2(\H)$, 
\begin{equation}\label{TOI-to-DOI}
\Gamma^{A,C,B}(\widetilde{\psi})(X,Y) = \Gamma^{A,B}(\psi)(XY).
\end{equation}
Indeed for any $f_1\in L^\infty(\lambda_A)$ and $f_2\in L^\infty(\lambda_B)$, 
and for any $X,Y\in S^2(\H)$, we have
$$
\Gamma^{A,C,B}(f_1\otimes 1\otimes f_2)(X,Y) = f_1(A)XYf_2(B).
$$
Hence by linearity, (\ref{TOI-to-DOI}) holds true for any 
$\psi\in L^\infty(\lambda_A)\otimes L^\infty(\lambda_B)$. By the $w^*$-continuity
of $\Gamma^{A,C,B}$ and of $\Gamma^{A,B}$, this identity
holds as well for any $\psi\in L^\infty(\lambda_A\times\lambda_B)$.

We have $\norm{XY}_1\leq \norm{X}_2\norm{Y}_2$ for any 
$X,Y\in S^2(\H)$ and conversely, for any $Z\in S^1(\H)$, there
exist $X,Y$ in $S^2(\H)$ such that $XY=Z$ and $\norm{X}_2\norm{Y}_2
=\norm{Z}_1$. Thus given any $\psi\in L^\infty(\lambda_A\times\lambda_B)$,
it follows from (\ref{TOI-to-DOI}) that 
$\Gamma^{A,C,B}(\widetilde{\psi})$ maps 
$S^2(\H)\times S^2(\H)$ into $S^1(\H)$ if and only if 
$\Gamma^{A,B}(\psi)$ maps $S^1(\H)$ into $S^1(\H)$ and moreover,
$$
\bignorm{\Gamma^{A,C,B}(\widetilde{\psi})\colon S^2(\H)\times S^2(\H)
\longrightarrow S^1(\H)}
=\bignorm{\Gamma^{A,B}(\psi)\colon S^1(\H)
\longrightarrow S^1(\H)}.
$$

On the other hand, $\widetilde{\psi}$ satisfies condition (ii) from Theorem \ref{main}
if and only if $\psi$ satisfies condition (ii) from Theorem \ref{DOI}.

The result therefore follows from Theorem \ref{main}.
\end{proof}

\begin{remark}\label{GT} 
In this remark, 
we discuss another formulation of Peller's Theorem. 
Let $E,F$ be Banach spaces. 
A bounded map $u\colon E\to F^*$ is called integral if there 
exist a probability measure space $(\Sigma,\nu)$ and two
bounded maps $\alpha\colon E\to L^\infty(\nu)$
and $\beta\colon F\to L^\infty(\nu)$ such that
\begin{equation}\label{Integral}
\langle u(x),y\rangle = \int_{\Sigma}\bigl[\alpha(x)](\omega)
[\beta(y)](\omega)\,\text{d}\nu(\omega),\qquad x\in E,y\in F.
\end{equation}
Let $\I(E,F^*)$ denote the space of all such operators and set
$I(u)=\inf\{\norm{\alpha}\norm{\beta}\}$, where
the infimum runs over all such factorizations. 
Then $I(\cdotp)$ is a norm on $\I(E,F^*)$ and the latter is a Banach
space. Moreover (\ref{dualproj}) induces an
isometric identification
$$(E \overset{\vee}{\otimes} F)^* = \I(E,F^*),
$$ 
where $\overset{\vee}{\otimes}$ is the injective tensor product.
We refer e.g. to \cite[Chapter VIII, Theorems 5 $\&$ 9]{Diestel}
for these definitions and properties.

Grothendieck's Inequality on tensor products implies that 
for any measure spaces
$(\Omega_1,\mu_1)$ and $(\Omega_2,\mu_2)$, and for any $z\in L^1(\mu_1)\otimes
L^1(\mu_2)$, we have $\norm{z}_\vee\leq \gamma_2^*(z)\leq K\norm{z}_\vee$, where
$K$ is a universal constant (see e.g. \cite[Section 3]{PisierGroth}).
Equivalently, 
$$
L^1(\mu_1)\hat{\otimes}_{\gamma_2^*} L^1(\mu_2)\approx
L^1(\mu_1)\overset{\vee}{\otimes} L^1(\mu_2)
$$
$K$-isomorphically.
Passing to duals, this yields a $w^*$-homeomorphic 
$K$-isomorphism 
\begin{equation}\label{GT-Ineq}
\Gamma_2(L^1(\mu_1),L^\infty(\mu_2))\approx  
\I(L^1(\mu_1),L^\infty(\mu_2)).
\end{equation}

Let $A,B$ as in Theorem \ref{DOI}, let $\psi\in L^\infty(\lambda_A\times
\lambda_B)$ and let $u_\psi\colon L^1(\lambda_A)\to
L^\infty(\lambda_B)$ be the bounded map associated to $\psi$
(see (\ref{IntForm})). Condition (ii) from Theorem \ref{DOI} means
that $u_\psi\in \Gamma_2(L^1(\lambda_A),
L^\infty(\lambda_B))$. 
Hence in Theorem \ref{DOI} above, condition
(i) is also equivalent to :
\begin{itemize}
\item [(iii)] {\it The operator $u_\psi$ belongs to $\I(L^1(\lambda_A),
L^\infty(\lambda_B))$.}
\end{itemize}
Further it is easy to deduce from the above definition 
of integral operators (see (\ref{Integral})) that 
the above property (iii) is formally equivalent
to :
\begin{itemize}
\item [(iv)] {\it There exist a probability measure space $(\Sigma,\nu)$ and two
functions $a\in L^\infty(\lambda_A\times\nu)$ and $b\in L^\infty(\lambda_B\times\nu)$ 
such that
$$
\psi(s,t) = \int_{\Sigma} a(s,\omega)b(t,\omega)\,\text{d}\nu(\omega),
\qquad a.e.\hbox{-}(s,t).
$$}
\end{itemize}
The equivalence between (i) 
and (iv) is stated in \cite{Peller1985, Peller2015}, and also in 
\cite{Hiai,J} to which we refer for various proofs. It follows from this analysis that 
if condition (i) from Theorem \ref{DOI} holds true, then the above factorization 
(iv) can be achieved with $(a,b)$ 
satisfying 
$$
\norm{a}_\infty\norm{b}_\infty\leq K \bignorm{\Gamma^{A,B}(\psi)\colon S^1(\H)\longrightarrow S^1(\H)}.
$$
Conversely if (iv) holds true, then $\norm{\Gamma^{A,B}(\psi)\colon S^1(\H)\longrightarrow S^1(\H)}
\leq\norm{a}_\infty\norm{b}_\infty$.

We note that the original paper \cite{Peller1985} makes use of
Grothendieck's Inequality to establish 
Theorem \ref{DOI}. Our approach shows that this can be avoided
and that Grothendieck' Inequality is useful only to 
establish the equivalence of (iv) with (i).

Let us now come back to Theorem \ref{main}. Let $A,B,C$ and $\phi$
as in this theorem. It follows from the proof of Theorem \ref{main}
that the conditions (i)-(ii) in this theorem are equivalent
to the fact that $\phi \in L^{\infty}_{\sigma}(\lambda_B; 
\Gamma_2(L^1(\lambda_A), L^{\infty}(\lambda_C)))$. Hence according to
(\ref{GT-Ineq}), the conditions 
of Theorem \ref{main} are also equivalent to 
\begin{equation}\label{Main+GT}
\phi \in L^{\infty}_{\sigma}(\lambda_B; 
\I(L^1(\lambda_A), L^{\infty}(\lambda_C))).
\end{equation}
It is a natural question whether this implies the existence of a 
probability measure space $(\Sigma,\nu)$ and two
functions $a\in L^\infty(\lambda_A\times\lambda_B\times\nu)$ and 
$b\in L^\infty(\lambda_B\times\lambda_C\times\nu)$ 
such that $\phi(t_1,t_2,t_3)
= \int_{\Sigma} a(t_1,t_2,\omega)b(t_2,t_3,\omega)\,\text{d}\nu(\omega)$
for a.e. $(t_1,t_2,t_3)$. However we haven't been able to establish this yet.
\end{remark}

We now turn to connections between Theorem \ref{GammaABC} or
Proposition \ref{nintegrals} and the constructions of multiple operator 
integrals from \cite{Peller2006} and \cite{ACDS}.

Let $A_1,\ldots, A_n$ be normal operators on a separable Hilbert space 
$\H$. Throughout we use the notations of Proposition \ref{nintegrals}.
Let $(\Sigma,d\mu)$ be a $\sigma$-finite measure space and, for any $i=1,\ldots,n$,
let 
$$
a_{i}\colon \Sigma\times\sigma(A_i)\longrightarrow\Cdb
$$
be a measurable function such that $a_i(t,\cdotp)\in L^\infty(\lambda_{A_i})$ 
for a.e. $t\in\Sigma$. Then $t\mapsto a_i(t,\cdotp)$ is a $w^*$-measurable function
from $\Sigma$ into $L^\infty(\lambda_{A_i})$, hence $t\mapsto 
\norm{a_i(t,\cdotp)}_{L^\infty(\lambda_{A_i})}$ is measurable for any $i$.
Further by composition (see Remark \ref{WeakTensorisation}), 
$t\mapsto a_i(t,A_i)$ is a $w^*$-measurable function
from $\Sigma$ into $B(\H)$.

\begin{lemma}\label{meas}
Assume that
\begin{equation}\label{assump-P}
\int_{\Sigma}\norm{a_1(t,\cdotp)}_{L^\infty(\lambda_{A_1})}
\norm{a_2(t,\cdotp)}_{L^\infty(\lambda_{A_2})}\cdots
\norm{a_n(t,\cdotp)}_{L^\infty(\lambda_{A_n})}\,\text{d}\mu(t)\,<\infty\,.
\end{equation}
Then for any $X_1,\ldots,X_{n-1}\in S^2(\H)$, the function 
\begin{equation}\label{Fedor}
\Sigma\longrightarrow S^2(\H),\qquad
t\mapsto\, a_1(t,A_1)X_1 a_2(t,A_2)X_2\cdots X_{n-1}a_n(t,A_n),
\end{equation}
is integrable.
\end{lemma}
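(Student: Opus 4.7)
The plan is to verify that $f\colon t\mapsto a_1(t,A_1)X_1a_2(t,A_2)\cdots X_{n-1}a_n(t,A_n)$ is Bochner integrable as an $S^2(\H)$-valued function. This requires a pointwise $S^2$-norm bound dominated by an $L^1(\Sigma)$ function together with strong (i.e.\ Bochner) measurability of $f$.

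For the pointwise bound, I would use that the functional calculus $\pi_{A_i}$ in (\ref{piA}) is a contractive $*$-representation, so $\norm{a_i(t,A_i)}_{B(\H)}\leq\norm{a_i(t,\cdotp)}_{L^\infty(\lambda_{A_i})}$. Applying H\"older's inequality for Schatten classes to the product of the $n$ bounded factors $a_i(t,A_i)$ and the $n-1$ Hilbert--Schmidt factors $X_j$ places $f(t)$ in $S^{2/(n-1)}(\H)\subseteq S^2(\H)$, yielding
\[
\norm{f(t)}_2 \,\leq\, \prod_{i=1}^{n}\norm{a_i(t,\cdotp)}_{\infty}\,\prod_{j=1}^{n-1}\norm{X_j}_2.
\]
By assumption (\ref{assump-P}), the right-hand side is integrable over $\Sigma$.

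For strong measurability, the excerpt already notes that each $t\mapsto a_i(t,A_i)$ is $w^*$-measurable into $B(\H)$. Since $\H$ is separable, this is equivalent to SOT-measurability, hence $t\mapsto a_i(t,A_i)\xi$ is strongly measurable in $\H$ for every $\xi\in\H$ (Pettis). I would then iterate from the right along the product (\ref{Fedor}): multiplication by a fixed bounded operator $X_j$ preserves strong measurability, and applying a SOT-measurable $B(\H)$-valued function to a strongly measurable $\H$-valued function again produces a strongly measurable $\H$-valued function (approximate the vector by simple functions and use pointwise boundedness of the operator factor together with Pettis once more in the separable $\H$). Consequently $t\mapsto f(t)\xi$ is strongly measurable in $\H$ for every $\xi$, and in particular $t\mapsto \langle f(t)\xi,\eta\rangle$ is scalar-measurable for all $\xi,\eta\in\H$.

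To conclude, I would upgrade this to weak measurability of $f$ viewed in $S^2(\H)$. Under trace duality the pairing of $f(t)$ with a rank-one tensor $\overline{\eta}\otimes\xi\in S^2(\H)$ is precisely $\langle f(t)\eta,\xi\rangle$, hence measurable. Combined with the $L^1$ bound on $\norm{f(t)}_2$ and the density of finite-rank operators in $S^2(\H)$, this extends to measurability of $t\mapsto\langle f(t),W\rangle_{S^2}$ for every $W\in S^2(\H)$. Since $S^2(\H)$ is separable, Pettis' theorem promotes weak measurability to strong measurability, and together with the pointwise $L^1$ norm bound this yields Bochner integrability. The main obstacle is precisely this chain-of-operators passage: turning the $w^*$-measurability of each individual factor $a_i(t,A_i)$, interleaved with the fixed Hilbert--Schmidt factors $X_j$, into strong measurability of the whole product in $S^2(\H)$. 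The iteration of SOT-measurability at the vector level followed by Pettis in the separable space $S^2(\H)$ is the key maneuver.
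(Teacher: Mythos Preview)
Your argument is correct. The pointwise $S^2$-norm bound is fine (the detour through $S^{2/(n-1)}$ is unnecessary: $\norm{AXB}_2\leq\norm{A}\,\norm{X}_2\,\norm{B}$ for $A,B\in B(\H)$, iterated, already gives the bound). The measurability chain---SOT-measurability at the vector level, weak measurability against finite-rank operators in $S^2(\H)$, extension by density, then Pettis in the separable space $S^2(\H)$---is valid.

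The paper takes a different route. Rather than work on vectors, it factors $X_1 = X'X''$ with $X',X''\in S^4(\H)$. Right multiplication $B(\H)\to S^4(\H)$ by $X'$ is $w^*$-continuous, so $t\mapsto a_1(t,A_1)X'$ is $w^*$-measurable into $S^4(\H)$; since $S^4(\H)$ is separable and \emph{reflexive}, Pettis immediately upgrades this to strong measurability. Likewise for $t\mapsto X''a_2(t,A_2)$, and the continuous product $S^4\times S^4\to S^2$ then yields strong measurability of $t\mapsto a_1(t,A_1)X_1a_2(t,A_2)$ in $S^2(\H)$ directly; the remaining factors are handled similarly. The $S^4$ trick bypasses your iterated SOT argument and the finite-rank density step entirely. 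Your approach is more elementary in that it never leaves $\H$ and $S^2(\H)$, at the cost of needing the extra (correct) observation that applying an SOT-measurable operator family to a strongly measurable vector function preserves measurability.
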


\begin{proof} 
Fix $X_1,\ldots,X_{n-1}\in S^2(\H)$. 
Write $X_1=X'X''$ with 
$X',X''\in S^4(\H)$, the 4-th order Schatten space on $\H$. By composition, $t\mapsto a_1(t,A_1)X'$
is a $w^*$-measurable function
from $\Sigma$ into $S^4(\H)$. Since $S^4(\H)$ is reflexive and separable,
it follows from \cite[Theorem II.2]{Diestel} that $t\mapsto a_1(t,A_1)X'$
is actually measurable from $\Sigma$ into $S^4(\H)$. 
Likewise $t\mapsto X''a_2(t,A_2)$
is measurable from $\Sigma$ into $S^4(\H)$. Since
$$
a_1(t,A_1)X_1a_2(t,A_2)=(a_1(t,A_1)X')(X''a_2(t,A_2)),
$$ 
it follows that 
$t\mapsto a_1(t,A_1)X_1a_2(t,A_2)$ is measurable from $\Sigma$
into $S^2(\H)$. One proves similarly that
$t\mapsto X_2a_3(t,A_3)\cdots X_{n-1} a_n(t,A_n)$ 
is measurable from $\Sigma$
into $S^2(\H)$. We deduce that the function (\ref{Fedor})
is measurable.

Then the assumption (\ref{assump-P}) ensures that this function
is integrable.
\end{proof}

\begin{proposition}\label{IPTP}
Assume (\ref{assump-P}) and let $\phi\in L^\infty(\lambda_{A_1}\times\cdots\times\lambda_{A_n})$ 
be defined by setting
\begin{equation}\label{phi-P}
\phi(t_1,t_2,\ldots,t_n)= \int_{\Sigma} a_1(t,t_1)a_2(t,t_2)\cdots a_n(t,t_n)\, \text{d}\mu(t)
\end{equation}
for a.e. $(t_1,\ldots,t_n)$ in $\sigma(A_1)\times\cdots\times \sigma(A_n)$. 
Then
\begin{equation}\label{Peller}
\Gamma^{A_1,\ldots,A_n}(\phi)(X_1,\ldots,X_{n-1}) =
\int_{\Sigma} a_1(t,A_1)X_1 a_2(t,A_2)X_2\cdots X_{n-1}a_n(t,A_n)\, \text{d}\mu(t)
\end{equation}
for any $X_1,X_2,\ldots, X_{n-1}$ in $S^2(\H)$.
\end{proposition}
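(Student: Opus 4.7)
The plan is to verify (\ref{Peller}) by pairing both sides with an arbitrary $Z \in S^2(\H)$ via the trace, and to exploit the $w^*$-continuity of $\Gamma^{A_1,\ldots,A_n}$ from Proposition \ref{nintegrals}. First, I would introduce, for each $t \in \Sigma$, the elementary tensor $\phi_t = a_1(t,\cdot) \otimes a_2(t,\cdot) \otimes \cdots \otimes a_n(t,\cdot)$, viewed as an element of $L^\infty(\lambda_{A_1}) \otimes \cdots \otimes L^\infty(\lambda_{A_n}) \subset L^\infty(\lambda_{A_1} \times \cdots \times \lambda_{A_n})$. Proposition \ref{nintegrals} applied to this tensor immediately gives
$$
\Gamma^{A_1,\ldots,A_n}(\phi_t)(X_1,\ldots,X_{n-1}) = a_1(t,A_1) X_1 a_2(t,A_2) \cdots X_{n-1} a_n(t,A_n),
$$
and Lemma \ref{meas} guarantees that the right-hand side of (\ref{Peller}) is a well-defined Bochner integral in $S^2(\H)$.

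Next, I would observe that Fubini's theorem, applied to (\ref{phi-P}) together with the integrability hypothesis (\ref{assump-P}), shows that for every $\psi \in L^1(\lambda_{A_1} \times \cdots \times \lambda_{A_n})$ one has
$$
\int \phi\, \psi\, d(\lambda_{A_1} \times \cdots \times \lambda_{A_n}) = \int_\Sigma \Bigl(\int \phi_t\, \psi\, d(\lambda_{A_1} \times \cdots \times \lambda_{A_n})\Bigr) d\mu(t).
$$
In other words, $\phi$ is the Gelfand ($w^*$-)integral of $t \mapsto \phi_t$ inside $L^\infty(\lambda_{A_1} \times \cdots \times \lambda_{A_n})$.

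I would then fix $X_1,\ldots,X_{n-1}$ and an arbitrary $Z \in S^2(\H)$, and consider the linear functional
$$
\omega_Z : \psi \longmapsto \text{tr}\bigl(\Gamma^{A_1,\ldots,A_n}(\psi)(X_1,\ldots,X_{n-1})\, Z\bigr)
$$
on $L^\infty(\lambda_{A_1} \times \cdots \times \lambda_{A_n})$. Using the identification of $B_{n-1}(S^2(\H) \times \cdots \times S^2(\H), S^2(\H))$ as the dual of the appropriate projective tensor product of copies of $S^2(\H)$, as in Section \ref{triple}, the $w^*$-continuity of $\Gamma^{A_1,\ldots,A_n}$ asserts precisely that $\omega_Z$ is $w^*$-continuous on $L^\infty$, hence is represented by an element of $L^1(\lambda_{A_1} \times \cdots \times \lambda_{A_n})$. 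Applying $\omega_Z$ to the Gelfand-integral representation of $\phi$ then yields
$$
\omega_Z(\phi) = \int_\Sigma \omega_Z(\phi_t)\, d\mu(t) = \int_\Sigma \text{tr}\bigl(a_1(t,A_1) X_1 \cdots X_{n-1} a_n(t,A_n)\, Z\bigr) d\mu(t).
$$

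To conclude, I would pull the bounded functional $T \mapsto \text{tr}(TZ)$ through the Bochner integral provided by Lemma \ref{meas}, obtaining
$$
\omega_Z(\phi) = \text{tr}\Bigl(\Bigl[\int_\Sigma a_1(t,A_1) X_1 \cdots X_{n-1} a_n(t,A_n)\, d\mu(t)\Bigr] Z\Bigr).
$$
Since $Z$ is arbitrary in $S^2(\H)$, the isometric trace duality (\ref{dualS2}) forces (\ref{Peller}). The only conceptual step is the realization of $\phi$ as the Gelfand integral of its slices $\phi_t$, which is essentially just Fubini; beyond that, the argument is a direct splicing of the $w^*$-continuity of $\Gamma^{A_1,\ldots,A_n}$ with the Bochner integrability from Lemma \ref{meas}, so I do not anticipate any genuine obstacle.
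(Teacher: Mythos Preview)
Your proposal is correct and follows essentially the same approach as the paper's proof: both introduce the elementary tensors $\phi_t$ (the paper writes $\widetilde{\phi}(t)$), use Fubini to recognize $\phi$ as their $w^*$-integral over $\Sigma$, invoke the $w^*$-continuity of $\Gamma^{A_1,\ldots,A_n}$ to pass the trace pairing with a fixed $Z\in S^2(\H)$ through this integral, and conclude by duality. The only cosmetic difference is that the paper names the representing $L^1$-function $\varphi$ explicitly rather than working with the functional $\omega_Z$, but the argument is the same.
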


\begin{proof}
We introduce $\widetilde{a_i}\colon \Sigma\to L^\infty(\lambda_{A_i})$ 
by writing $\widetilde{a_i}(t) = a_i(t,\cdotp)$
for any $i=1,\ldots,n$.
Then the function $\widetilde{\phi}\colon \Sigma\to 
L^\infty(\lambda_{A_1}\times\cdots\times\lambda_{A_n})$ defined by 
$$
\widetilde{\phi}(t) = \widetilde{a_1}(t)\otimes \widetilde{a_2}(t)\otimes\cdots
\otimes \widetilde{a_n}(t),\qquad t\in\Sigma,
$$
is $w^*$-measurable. 
Let $\varphi\in L^1(\lambda_{A_1}\times\cdots\times\lambda_{A_n})$. Then for a.e. $t\in\Sigma$, we have
$$
\langle \widetilde{\phi}(t) ,\varphi\rangle =\int a_1(t,t_1)
\cdots a_n(t,t_n)\varphi(t_1,\ldots,t_n)\,\text{d}\lambda_{A_1}(t_1)\cdots 
\text{d}\lambda_{A_n}(t_n)\,.
$$
Hence by  Fubini's Theorem,
$$
\langle\phi,\varphi\rangle =\int_{\Sigma}  \langle \widetilde{\phi}(t) ,\varphi\rangle \,
\text{d}\mu(t)\,.
$$
Fix $X_1,X_2,\ldots, X_{n}$ in $S^2(\H)$. Write $\Gamma=\Gamma^{A_1,\ldots, A_n}$ for convenience. Since
this mapping is 
$w^*$-continuous, there exists a necessarily unique $\varphi\in L^1(\lambda_{A_1}\times\cdots\times\lambda_{A_n})$
such that for any $\psi \in L^\infty(\lambda_{A_1}\times\cdots\times\lambda_{A_n})$, we have
$$
\bigl\langle \Gamma(\psi)(X_1,\ldots,X_{n-1}), X_n\bigr\rangle = \langle\psi,\varphi\rangle.
$$
We shall apply this identity with $\psi=\phi$ first, and then with $\psi=\widetilde{\phi}(t)$. Then we obtain
\begin{align*}
\bigl\langle \Gamma(\phi)(X_1,\ldots,X_{n-1}), X_n\bigr\rangle 
& = \langle\phi,\varphi\rangle\\
& = \int_{\Sigma}  \langle \widetilde{\phi}(t) ,\varphi \rangle \,
\text{d}\mu(t)\\
& = \int_{\Sigma}  \bigl\langle\Gamma(\widetilde{\phi}(t))(X_1,\ldots,X_{n-1}), X_n\bigr\rangle \,
\text{d}\mu(t)\,.
\end{align*}
By the definition 
of $\Gamma$ on elementary tensor products, we have
$$
\Gamma(\widetilde{\phi}(t))(X_1,\ldots,X_{n-1}) = 
a_1(t,A_1)X_1a_2(t,A_2)X_2\cdots X_{n-1}a_n(t,A_n)
$$
for a.e. $t\in\Sigma$. Consequently 
$$
\bigl\langle \Gamma(\phi)(X_1,\ldots,X_{n-1}), X_n\bigr\rangle 
= 
\int_{\Sigma} \bigl\langle a_1(t,A_1)X_1 a_2(t,A_2)X_2\cdots X_{n-1}a_n(t,A_n), X_n
\bigr\rangle \,\text{d}\mu(t)\,.
$$
This shows (\ref{Peller}).
\end{proof}

Following \cite{Peller2006}, the space of all functions 
$\phi\in L^\infty(\lambda_{A_1}\times\cdots\times\lambda_{A_n})$ 
defined  by (\ref{phi-P}) for some $a_1,\ldots,a_n$ satisfying (\ref{assump-P}) is called
the integral projective tensor product of the spaces 
$L^\infty(\lambda_{A_1}),\ldots,L^\infty(\lambda_{A_n})$; this space is denoted by
$$
L^\infty(\lambda_{A_1})\hat{\otimes}_i\cdots\hat{\otimes}_i L^\infty(\lambda_{A_n}).
$$

In \cite{ACDS, Peller2006} the authors define a multiple operator integral mapping
$$
T_\phi\colon B(\H)\times\cdots\times B(\H)\longrightarrow B(\H)
$$
for any $\phi\in L^\infty(\lambda_{A_1})\hat{\otimes}_i\cdots\hat{\otimes}_i L^\infty(\lambda_{A_n})$,
as follows. Let $\phi$ 
be defined by (\ref{phi-P}) for some $a_1,\ldots, a_n$ satisfying (\ref{assump-P}). Then
for any $X_1,\ldots, X_{n-1}$ in $B(\H)$, the operator $T_\phi(X_1,\ldots, X_{n-1})$ is defined by setting
\begin{equation}\label{T-phi}
\text{tr}\bigl(T_\phi(X_1,\ldots, X_{n-1}) Z\bigr)
=\int_\Sigma\text{tr}\bigl(a_1(t,A_1)X_1 a_2(t,A_2)X_2\cdots X_{n-1}a_n(t,A_n) Z\bigr)\, \text{d}\mu(t)
\end{equation}
for any $Z\in S^1(\H)$. Indeed it follows from \cite[Section 4]{ACDS} 
that for any $X_1,\ldots, X_{n-1}$ in $B(\H)$,
the function 
$$
\Sigma\longrightarrow B(\H),\qquad
t\mapsto\, a_1(t,A_1)X_1 a_2(t,A_2)X_2\cdots X_{n-1}a_n(t,A_n),
$$
belongs to $L^1_\sigma(\Sigma; B(\H))$, and hence $t\mapsto \text{tr}
\bigl(a_1(t,A_1)X_1 a_2(t,A_2)X_2\cdots X_{n-1}a_n(t,A_n) Z\bigr)$
is integrable for any $Z\in S^1(\H)$.

Proposition \ref{IPTP} shows that the constructions from the present paper
are compatible with those from \cite{ACDS, Peller2006}.
Namely for any $\phi$ in the integral projective tensor product, the restriction of $T_\phi$
to $S^2(\H)\times\cdots\times S^2(\H)$ coincides with $\Gamma^{A_1,\ldots, A_n}(\phi)$.

We observe that for any $\phi\in 
L^\infty(\lambda_{A_1})\hat{\otimes}_i\cdots\hat{\otimes}_i L^\infty(\lambda_{A_n})$,
the $(n-1)$-linear bounded operator $T_\phi$ is separately $w^*$-continuous. That is, 
for any $1\leq k\leq n-1$ and for any 
$X_1,\ldots, X_{k-1}$, $X_{k+1},\ldots, X_{n-1}$ in $B(\H)$, the linear map
from $B(\H)$ into itself taking any $X_k\in B(\H)$ to
$T_\phi(X_1,\ldots, X_{n-1})$ is $w^*$-continuous. Let us show this for $k=1$,
the other cases being similar. We consider $\phi$ given by (\ref{phi-P}).
We fix $X_2,\ldots, X_{n-1}$ in $B(\H)$ and $Z\in S^1(H)$. We
let $\eta\colon B(\H)\to\Cdb$ be defined by 
$$
\eta(X)= \text{tr}\bigl(T_\phi(X,X_2\ldots, X_{n-1}) Z\bigr),\qquad X\in B(\H),
$$
and we aim at showing that the functional $\eta$ is $w^*$-continuous.
For we consider $\Theta\colon \Sigma\to S^1(\H)$ defined by setting
$$
\Theta(t) = a_2(t,A_2)X_2\cdots X_{n-1}a_n(t, A_n)Z a_1(t,A_1)
$$
for a.e. $t\in\Sigma$.  Arguing as in the proof of Lemma \ref{meas}, one shows that $\Theta$
is measurable and hence that $\Theta$ is integrable. Then it follows from 
(\ref{T-phi}) that for any $X\in B(\H)$, we have
$$
\eta(X) = \text{tr}\Bigl(X\,\int_\Sigma \Theta(t)\,\text{d}\mu(t)\,\Bigr).
$$
This shows that $\eta$ is $w^*$-continuous and concludes the proof.

This leads to the following.

\begin{corollary} For any $\phi$
in the space $L^\infty(\lambda_{A_1})
\hat{\otimes}_i\cdots\hat{\otimes}_i L^\infty(\lambda_{A_n})$, the 
$(n-1)$-linear map $\Gamma^{A_1,\ldots, A_N}(\phi)\colon S^2(\H)\times\cdots\times
S^2(\H)\to S^2(\H)$ 
extends to a (necessarily unique) separately $w^*$-continuous bounded $(n-1)$-linear map
$B(\H)\times\cdots\times B(\H)\longrightarrow B(\H)$.
\end{corollary}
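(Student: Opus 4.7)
The plan is to take the candidate extension to be the $(n-1)$-linear map $T_\phi$ already defined in \cite{ACDS, Peller2006} by formula (\ref{T-phi}), and verify in turn: (a) it is bounded on $B(\H)\times\cdots\times B(\H)$ into $B(\H)$; (b) it extends $\Gamma^{A_1,\ldots,A_n}(\phi)$; (c) it is separately $w^*$-continuous; (d) any other such extension must coincide with it. Points (a) and (c) are essentially settled in the paragraph preceding the corollary, so the real content lies in (b) and in a clean formulation of the uniqueness (d).

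For boundedness, I would observe that for $X_1,\ldots,X_{n-1}\in B(\H)$ and $Z\in S^1(\H)$, the definition (\ref{T-phi}) combined with the submultiplicativity of the operator norm and $|\text{tr}(YZ)|\leq\|Y\|\|Z\|_1$ yields the estimate
$$\bigl|\text{tr}\bigl(T_\phi(X_1,\ldots,X_{n-1})Z\bigr)\bigr|\,\leq\,\|Z\|_1\|X_1\|\cdots\|X_{n-1}\|\int_{\Sigma}\prod_{i=1}^n\|a_i(t,\cdotp)\|_{L^\infty(\lambda_{A_i})}\,d\mu(t),$$
which by trace duality (\ref{Dual-S1}) shows that $T_\phi$ is bounded with norm controlled by the integral in (\ref{assump-P}). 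Property (c) is exactly what is verified in the paragraph just above the corollary statement, so I simply invoke it.

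For (b), I would apply Proposition \ref{IPTP}: for $X_i\in S^2(\H)$, the element $\Gamma^{A_1,\ldots,A_n}(\phi)(X_1,\ldots,X_{n-1})$ equals the Bochner integral $\int_\Sigma a_1(t,A_1)X_1\cdots X_{n-1}a_n(t,A_n)\,d\mu(t)$ in $S^2(\H)$. Pairing both sides with an arbitrary $Z\in S^1(\H)$, the continuous linear functional $\text{tr}(\,\cdot\,Z)$ may be brought inside the Bochner integral, and the resulting scalar integral coincides precisely with the right-hand side of (\ref{T-phi}). Since $S^1(\H)$ separates points of $B(\H)$ via trace duality, this forces $T_\phi(X_1,\ldots,X_{n-1})=\Gamma^{A_1,\ldots,A_n}(\phi)(X_1,\ldots,X_{n-1})$ as elements of $B(\H)$, proving the extension property. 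As a by-product, this also gives the well-definedness of $T_\phi$ independently of the choice of representation (\ref{phi-P}) for $\phi$, since the left-hand side does not depend on that choice.

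Finally, for uniqueness (d), I would use that $S^2(\H)$ contains all finite rank operators, and that finite-rank operators are $w^*$-dense in $B(\H)$; hence $S^2(\H)$ is itself $w^*$-dense in $B(\H)$, and by Kaplansky-type truncation any element of $B(\H)$ is the $w^*$-limit of a bounded net in $S^2(\H)$. If $\widetilde T,\widetilde T'$ are two separately $w^*$-continuous bounded extensions, an induction on the number of coordinates — at each step approximating one $X_k\in B(\H)$ by a bounded $w^*$-convergent net in $S^2(\H)$ while the remaining coordinates stay fixed — shows $\widetilde T=\widetilde T'$ everywhere. No step here is a serious obstacle: the whole statement is essentially a packaging of Proposition \ref{IPTP} and the separate $w^*$-continuity argument that immediately precedes it, together with the elementary density remark.
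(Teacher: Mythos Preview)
Your proposal is correct and matches the paper's approach exactly: the paper does not give a separate proof of the corollary but writes ``This leads to the following'' after establishing precisely the ingredients you list --- boundedness of $T_\phi$, its coincidence with $\Gamma^{A_1,\ldots,A_n}(\phi)$ on $S^2(\H)^{n-1}$ via Proposition \ref{IPTP}, and its separate $w^*$-continuity --- with uniqueness left implicit via the $w^*$-density of $S^2(\H)$ in $B(\H)$. One small remark: your by-product observation about well-definedness of $T_\phi$ is valid, but note that it relies on (c) and (d) as well as (b), not on (b) alone, since (b) only pins down $T_\phi$ on $S^2(\H)^{n-1}$.
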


In the case $n=2$, $L^\infty(\lambda_{A})\hat{\otimes}_i L^\infty(\lambda_{B})$
coincides with the space all functions in $L^\infty(\lambda_A\times \lambda_B)$
satisfying condition (iv) from Remark \ref{GT}. Equivalently, we have
$$
\I(L^1(\lambda_{A}), L^\infty(\lambda_{B})) \simeq  
L^\infty(\lambda_{A})\hat{\otimes}_i L^\infty(\lambda_{B}).
$$
The inclusion `$\subset$' is obvious. The non trivial reverse inclusion
is a well-known fact which follows from \cite[Chapter VII, Theorem 9]{Diestel}.
According to this result (see the beginning of Remark \ref{GT}), it suffices to show
that any $\phi\in L^\infty(\lambda_{A})\hat{\otimes}_i L^\infty(\lambda_{B})$
induces a bounded functional on the injective tensor product
$L^1(\lambda_{A})\overset{\vee}{\otimes}  L^1(\lambda_{B})$. To check this
property, consider 
$$
\phi(t_1,t_2)= \int_\Sigma a(t,t_1)b(t,t_2)\,\text{d}\mu(t)
$$
for some measurable functions $a\colon\Sigma\times\sigma(A)\to\Cdb$ and
$b\colon\Sigma\times\sigma(B)\to\Cdb$ 
such that 
$$
K = \int_\Sigma \norm{a(t,\cdotp)}_\infty\norm{b(t,\cdotp)}_\infty
\, \text{d}\mu(t)\,<\infty\,.
$$ 
Then for any finite families
$(f_k)_k$ in $L^1(\lambda_A)$ and $(g_k)_k$ in $L^1(\lambda_B)$, we have
\begin{align*}
\Bigl\vert\Bigl\langle \phi,\sum_k f_k\otimes g_k\Bigr\rangle\Bigr\vert
& \leq \int_\Sigma \Bigl\vert\sum_k\langle a(t,\cdotp),f_k\rangle \langle b(t,\cdotp),g_k\rangle 
\Bigr\vert\, \text{d}\mu(t)\\
&\leq \, K\,\Bignorm{\sum_k f_k\otimes g_k}_{L^1\overset{\vee}{\otimes} L^1},
\end{align*}
which proves the result.

We finally turn to the case $n=3$. Consider three normal operators $A,B,C$ on $\H$.
It is clear that for any $\phi\in L^\infty(\lambda_A)\hat{\otimes}_i L^\infty(\lambda_B)
\hat{\otimes}_i L^\infty(\lambda_C)$, $\Gamma^{A,B,C}(\phi)$ 
extends to a bounded bilinear map $S^2(\H)\times S^2(\H)\to S^1(\H)$. Indeed assume that
$$
\phi(t_1,t_2,t_3)= \int_\Sigma a(t,t_1)b(t,t_2)c(t,t_3)\,\text{d}\mu(t)
$$
for some measurable functions $a\colon\Sigma\times\sigma(A)\to\Cdb$,
$b\colon\Sigma\times\sigma(B)\to\Cdb$ and $c\colon\Sigma\times\sigma(C)\to\Cdb$
such that 
$$
K = \int_\Sigma \norm{a(t,\cdotp)}_\infty\norm{b(t,\cdotp)}_\infty
\norm{c(t,\cdotp)}_\infty\, \text{d}\mu(t)\,<\infty\,.
$$ 
Then for any
$X,Y$ in $S^2(\H)$,
$$
\int_\Sigma\bignorm{a(t,A)Xb(t,B)Yc(t,C)}_{1}\, \text{d}\mu(t)\ \leq
K\norm{X}_2\norm{Y}_2.
$$
Hence by Proposition \ref{IPTP}, $\Gamma^{A,B,C}(\phi)(X,Y)$ belongs to $S^1(\H)$ and
we have 
$$
\norm{\Gamma^{A,B,C}(\phi)(X,Y)}_1\leq K\norm{X}_2\norm{Y}_2,\qquad 
X,Y \in S^2(\H).
$$
Example \ref{Ex1} below shows that the converse is wrong, that is, there 
exist functions $\phi\in L^\infty(\lambda_A\times\lambda_B\times\lambda_C)$
such that $\Gamma^{A,B,C}(\phi)\colon S^2(\H)\times S^2(H)\to S^1(\H)$ 
although $\phi$ does not belong to $L^\infty(\lambda_A)\hat{\otimes}_i L^\infty(\lambda_B)
\hat{\otimes}_i L^\infty(\lambda_C)$. (There are actually a lot of
such functions.)

\begin{example}\label{Ex1} 
In this paragraph, and in Example \ref{Ex2} below, we 
consider families $M=\{m_{ikj}\}_{i,k,j\geq 1}$ in $\ell^{\infty}(\Ndb^3)$ 
to which we associate the bilinear Schur multiplier 
$B_M\colon S^2\times S^2\to S^2$ defined by 
$$
B_M(X,Y)= \Bigl[\sum_{k\geq 1} m_{ikj} x_{ik} y_{kj}\Bigr]_{i,j\geq 1},
\qquad X=[x_{ij}]_{i,j\geq 1},\ Y=[y_{ij}]_{i,j\geq 1} \,\in S^2.
$$
Bilinear maps $B_M$ are special cases of the bilinear maps $\Lambda(\phi)$
and $\Gamma(\phi)$ considered in subsections \ref{Functions} and \ref{Operators}.

Let $S=\{s_{kj}\}_{k,j\geq 1} \in \ell^\infty(\Ndb^2)$ and let 
$L_S\colon S^2\to S^2$ be  the associated
Schur multiplier defined by 
$$
L_S\bigl([y_{kj}]_{k,j\geq 1}\bigr)
= \bigl([s_{kj} y_{kj}]_{k,j\geq 1}\bigr),\qquad 
[y_{kj}]_{k,j\geq 1}\in S^2.
$$
Set $m_{ikj}=s_{kj}$ for any $i,j,k\geq 1$. It follows from
the above definitions that for any $X,Y\in S^2$,
$$
B_M(X,Y) = X L_S(Y).
$$
Since $\norm{L_S(Y)}_2\leq \norm{S}_\infty\norm{Y}_2$,
this implies that $B_M\colon S^2\times S^2\to S^1$ boundedly.
The above formula also implies that $B_M$ extends to a bounded bilinear map
$B(\ell^2)\times B(\ell^2)\to B(\ell^2)$ if and only if
$L_S$ extends to a bounded map $B(\ell^2)\to B(\ell^2)$.
This holds true if and only if $S\in \ell^\infty\hat{\otimes}_i\ell^\infty$.
Thus whenever $S\in \ell^\infty(\Ndb^2)\setminus \ell^\infty\hat{\otimes}_i\ell^\infty$,
$B_M$ is bounded from $S^2\times S^2$ into 
$S^1$ but $B_M$ is not bounded from $B(\ell^2)\times B(\ell^2)$
into $B(\ell^2)$. In this case, $M$ cannot belong to 
$\ell^\infty\hat{\otimes}_i\ell^\infty\hat{\otimes}_i\ell^\infty$.
\end{example}

\begin{example}\label{Ex2}
To complement the above discussion, 
let us show the existence of (plenty of) families
$M\in \ell^{\infty}(\Ndb^3)$ such that 
\begin{itemize}
\item [(i)] $B_M$ extends to a bounded bilinear map $B(\ell^2)\times B(\ell^2)\to B(\ell^2)$;
\item [(ii)] $B_M$ does not extend to a bounded bilinear map $S^2\times S^2\to S^1$.
\end{itemize}

Let $S=\{s_{ij}\}_{i,j\geq 1} \in \ell^\infty(\Ndb^2)$.
Set $m_{i1j} = s_{ij}$ for any $i,j\geq 1$ and,
for any $k\geq 2$, set $m_{ikj}=0$ for any $i,j\geq 1$. 
For any 
$X=[x_{ij}]_{i,j\geq 1}$ and 
$Y=[y_{ij}]_{i,j\geq 1}$ in $S^2$, we have
$$
B_M(X,Y) =\bigl[s_{ij} x_{i1}y_{1j}\bigr]_{i,j\geq 1}.
$$

For any finite families $(\alpha_j)_{j\geq 1}$ and 
$(\beta_i)_{i\geq 1}$ of complex numbers, we have
\begin{align*}
\Bigl\vert \sum_{i,j\geq 1} s_{ij} x_{i1}y_{1j}\alpha_j\beta_i\Bigr\vert
& \leq \norm{S}_\infty \sum_{i,j\geq 1} \vert x_{i1} y_{1j}\alpha_j\beta_i\vert \\
& \leq \norm{S}_\infty \Bigl(\sum_{i\geq 1}\vert x_{i1}\vert^2 \Bigr)^{\frac12}
\Bigl(\sum_{i\geq 1}\vert \alpha_{i}\vert^2\Bigr)^{\frac12}
\Bigl(\sum_{j\geq 1}\vert y_{1j}\vert^2\Bigr)^{\frac12}
\Bigl(\sum_{j\geq 1}\vert \beta_{j}\vert^2\Bigr)^{\frac12}\\
&\leq \norm{S}_\infty \norm{X}_{B(\ell^2)}\norm{Y}_{B(\ell^2)}
\Bigl(\sum_{i\geq 1}\vert \alpha_{i}\vert^2\Bigr)^{\frac12}
\Bigl(\sum_{j\geq 1}\vert \beta_{j}\vert^2\Bigr)^{\frac12},
\end{align*}
by the Cauchy-Schwarz inequality. This shows that $B_M$
satisfies (i).

We now claim that if $B_M$ extends to a bounded bilinear map $S^2\times S^2\to S^1$,
then $L_S$ extends to a bounded map $B(\ell^2)\to B(\ell^2)$. Indeed suppose that
$$
K = \bignorm{B_M\colon S^2\times S^2\longrightarrow S^1}\,<\infty.
$$ 
Consider a 
finite matrix $[z_{ij}]_{i,j\geq 1}$ and finite families
$(\alpha_i)_{i\geq 1}$ and 
$(\beta_j)_{j\geq 1}$ of complex numbers.
Let $X=[x_{ij}]_{i,j\geq 1}$ be defined
by setting $x_{i1}=\alpha_i$ for any $i\geq 1$
and, for any $j\geq 2$, $x_{ij}=0$ for any $i\geq 1$.
Likewise, let $Y=[y_{ij}]_{i,j\geq 1}$
be defined
by setting $y_{1j}=\beta_j$ for any $j\geq 1$
and, for any $i\geq 2$, $y_{ij}=0$ for any $j\geq 1$.
Then
$$
\norm{X}_{2} = \Bigl(\sum_{j\geq 1}\vert \beta_{j}\vert^2\Bigr)^{\frac12}
\qquad\hbox{and}\qquad
\norm{Y}_{2} = \Bigl(\sum_{i\geq 1}\vert \alpha_{i}\vert^2\Bigr)^{\frac12}.
$$
Hence
$$
\bigl\vert \text{tr}\bigl(B_M(X,Y)Z\bigr)\bigr\vert\,\leq\, K\norm{Z}_{B(\ell^2)}
\Bigl(\sum_{j\geq 1}\vert \beta_{j}\vert^2\Bigr)^{\frac12}
\Bigl(\sum_{i\geq 1}\vert \alpha_{i}\vert^2\Bigr)^{\frac12}.
$$
Since 
$$
\text{tr}\bigl(B_M(X,Y)Z\bigr) = \sum_{i,j\geq 1} s_{ij} x_{i1}y_{1j} z_{ji}
= \sum_{i,j\geq 1} s_{ij} \alpha_{i} \beta_{j} z_{ji},
$$
this implies that
$$
\Bigl\vert \sum_{i,j\geq 1} s_{ij} z_{ji}\alpha_{i} \beta_{j} 
\Bigr\vert\,\leq\, K\norm{Z}_{B(\ell^2)}
\Bigl(\sum_{j\geq 1}\vert \beta_{j}\vert^2\Bigr)^{\frac12}
\Bigl(\sum_{i\geq 1}\vert \alpha_{i}\vert^2\Bigr)^{\frac12}.
$$
This implies that $L_S$ extends to a bounded map $B(\ell^2)\to B(\ell^2)$ and proves the claim.

Thus for any $S\in \ell^\infty(\Ndb^2)\setminus \ell^\infty\hat{\otimes}_i\ell^\infty$,
the associated family $M$ satisfies (ii).
\end{example}

We finally refer to \cite{ER0,JTT} for the study of multilinear measurable
Schur multipliers which extend to completely bounded maps
$B(L^2)\times\cdots\times B(L^2)\to B(L^2)$.

\vskip 1cm
\noindent
{\bf Added, April 2020:} After a first version of this paper was circulated 
in 2017, some of its results have been used in \cite{C, C1, CLSS, LS, ST}.

\bigskip
\noindent
{\bf Acknowledgements.} 
The first two authors were supported by the French 
``Investissements d'Avenir" program, 
project ISITE-BFC (contract ANR-15-IDEX-03). The third
author was partially supported by the ARC (grant FL170100052 ).

\vskip 1cm


\begin{thebibliography}{10}


\bibitem{ANP1} A. Aleksandrov, F. Nazarov, V. Peller,
{\it Triple operator integrals in Schatten-von Neumann norms 
and functions of perturbed noncommuting operators}, 
C. R. Math. Acad. Sci. Paris 353 (2015), no. 8, 723-728. 


\bibitem{ANP2} A. Aleksandrov, F. Nazarov, V. Peller,
{\it Functions of noncommuting self-adjoint operators under 
perturbation and estimates of triple operator integrals},  
Adv. Math. 295 (2016), 1-52. 

\bibitem{AP}
A. Aleksandrov,  V. Peller,
{\it Multiple operator integrals, Haagerup and Haagerup-like tensor products, and operator ideals},
Bull. Lond. Math. Soc. 49 (2017), no. 3, 463-479.

\bibitem{ACDS} N. Azamov, A. Carey, P. Dodds, F. Sukochev,
{\it Operator integrals, spectral shift, and spectral flow,}
Canad. J. Math. {\bf 61} (2009), No. 2, 241-263.

\bibitem{BS1} M. Birman, M. Solomyak, {\it Double Stieljes operator integrals}
(Russian), Prob. Math. Phys., Izdat. Leningrad Univ. 1 (1966), 33-67.

\bibitem{BS2} M. Birman, M. Solomyak, {\it Double Stieljes operator integrals II}
(Russian), Prob. Math. Phys., Izdat. Leningrad Univ. 2 (1967), 26-60.

\bibitem{BS3} M. Birman, M. Solomyak, {\it Double Stieljes operator integrals III}
(Russian), Prob. Math. Phys., Izdat. Leningrad Univ. 6 (1973), 27-53.

\bibitem{BS4} M. Birman, M. Solomyak, 
{\it Double operator integrals in a Hilbert space}, Integr. Equ. Oper. Theory 47 (2003), 131-168.


\bibitem{C} C. Coine, {\it Perturbation theory and 
higher order $S^p$-differentiability of operator functions}, Preprint 2019 
(arXiv:1906.05585).


\bibitem{C1} C. Coine, {\it 
Complete boundedness of multiple operator integrals }, Preprint 2019 
(arXiv:1908.07879).


\bibitem{CLPST1} C. Coine, C. Le Merdy, D. Potapov, F. Sukochev, A. Tomskova, 
{\it Resolution of Peller's problem concerning Koplienko-Neidhardt trace formulae},
Proc. Lond. Math. Soc. (3) 113 (2016), no. 2, 113-139. 

\bibitem{CLPST2} C. Coine, C. Le Merdy, D. Potapov, F. Sukochev, A. Tomskova, 
{\it Peller's problem concerning Koplienko-Neidhardt trace formulae: the unitary case},
J. Funct. Anal. 271 (2016), no. 7, 1747-1763. 


\bibitem{CLSS} C. Coine, C. Le Merdy, A. Skripka, F. Sukochev,
{\it Higher order $S^2$-differentiability and application to Koplienko trace formula}, 
J. Funct. Anal. 276 (2019), no. 10, 3170-3204. 


\bibitem{Conway} J. Conway, {\it A Course in Operator Theory}, 
Graduate Studies in Mathematics, Vol. 21. American Mathematical Society, 2000.


\bibitem{DK} Yu. L. Daletskii, S.G. Kreĭn, 
{\it Integration and differentiation of functions of Hermitian operators and applications 
to the theory of perturbations} (Russian), Voronež. 
Gos. Univ. Trudy Sem. Funkcional. Anal. (1956), no. 1, 81-105.



\bibitem{Davidson} K. Davidson, {\it $C^*$-algebras by example},
Fields Institute Monograph 6, American Mathematical Society, 1996.

\bibitem{DeP-Suk}  B. De Pagter, F. Sukochev, 
{\it Differentiation of operator functions in non-commutative Lp-spaces}, 
J. Funct. Anal. 212 (2004), no. 1, 28-75. 

\bibitem{Diestel} J. Diestel, J. J. Uhl, {\it Vector Measures},
Mathematical Surveys 15, American Mathematical Society, 1979.

\bibitem{DiestelJar} J. Diestel, H. Jarchow, A. Tonge, {\it Absolutely Summing Operators},
Cambridge studies in advanced mathematics 43, Cambridge University Press, 1995.

\bibitem{DunPet} N. Dunford, B. Pettis, 
{\it Linear operator on summable functions}, Trans. Amer. Math. Soc. 47 
(1940), 323-392.


\bibitem{ER0} E. Effros, Z.-J. Ruan, 
{\it Multivariable multipliers for groups and their operator algebras}, 
in ``Operator theory: operator algebras and applications, Part 1 (Durham, NH, 1988)", pp. 
197-218, Proc. Sympos. Pure Math., 51, Part 1, Amer. Math. Soc., Providence, RI, 1990. 



\bibitem{Ha} U. Haagerup, {\it Decomposition of completely bounded maps on operator algebras},
Unpublished, Odense University, Denmark, 1980.


\bibitem{Hiai} F. Hiai, H. Kosaki, {\it Means of
Hilbert Space Operators}, 
Lecture Notes in Mathematics, Springer, 2003.


\bibitem{J} B. Jefferies, {\it Singular bilinear integrals in quantum physics}, 
Mathematics 3 (2015),
563-603.


\bibitem{JTT}
K. Juschenko, I. Todorov, L. Turowska, 
{\it Multidimensional operator multipliers},
Trans. Amer. Math. Soc. 361 (2009), no. 9, 4683-4720.



\bibitem{LS} C. Le Merdy, A. Skripka, {\it 
Higher order differentiability of operator functions in Schatten norms},
J. Inst. Math. Jussieu, to appear (arXiv:1901.05586).




\bibitem{Pav} B. Pavlov,  
{\it Multidimensional operator integrals}
(Russian), Problems of Math. Anal., No. 2: 
Linear Operators and Operator Equations (Russian),
(1969) 99-122.
 
\bibitem{Ped} G. Pedersen, {\it $C^*$-algebras and their automorphism groups},
London Mathematical Society Monographs, 14. Academic Press, London-New York, 
1979.


\bibitem{Peller1985}  
V. Peller, {\it Hankel operators in the theory of perturbations of 
unitary and selfadjoint operators} (Russian), 
Funktsional. Anal. i Prilozhen. 19 (1985), no. 2, 37-51. 



\bibitem{Peller2006}
V. Peller, {\it Multiple operator integrals and higher
operator derivatives}, J. Funct. Anal. 233 (2006), no. 2,
515-544. 


\bibitem{Peller2015}  V. Peller, 
{\it Multiple operator integrals in perturbation theory}, 
Bull. Math. Sci. 6 (2016), no. 1, 15-88. 


\bibitem{PisierCBMS}  G. Pisier,
{\it Factorization of linear operators and geometry of Banach spaces}, 
CBMS Regional Conference Series in Mathematics 60, 1986, x+154 pp. 

\bibitem{PisierBook} G. Pisier, {\it Similarity problems and completely bounded maps},
Lecture Notes in Mathematics, 1618. Springer-Verlag, Berlin, 1996.

\bibitem{PisierGroth} G. Pisier, {\it Grothendieck's theorem, past and present},
Bull. Amer. Math. Soc. (N.S.) 49 (2012), no. 2, 237-323.

\bibitem{PSS-SSF}
D. Potapov, A. Skripka, F. Sukochev, {\it Spectral shift
function of higher order},  Invent. Math. 193 (2013), no.
3, 501-538.

\bibitem{PSST} 
D. Potapov, A. Skripka, and F. Sukochev, A. Tomskova, 
{\it Multilinear Schur multipliers and applications to operator
Taylor remainders}, Adv. Math. 320 (2017), 1063-1098. 

\bibitem{RS}  M. Reed, B. Simon, {\it Methods of modern mathematical physics. 
I. Functional analysis}, 
Second edition. Academic Press, New York, 1980, xv+400 pp. 

\bibitem{Rudin} W. Rudin, {\it Functional analysis} 
McGraw-Hill Series in Higher Mathematics. McGraw-Hill Book Co., 
New York-Düsseldorf-Johannesburg, 1973. xiii+397 pp.

\bibitem{Sakai} S. Sakai, {\it $C^*$-Algebras and $W^*$-Algebras}, 
Springer, 1998.
Ergebnisse der Mathematik und ihrer Grenzgebiete, Band 60. 
Springer-Verlag, New York-Heidelberg, 1971. xii+253 pp. 


\bibitem{ST} A. Skripka, A. Tomskova, 
{\it Multilinear operator integrals. Theory and applications},
Lecture Notes in Mathematics 2250, Springer, 2019, xi+190 pp. 


\bibitem{Spronk} N. Spronk,
{\it Measurable Schur multipliers and completely bounded multipliers of the Fourier algebras} 
Proc. London Math. Soc. (3) 89 (2004), no. 1, 161-192.

\end{thebibliography}
\end{document}